\date{}
\newlength{\defbaselineskip}
\long\def\salta#1{\relax}
\theoremstyle{plain}
\newtheorem{theorem}{Theorem}[section]
\newtheorem{proposition}[theorem]{Proposition}
\newtheorem{lemma}[theorem]{Lemma}
\newtheorem{corollary}[theorem]{Corollary}
\theoremstyle{definition}
\newtheorem{definition}[theorem]{Definition}
\newtheorem{remark}[theorem]{Remark}
\theoremstyle{remark}
\newcommand{\na}{\mathbb{N}}
\newcommand{\re}{\mathbb{R}}
\definecolor{jose}{rgb}{0.3,0.4,0.8}
\def\dys{\displaystyle}
\def\t1p0{T^{1,p}_{0}(\Omega)}
\def\m2{M^{\frac{N(p-1)}{N-1}}(\Omega)}
\def\w-1p'{W^{-1,p'}(\Omega)}
\def\pw-1p'u{L^{p'}(0,1;W^{-1,p'}(\Omega))}
\def\dys{\displaystyle}
\def\lp'n{(L^{p'}(\Omega))^{N}}
\def\supp{\text{\rm{supp}}}
\numberwithin{equation}{section}
\title[A break point for the multiplicity of solutions]{A singularity as a break point for the multiplicity of solutions to quasilinear elliptic problems}
\keywords{Nonlinear elliptic equations, Singular gradient terms, Multiplicity of solutions, Uniqueness of solution.\\
\indent 2010 {\it Mathematics Subject Classification.} 35A01, 35A02, 35J25, 35J62, 35J75.
\\
\indent Research supported by MINECO-FEDER grant MTM2015-68210-P, Junta de Andaluc\'ia FQM-116 and Programa de Contratos Predoctorales del Plan Propio de la Universidad de Granada.
}
\begin{document}

\maketitle

\begin{center}
{\small SALVADOR L\'OPEZ-MART\'INEZ\\[0.4 cm] Departamento de An\'alisis Matem\'atico\\ Universidad de Granada\\ Facultad de Ciencias, Avenida Fuentenueva s/n, 18071, Granada, Spain\\[0.4 cm]
\textsl{Email adress: salvadorlopez@ugr.es}}
\end{center}

\hrule

\begin{abstract}
In this paper we deal with the elliptic problem
\begin{equation*}
\begin{cases}
\dys-\Delta u=\lambda u+\mu(x)\frac{|\nabla u|^q}{u^\alpha}+f(x)\quad &\text{ in }\Omega,
\\
u>0 \quad &\text{ in }\Omega,
\\
u=0\quad &\text{ on }\partial\Omega,
\end{cases}
\end{equation*}
where $\Omega\subset\mathbb{R}^N$ is a bounded smooth domain, $0\lneqq\mu\in L^\infty(\Omega)$, $0\lneqq f\in L^{p_0}(\Omega)$ for some $p_0>\frac{N}{2}$, $1<q<2$, $\alpha\in [0,1]$ and $\lambda\in\re$. We establish existence and multiplicity results for $\lambda>0$ and $\alpha<q-1$, including the non-singular case $\alpha=0$. In contrast, we also derive existence and uniqueness results for $\lambda>0$ and $q-1<\alpha\leq 1$. We thus complement the results in \cite{CLM, CLLM}, which are concerned with $\alpha=q-1$, and show that the value $\alpha=q-1$ plays the role of a break point for the multiplicity/uniqueness of solution.
\end{abstract}

\hrule

\section{Introduction}\label{sec:introduction}

In this paper we deal with the following boundary value problem:
\begin{equation}\label{problem}
\begin{cases}
\dys-\Delta u=\lambda u+\mu(x)\frac{|\nabla u|^q}{u^\alpha}+f(x)\quad &\text{ in }\Omega,
\\
u>0 \quad &\text{ in }\Omega,
\\
u=0\quad &\text{ on }\partial\Omega.
\end{cases}
\tag{$P_\lambda$}
\end{equation}
Here, $\Omega$ is a bounded domain of $\re^N$ ($N\geq 3$) with  boundary $\partial\Omega$ smooth enough, $0~\lneqq~\mu~\in~ L^\infty(\Omega)$, $0\lneqq f\in L^{p_0}(\Omega)$ for some $p_0>\frac{N}{2}$, $1<q<2$, $0\leq\alpha\leq 1$ and $\lambda\in\re$. A solution to \eqref{problem} is a function $0<u\in H_0^1(\Omega)\cap L^\infty(\Omega)$ which satisfies the equation in \eqref{problem} in the usual weak sense (we will be more precise about the concept of solution in Definition \ref{solution} below). Observe that, if $\alpha>0$, then the lower order term presents a singularity as $u$ approaches zero, i.e., as $x$ approaches $\partial\Omega$. Our goal is to study the existence, nonexistence, uniqueness and multiplicity of solutions to \eqref{problem}, specially for $\lambda>0$. 

\smallskip

The first motivation for dealing with this problem comes from the non-singular case $\alpha=0$, i.e.,
\begin{equation}\label{problemns}
\begin{cases}
\dys-\Delta u=\lambda u+\mu(x)|\nabla u|^q+f(x)\quad &\text{ in }\Omega,
\\
u>0 \quad &\text{ in }\Omega,
\\
u=0\quad &\text{ on }\partial\Omega.
\end{cases}
\tag{$R_\lambda$}
\end{equation}
It is well-known from classical results (see \cite{BMP1,BMP2}) that problem \eqref{problemns} admits at least one solution for all $\lambda<0$. Concerning the uniqueness of solution, it was first dealt with in \cite{BarlesMurat}, and their results have been improved in several directions since then (see \cite{ACJT2} and references therein). In particular, it has been recently proved in \cite{ACJT2} that uniqueness holds for all $\lambda\leq 0$. However, the existence of solution for $\lambda=0$ is not always guaranteed. Roughly speaking, if $\|f\|_{L^{p_0}(\Omega)}$ is small enough, then there exists a unique solution to $(R_0)$, as it is shown for instance in \cite{FPR} (see also \cite{GMP} and references therein). Conversely, it is proved in \cite{AP} (see also \cite{HMV}) that, if $f$ is large in some sense, there exists no solution to $(R_0)$; in consequence, $\lambda=0$ is a bifurcation point from infinity. Concerning this last case, a very precise description of the blow-up of the solutions at $\lambda=0$, and also a necessary and sufficient condition for the existence of solution to $(R_0)$ in terms of the corresponding ergodic problem, are given in \cite{P} under slightly stronger hypotheses on $f$ and $\mu$. 

\smallskip

The scenario in which $(R_0)$ has a solution is not so well understood, and has risen interest in the recent years. In this case one expects to find solutions to \eqref{problemns} for small $\lambda>0$ by a continuation argument. However, the uniqueness and multiplicity problems are harder to deal with for $\lambda>0$, and very few results are known in this direction. In fact, up to our knowledge, the literature contains results concerning only the quadratic case $q=2$. In this regard, the first advances can be found in \cite{JS} for $\mu>0$ constant. Shortly after that, some improvements appeared in \cite{JQ}, where $\lambda=\lambda(x)$ is allowed to change sign but $\mu$ is still constant. These two works employ variational techniques. Going further, topological degree and bifurcation are used in \cite{ACJT1} to handle problem \eqref{problemns} with $\lambda>0$ and $\mu\in L^\infty(\Omega)$ such that $\mu_1\leq\mu\leq\mu_2$ for some constants $\mu_2>\mu_1>0$. We also quote \cite{Souplet}, where functions $0\lneqq\mu\in L^\infty(\Omega)$ vanishing on $\partial\Omega$, and even with compact support, are permitted at the expense of imposing $N\leq 3$ (the cases $N=4, 5$ are also handled provided $\lambda=\lambda(x)$ satisfies extra hypotheses). Very recently, a similar problem to \eqref{problemns} with the $p$ -Laplacian as principal operator has been considered in \cite{CF}, while sign-changing coefficients (including $\mu$) are allowed in \cite{CFJ}. 

\smallskip

In all these works, the authors prove that, if there is a solution to $(R_0)$, then problem \eqref{problemns} admits at least two different solutions for all $\lambda>0$ small enough, and it was first shown in \cite{ACJT1} that the branch of positive solutions bifurcates from infinity to the right of the axis $\lambda=0$ (see \cite{CJ} for a more complete picture when different sign conditions on $f$ are imposed). We stress again that all the mentioned papers have in common the assumption $q=2$. Indeed, the techniques employed for $q=2$ usually involve exponential test functions which somehow remove the dependence on the gradient in the equation. For instance, this idea allows the authors of \cite{JS} to study the problem variationally, while in \cite{ACJT1} it is essential in order to find a priori estimates for $\lambda>0$. However, this idea fails for $1<q<2$ as the gradient term can not be removed when one looks for a priori estimates satisfied by supersolutions to \eqref{problemns}. Up to our knowledge, the multiplicity or uniqueness of solutions for $\lambda>0$ is an open problem if $1<q<2$. 

\smallskip

Turning back to \eqref{problem}, another motivation for studying this problem comes from the very recent paper \cite{CLLM}. In Remark 6.1 of that paper the authors observe that, if $q=2$ and $0<\alpha<q-1$, the techniques in \cite{ACJT1} can be adapted to derive again a multiplicity result for $\lambda>0$. Hence, roughly speaking, mild singularities at zero do not alter the behavior of the solutions, as far as the multiplicity for $\lambda>0$ is concerned. Nonetheless, the main result in that paper shows that multiplicity fails for $\alpha=q-1$ (see \cite{AM} for $q=2$ and $\mu$ constant). To be precise, the authors prove under natural hypotheses on $\mu$ and $f$ that, if $\alpha=q-1$, there exists $\lambda^*\in (0,\lambda_1]$ (where $\lambda_1=\inf_{v\in H_0^1(\Omega)\setminus\{0\}}\int_\Omega|\nabla v|^2/\int_\Omega v^2$) such that problem \eqref{problem} has a solution if and only if $\lambda<\lambda^*$, and in this case, the solution is unique (see also \cite{CLM} for a similar existence result when $f$ and $u$ may change sign). In particular, one has existence and uniqueness for $\lambda>0$ small. Since this result is true for $1<q\leq 2$, it is natural to wonder whether $\alpha=q-1$ is a break point for the multiplicity of solutions not only in the case $q=2$, but also for $1<q<2$. 

\smallskip

In the present work we contribute to these topics by proving that, if there is a solution to $(P_0)$, then there are at least two different solutions to \eqref{problem} for all $\lambda>0$ small enough provided $q$ and $\alpha$ satisfy certain relations involving also the dimension $N$. We prove also that the branch of positive solutions bifurcates from infinity to the right of the axis $\lambda=0$. 

\smallskip

To be more precise, we consider the following set of hypotheses:
\begin{equation}\tag{H1}\label{H1}
\left\{
\begin{array}{l}
\Omega\subset\re^N\text{ is a bounded domain of class }\mathcal{C}^2,
\\
\mu\in L^\infty(\Omega)\text{ satisfies that } \mu\geq\mu_0 \text{ in } \Omega\text{ for some constant }\mu_0>0,
\\
0\lneqq f\in L^{p_0}(\Omega)\text{ for some }p_0>\frac{N}{2},
\\
q\in (1,2), 
\\
\alpha\in [0,q-1).
\end{array}
\right.
\end{equation}
Observe that $\mu$ is bounded away from zero but not necessarily constant. We introduce here the main result of this paper:

\begin{theorem}\label{maintheorem}
Assume that \eqref{H1} holds and that $(P_0)$ admits a solution $u_0$. If $q>\frac{N}{N-1}$, suppose also that 
\begin{equation}\label{mrcondition}
\frac{q-1-\alpha}{q-2\alpha}\leq \frac{q-\alpha}{N-q+1}.
\end{equation}
Furthermore, if $q\geq 1+\frac{2}{N}$, assume additionally that 
\begin{equation}\label{qlargecondition}
\alpha\leq \frac{q(N+4)-2(N+1)}{N+2}.
\end{equation}
Then, there exists $\bar{\lambda}\in (0,\lambda_1)$ such that problem \eqref{problem} admits at least two different solutions for all $\lambda\in (0,\bar{\lambda}]$. Moreover, zero is the unique bifurcation point from infinity to problem \eqref{problem}.
\end{theorem}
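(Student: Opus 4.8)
The plan is to obtain the second solution by a bifurcation-from-infinity argument in the spirit of \cite{ACJT1}, suitably modified to handle the subquadratic gradient term $1<q<2$ and the mild singularity $\alpha<q-1$. First I would set up the functional framework: for $\lambda$ in a right neighbourhood of $0$, construct an ordered pair of sub- and supersolutions. The solution $u_0$ of $(P_0)$ will serve (after a small perturbation, using $\lambda u_0\ge 0$) as a subsolution of $(P_\lambda)$ for $\lambda>0$, while a supersolution must be built by a separate device — typically a large multiple of the torsion function or a solution of an auxiliary problem with the gradient term absorbed. This yields, by a standard sub-/supersolution scheme for singular quasilinear equations (Schauder plus truncation, or monotone iteration in the weak-solution class of Definition \ref{solution}), a \emph{minimal} solution $\underline{u}_\lambda$ lying between $u_0$ and the supersolution, and one checks $\underline{u}_\lambda\to u_0$ as $\lambda\to 0^+$. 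This is the ``first'' branch.

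Next I would produce a \emph{second}, larger solution. The key is an a priori estimate: any solution $u$ of $(P_\lambda)$ with $\lambda>0$ satisfies a bound of the form $\|u\|_{H_0^1\cap L^\infty}\le C(\lambda)$ with $C(\lambda)\to\infty$ only as $\lambda\to\lambda_1^-$, together with a quantitative lower bound forcing any solution other than the minimal one to be ``large'' — i.e.\ for small $\lambda$ there is a gap between $\underline{u}_\lambda$ and any other solution. The conditions \eqref{mrcondition} and \eqref{qlargecondition} are precisely what is needed here: they govern the interplay between the scaling of $|\nabla u|^q/u^\alpha$ and the available Sobolev/Stampacchia regularity when $q>\tfrac{N}{N-1}$ (so the gradient term is not automatically in $L^1$) and when $q\ge 1+\tfrac2N$ (so that the term is not automatically in the dual of $H_0^1$); under these constraints one can still absorb the lower-order term and close the estimate for supersolutions. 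With the a priori bound in hand, I would run a topological-degree computation: define a compact fixed-point operator $T_\lambda$ on a suitable cone in $C^1$ or $L^\infty$ whose fixed points are the solutions of $(P_\lambda)$, show the degree in a large ball equals $0$ (using the a priori bound together with the non-existence of solutions for $\lambda\ge\lambda_1$, or a deformation pushing solutions to infinity), show the local degree at the minimal solution $\underline{u}_\lambda$ equals $1$ (stability/strict sub-supersolution ordering), and conclude by excision that there is another fixed point. Choosing $\bar\lambda<\lambda_1$ small enough that all these pieces hold simultaneously gives two solutions on $(0,\bar\lambda]$.

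Finally, the bifurcation-from-infinity statement: I would show that for $\lambda\le 0$ there is at most one solution and for $\lambda<0$ or $\lambda=0$ the set of solutions stays bounded (this is the easy direction, essentially the existence/uniqueness theory for $\lambda\le 0$ carried over from the non-singular case and the earlier results for $\alpha=q-1$), whereas along any sequence $\lambda_n\downarrow 0$ one can exhibit solutions $u_n$ (the second ones) with $\|u_n\|\to\infty$; by the a priori estimate no blow-up can occur at any $\lambda_0>0$, so $\lambda=0$ is the only bifurcation point from infinity. The main obstacle I anticipate is the a priori estimate for supersolutions when $1<q<2$: unlike the quadratic case one cannot linearise via an exponential (Hopf--Cole) change of variable to kill the gradient dependence, so one must instead use a combination of Stampacchia-type truncations, the regularity $f\in L^{p_0}$ with $p_0>N/2$, and interpolation between the gradient and the solution controlled exactly by \eqref{mrcondition}--\eqref{qlargecondition}; making this estimate uniform as $\lambda\to 0^+$ while still explosive as $\lambda\to\lambda_1^-$ is the delicate technical heart of the argument, and it is presumably where hypotheses \eqref{H1}, \eqref{mrcondition} and \eqref{qlargecondition} are used in an essential and non-removable way.
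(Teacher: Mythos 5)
Your overall strategy---uniform a priori bounds plus topological degree---is indeed the right one, and you correctly identify that the estimates behind \eqref{mrcondition}--\eqref{qlargecondition} are the technical heart (they control the bootstrap that upgrades the $L^p$ bound of Lemma~\ref{lebesguelemma} to an $L^\infty$ bound in Proposition~\ref{mainestimate}). But the multiplicity mechanism you propose differs from the paper's, and one of your steps has a genuine obstruction.

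The paper does \emph{not} build a minimal solution by sub/supersolutions and then compute a local degree at it. Instead, using Proposition~\ref{prop:continuum} (a Rabinowitz-type global continuation from the unique solution $(0,u_0)$, where the index is nonzero), it produces an unbounded connected set $\Sigma^+\subset[0,\lambda_1)\times L^\infty(\Omega)$ of solutions containing $(0,u_0)$. Since Proposition~\ref{mainestimate} bounds every solution uniformly for $\lambda\ge\lambda_0>0$, and no solution exists for $\lambda\ge\lambda_1$, the continuum can only be unbounded by blowing up as $\lambda\to0^+$. Two solutions for small $\lambda>0$ then follow from a purely topological argument: assuming uniqueness along a sequence $\mu_n\downarrow0$ forces those unique solutions to converge to $u_0$; connectedness of $\Sigma^+$ then produces an intermediate branch at a fixed $L^\infty$-level $\eta>\|u_0\|_{L^\infty}$, and passing to the limit yields a second solution of $(P_0)$, contradicting its uniqueness (Theorem~\ref{comprinc1}). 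No supersolution, minimal solution, or local index at an interior solution is ever needed.

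The concrete gap in your plan is the supersolution. Since $\alpha<q-1$, the lower-order term is \emph{superlinearly homogeneous}: $|\nabla(tu)|^q/(tu)^\alpha=t^{q-\alpha}|\nabla u|^q/u^\alpha$ with $q-\alpha>1$. A large multiple $M\phi$ of the torsion function (or of any fixed profile) fails the supersolution inequality for large $M$, because the right-hand side grows like $M^{q-\alpha}$ while the left grows linearly in $M$; and for small $M$ one cannot dominate $f$. This is exactly the same obstruction that makes existence for $\lambda>0$ nontrivial, so the sub/super device does not give you the ``first'' branch for free, and consequently the local degree $=1$ computation at a putative minimal solution is not available either. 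The continuum argument is precisely the device the paper uses to circumvent this. If you want to keep a two-degree framework, you would have to replace the sub/super step by, e.g., an implicit-function or continuation argument from $\lambda=0$ to get the first solution, which brings you back essentially to the continuum picture.

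Two smaller remarks. First, your description of \eqref{mrcondition} and \eqref{qlargecondition} as reflecting ``the gradient term is not automatically in $L^1$'' or ``not in the dual of $H_0^1$'' is not quite how they arise: \eqref{mrcondition} is the requirement that the exponent $m=\frac{(q-\alpha)N}{N-q+1}$ coming from the Souplet-type weighted estimates be at least $\frac{N(q-1)}{q}$ so that the bootstrap of \cite{GMP} (or its singular analogues, Propositions~\ref{estimatethm1}--\ref{estimateprop3}) can start, while \eqref{qlargecondition} ensures $\min\{\alpha,a(q)\}=\alpha$ in Proposition~\ref{estimatethm1}, i.e.\ that the admissible range of $\theta$ actually reaches $\alpha$. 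Second, the uniqueness of $u_0$ that the contradiction step ultimately uses is the comparison Theorem~\ref{comprinc1}, which is valid only for $\lambda\le0$; this is exactly why the argument must land at $\lambda=0$ and not merely at a small positive $\lambda$.
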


Even though this result deals only with the range $\lambda>0$, in order to make a more complete picture we will gather and prove in Section \ref{sec:superlinear} some existence, nonexistence and uniqueness results about problem \eqref{problem} for $\lambda\leq 0$. We stress that the uniqueness result for $\lambda\leq 0$, apart from being new in the literature, shows that $\lambda=0$ is a critical point beyond which the nature of the problem changes drastically, as in the well-known case $q=2$ and $\alpha=0$.

\smallskip

Concerning the proof of Theorem \ref{maintheorem}, the idea is to derive a priori estimates of the solutions to \eqref{problem} for all $\lambda>\lambda_0$ which are independent of $\lambda>0$. This idea first appeared in \cite{ACJT1} for $q=2$ and $\alpha=0$, but the approach for deriving the estimates does not work in our framework. For our purposes, it is more convenient to use the arguments developed in \cite{Souplet}, which allow us to find $L^p$ estimates of supersolutions. After that, we establish a bootstrap argument, which works thanks to some results in \cite{GMP}, that yields an $L^\infty$ estimate. Actually, these results are valid only in the nonsingular case $\alpha=0$, so we will extend some parts of them to our singular framework.

\smallskip

Hypotheses \eqref{mrcondition} and \eqref{qlargecondition} in Theorem \ref{maintheorem} deserve some comments. They appear in the proof as a result of the combination of the mentioned techniques from \cite{Souplet} and the bootstrap from \cite{GMP}. However, we presume that these are technical assumptions forced by the tools we employed, so the theorem might admit some improvements. In order to clarify the meaning of these two conditions, we derive some corollaries below in which simpler conditions assuring \eqref{mrcondition} are imposed. For instance, if we consider the sequence
\begin{equation}\label{Qn}
Q_n=\left\{
\begin{array}{ll}
\displaystyle 2&\forall n\leq 4,
\\
\displaystyle\frac{n+2-\sqrt{n^2-4n-4}}{4}&\forall n\geq 5,
\end{array}
\right.
\end{equation} 
then $q\in (1,Q_N]\setminus\{2\}$ implies \eqref{mrcondition}, with no extra hypotheses on $\alpha$ apart from $0\leq\alpha<q-1$ (see Corollary \ref{corollary1}). Observe that  
$Q_n>1$ but $\lim_{n\to\infty}Q_n=1$. This means that, if $N$ is large, then $q$ has to be chosen close to $1$. However, one would expect a multiplicity result for any $q\in (1,2)$ and any $N$. This still remains as an open problem. In any case, Corollary \ref{corollary1} represents a remarkable advance, in particular, about the nonsingular problem \eqref{problemns}. Changing the point of view, we give in Corollaries \ref{corollary2} and \ref{corollary3} below conditions on $\alpha$ and $N$ that are sufficient for applying Theorem \ref{maintheorem} even for $q$ close to $2$. 

\smallskip

With the aim of having a deeper insight into problem \eqref{problem}, we also consider in this work the case $q-1<\alpha\leq 1$. In contrast to the previous situation ($0\leq \alpha<q-1$), we will prove that existence and \emph{uniqueness} hold for $\lambda>0$ small enough. For this purpose, we will need the following assumption on $\Omega$:
\begin{equation}\tag{A}\label{condA}
\left\{
\begin{array}{l}
\text{There exist }r_0, \theta_0 > 0 \text{ such that, if } x\in \partial \Omega\text{  and }0 < r < r_0, \text{ then}
\\
|\Omega_r| \leq (1 - \theta_0)|B_r(x)|\text{ for every connected component }\Omega_r\text{ of }\Omega\cap B_r(x).
\end{array}
\right.
\end{equation}
Note that, if $\partial\Omega$ is Lipschitz, then $\Omega$ satisfies \eqref{condA} (see \cite{ACJT2}), so this represents only a mild restriction. The precise hypotheses that we need are gathered here:
\begin{equation}\tag{H2}\label{H2}
\left\{
\begin{array}{l}
\Omega\subset\re^N\text{ is a bounded domain satisfying condition }\eqref{condA},
\\
0\lneqq\mu\in L^\infty(\Omega),
\\
0\lneqq f\in L^{p_0}(\Omega)\text{ for }p_0>\frac{N}{2}, 
\\
q\in (1,2),
\\
q-1<\alpha\leq 1.
\end{array}
\right.
\end{equation}
We emphasize that $\mu$ is allowed to vanish in subsets of $\Omega$ with nonzero measure. 

\smallskip

The statement of the main result in the $q-1<\alpha\leq 1$ case is the following:
\begin{theorem}\label{sublinearthm}
Assume that \eqref{H2} holds. Then there exists a solution to \eqref{problem} for all $\lambda<\lambda_1$, and there exists no solution to \eqref{problem} for all $\lambda\geq\lambda_1$. Moreover, the solution is unique for all $\lambda\leq 0$ and, if $f$ satisfies that
\[\forall\omega\subset\subset\Omega\quad\exists c_\omega>0:\quad f\geq c_\omega\quad\text{ in }\omega,\]
then the solution is unique for all $\lambda<\lambda_1$. Finally, $\lambda_1$ is the unique bifurcation point from infinity to problem \eqref{problem}.
\end{theorem}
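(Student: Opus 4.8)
The plan is to split the statement into four independent parts and treat them with the usual machinery of sub/supersolutions, a priori estimates, and comparison principles, now adapted to the singular term $|\nabla u|^q/u^\alpha$ with $q-1<\alpha\le 1$. First, for the \textbf{existence for $\lambda<\lambda_1$}: I would build a solution by approximation, replacing the singular nonlinearity by $\mu(x)|\nabla u|^q/(u+\tfrac1n)^\alpha$ and solving the truncated problems (e.g. by Schauder after an $L^\infty$ a priori bound). The key point is a uniform-in-$n$ estimate: test with $u_n$ itself, and absorb the gradient term using the condition $\alpha>q-1$, which is exactly what makes $|\nabla u|^q/u^\alpha$ subcritical with respect to the $H^1_0$ energy near the boundary (formally, $\int \mu|\nabla u_n|^q u_n^{1-\alpha} \le \varepsilon\int|\nabla u_n|^2 + C_\varepsilon$ since $1-\alpha\ge 0$ and $q<2$, using Young and the boundedness that follows from $\lambda<\lambda_1$ comparison with the linear problem). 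A lower barrier of the form $c\,\mathrm{dist}(x,\partial\Omega)$, obtained because $f\gneqq 0$ and $-\Delta$ has a positive first eigenfunction, shows $u_n$ stays positive and the singular term passes to the limit; Fatou/strong $H^1_0$ convergence closes the argument. For \textbf{nonexistence when $\lambda\ge\lambda_1$}: test the equation (in the sense of Definition \ref{solution}) against the first eigenfunction $\varphi_1>0$; since $\mu,f\ge0$ the singular term is nonnegative, one gets $\lambda_1\int u\varphi_1 = \int u(-\Delta\varphi_1) \ge \lambda\int u\varphi_1 + (\text{positive})$, forcing $\lambda<\lambda_1$ as soon as $\int u\varphi_1>0$, which holds because $u>0$.

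For the \textbf{uniqueness when $\lambda\le 0$}, I would use the change of unknown that is classical for these gradient problems: the map $u\mapsto \Psi(u)$ with $\Psi'(s)=\exp\big(\tfrac{\mu_?}{\text{?}}\cdots\big)$ does not directly linearize things here because $\mu$ is not constant, so instead I would argue by the comparison/convexity method of \cite{BarlesMurat, ACJT2}. Given two solutions $u_1,u_2$, set $w=u_1-u_2$ and test the difference of the equations with a suitable truncation of $w^+$; the term $\lambda(u_1-u_2)w^+\le 0$ helps, and the difference of the singular gradient terms is handled by the monotonicity of $s\mapsto s^{-\alpha}$ together with a Lipschitz estimate on $\xi\mapsto|\xi|^q$ on bounded sets, using that both solutions are in $L^\infty$ and bounded below by a multiple of the distance function (so $u_i^{-\alpha}$ is controlled where it matters). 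This gives $w^+\equiv 0$, and symmetrically $w^-\equiv 0$.

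For the \textbf{uniqueness up to $\lambda_1$ under $f\ge c_\omega$ on compact subsets}: the extra lower bound on $f$ upgrades the barrier to $u\ge c\,\mathrm{dist}(x,\partial\Omega)$ with a quantitative $c$ on each $\omega\subset\subset\Omega$, which is precisely what is needed to run the previous comparison argument when $\lambda>0$ — now one does not have the sign of $\lambda w$, but the strong positivity lets one use a weighted test function (dividing by a power of $u$) so that the $\lambda$-term is absorbed by the strict gap $\lambda_1-\lambda>0$ via Poincaré; this is the step I expect to be most delicate, since it requires the singular term to produce a favorable sign after the weighted testing, and it is where condition \eqref{condA} enters (it guarantees, via \cite{ACJT2}, the global Hopf-type lower bound $u\gtrsim\mathrm{dist}(x,\partial\Omega)$ without Lipschitz regularity of $\partial\Omega$). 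Finally, \textbf{$\lambda_1$ is the unique bifurcation point from infinity}: combine the a priori bound valid on any interval $[\lambda_0,\lambda_1-\delta]$ (from the existence estimates above, which are uniform on such intervals) to rule out blow-up away from $\lambda_1$, and show solutions do blow up as $\lambda\uparrow\lambda_1$ by testing against $\varphi_1$ again — if $u_\lambda$ stayed bounded one could pass to the limit and obtain a solution at $\lambda=\lambda_1$, contradicting the nonexistence part. The main obstacle, as noted, is the weighted comparison argument for $0<\lambda<\lambda_1$: making the singular gradient term cooperate rather than obstruct requires choosing the test weight as a power of $u$ tuned to $\alpha$ and $q$, and checking the resulting inequality closes exactly because $\alpha>q-1$.
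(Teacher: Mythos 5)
Your overall architecture — approximate, get a priori bounds, pass to the limit, then invoke comparison principles for uniqueness, and couple the nonexistence at $\lambda_1$ with the a priori bounds for the bifurcation statement — matches the paper's. But two of the ingredients are replaced by mechanisms that do not actually work, and these are precisely the places where the proof has content.

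First, the $L^\infty$ bound on the approximating solutions. You assert it follows from ``comparison with the linear problem'' because $\lambda<\lambda_1$. This cannot be right as stated: the term $\mu(x)\,T_n(|\nabla u_n|^q)/(|u_n|+1/n)^\alpha$ is not bounded above by any function of $u_n$ alone, so no linear supersolution dominates it. The $H_0^1$ bound you get by testing with $u_n$ is fine (and is Step~1 of Proposition~\ref{modelestimates}), but it does not propagate to $L^\infty$ by comparison. What the paper actually does (Step~2 of Proposition~\ref{modelestimates}) is a normalization/blow-up argument: assume $\|u_n\|_{L^\infty(\Omega)}\to\infty$, set $v_n=u_n/\|u_n\|_{L^\infty(\Omega)}$, use that $\alpha\le 1$ and the $H_0^1$ bound to obtain uniform $C^{0,\eta}(\overline\Omega)$ estimates for $v_n$ (via \cite{LU}), extract a uniform limit $v$ with $\|v\|_{L^\infty(\Omega)}=1$, show the singular gradient term vanishes in the limit after dividing by $\|u_n\|_{L^\infty(\Omega)}^{\alpha}$, and land on $-\Delta v=\lambda v$, $v>0$, $v=0$ on $\partial\Omega$, which contradicts $\lambda<\lambda_1$. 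The H\"older estimate is not a cosmetic detail: it is exactly what gives compactness in $L^\infty(\Omega)$ and what makes the comparison principles of Section~\ref{sec:comparison} applicable (both require $u,v\in C(\Omega)\cap W^{1,N}_{\mathrm{loc}}(\Omega)$). This same normalization argument is also what proves that $\lambda_1$ is the only possible bifurcation point from infinity, so the gap affects that part too.

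Second, the uniqueness for $0<\lambda<\lambda_1$. You propose to absorb the $\lambda$-term ``by the strict gap $\lambda_1-\lambda>0$ via Poincar\'e'' after testing with a power of $u$. This is not the mechanism, and I do not see how it could be: the test functions in Theorem~\ref{comprinc2} are $(w_\varepsilon-k)^+/u$ and $(w_\varepsilon-k)^+/(v+\varepsilon)$ with $w_\varepsilon=\log(u/(v+\varepsilon))$, supported on the level set $A_k=\{u\ge e^k(v+\varepsilon)\}$, and these are not Rayleigh quotients for $-\Delta$ on $\Omega$; Poincar\'e with constant $\lambda_1$ is not available for them. In fact the comparison principle holds for \emph{every} $\lambda\in\mathbb{R}$, with no reference to $\lambda_1$: after subtracting, the $\lambda$-contributions cancel up to the residual term $\lambda\varepsilon\int(w_\varepsilon-k)^+/(v+\varepsilon)$, and this residual is absorbed by the \emph{datum} term, using precisely the hypothesis $f\ge c_\omega$ on $\omega\subset\subset\Omega$ to get $h\bigl(\tfrac1u-\tfrac1{v+\varepsilon}\bigr)+\tfrac{\lambda\varepsilon}{v+\varepsilon}\le0$ on $A_k$ for $\varepsilon$ small (this is inequality \eqref{hineq}). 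The restriction to $\lambda<\lambda_1$ in the statement comes only from the nonexistence part, not from the comparison argument. Relatedly, your reading of condition \eqref{condA} as a Hopf-type global lower bound is off: it is a measure-theoretic non-degeneracy of $\partial\Omega$ used to guarantee that the level sets $A_k$ are compactly contained in $\Omega$, not a barrier estimate. The structural condition that makes the whole comparison close is, as you eventually note, that $s\mapsto s^{q-1}g(x,s)=\mu(x)s^{q-1-\alpha}$ is nonincreasing iff $\alpha\ge q-1$; that is the right observation, but the absorption of the $\lambda$-term is by the datum, not by the spectral gap.

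The other pieces (nonexistence via $\varphi_1$, uniqueness for $\lambda\le0$ via a Stampacchia-type comparison, and the bifurcation argument by ruling out bounded branches at $\lambda_1$) are consistent with what the paper does, although for $\lambda\le0$ the paper appeals to Theorem~\ref{comprinc2} rather than building the argument from scratch, and the global lower barrier $u\gtrsim\delta$ you invoke there is not needed — continuity and positivity on compact subsets, together with the boundary condition on $w^+$, are enough to make the level sets compactly contained, which is all the comparison principles require.
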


Even though we are specially interested in the uniqueness part, the existence statement in Theorem \ref{sublinearthm} deserves also attention. Observe that one has existence of solution if and only if $\lambda<\lambda_1$. This suggests that the nonlinear term does not play an essential role in this case, since the situation is analogous to the linear problem ($\mu\equiv 0$). Recall that this is not the case when $\alpha=q-1$, for which one has existence if and only if $\lambda<\lambda^*$, where $\lambda^*<\lambda_1$ provided $\mu>0$ (see \cite[Remark 6.3]{CLLM}). 

\smallskip

The proof of the existence of solution in Theorem  \ref{sublinearthm} is performed by passing to the limit in certain family of approximate nonsingular problems. We will derive H\"older continuous a priori estimates on the solutions to such a family, which will allow us to pass to the limit. For proving such estimates, the assumption $\alpha\leq 1$ is essential (see Remark \ref{lambda1remark1} below). Moreover, the continuity of the solutions is also essential to prove their uniqueness. Indeed, we state and prove in Section \ref{sec:comparison} two comparison principles valid for continuous lower and upper solutions to singular equations. As far as we know, these two results are new, and they are interesting by themselves as only few uniqueness results for singular equations are known (see \cite{ACM,AM,AS,CL,CLLM}). We follow in their proofs the arguments in \cite{ACJT2} and \cite{CLLM}.

\smallskip

As a summary, our results contribute to the theory of equations with subquadatic growth in the gradient, extending what it is known about the multiplicity of solutions in the quadratic case. On the other hand, they can be seen as a link between the singular and nonsingular theory, in the sense that they show that the presence or not of a singularity is determining only if it is strong enough. Finally, new  existence and uniqueness results are given for strong singularities, where the uniqueness part is specially remarkable.

\smallskip

We organize the paper as follows: in Section \ref{sec:comparison} we deal with the mentioned comparison principles; we devote Section \ref{sec:superlinear} to prove Theorem \ref{maintheorem} as well as some auxiliary results and some consequences of the mentioned theorem; Section \ref{sec:sublinear} contains the proof of Theorem \ref{sublinearthm}, and Section \ref{sec:appendix} is an appendix where we prove a continuation result needed in the proof of Theorem~\ref{maintheorem}.

\subsection*{Acknowledgments}

The author wants to thank warmly T. Leonori and J. Carmona for their helpful contributions to this work.

\subsection*{Notation}

\begin{itemize}
\item For every $x\in\mathbb{R}^N$, the distance from $x$ to $\partial\Omega$ will be denoted as $\delta(x)$. Furthermore, for $p\geq 1$ we will denote as $L^{p,\delta}(\Omega)$ the space of functions $u:\Omega\to\mathbb{R}$ such that \[\|u\|_{L^{p,\delta}(\Omega)}:=\left(\int_\Omega |u(x)|^p\delta(x)dx\right)^\frac{1}{p}<+\infty,\] identifying functions equal up to a set of zero measure.
\item For $p\geq 1$, we will denote the usual Marcinkiewicz space as $\mathcal{M}^p(\Omega)$, i.e., the space of functions $u:\Omega\to\mathbb{R}$ for which there exists $c>0$ such that $|\{|u|>k\}|k^p\leq c$ for all $k>0$. In this case, we denote  \[\|u\|_{\mathcal{M}^p(\Omega)}:=\left(\inf\{c>0:|\{|u|>k\}|k^p\leq c\text{ for all }k>0\right)^\frac{1}{p}.\]
\item For $k\geq 0$, the usual truncation functions will be written as $T_k(s)=\max\{-k,\min\{s,k\}\}$ and $G_k(s)=s-T_k(s)$ for all $s\in\re$.
\item The principal eigenvalue of the $-\Delta$ operator under zero Dirichlet boundary conditions will be denoted as $\lambda_1$, and $\varphi_1$ will denote the corresponding eigenfunction with $\|\varphi_1\|_{L^\infty(\Omega)}=1$.
\end{itemize}

\section{Comparison principles}\label{sec:comparison}

We start with a comparison principle valid for singular equations. The proof basically follows the steps of a similar result in \cite{ACJT2}. However, up to our knowledge this is the first time that a comparison result has been proved including a general positive singular lower order term on the right hand side of the equation (see the comparison results in \cite{CLLM}, where a specific $1$-homogeneous singular term is considered).

\begin{theorem}\label{comprinc1}
Let $1<q\leq 2$, $\lambda\leq 0$, $h\in L^1_{\mbox{\tiny loc}}(\Omega)$ and $g:\Omega\times (0,+\infty)\to [0,+\infty)$ satisfying
\begin{align*}
s&\mapsto g(x,s)\,\,\text{ is nonincreasing for a.e. }x\in\Omega,
\\
x&\mapsto g(x,s)\,\,\text{ is locally essentially bounded for all }s>0.
\end{align*}
Let $u, v\in C(\Omega)\cap W^{1,N}_{\mbox{\tiny loc}}(\Omega)$, with $u,v>0$ in $\Omega$, be such that
\begin{align}
\label{compsub}
&\int_\Omega \nabla u\nabla\phi\leq \lambda \int_\Omega u\phi+\int_\Omega g(x,u)|\nabla u|^q\phi+\int_\Omega h(x)\phi\quad\text{ and }
\\
\label{compsuper}
&\int_\Omega \nabla v\nabla\phi\geq \lambda \int_\Omega v\phi+\int_\Omega g(x,v)|\nabla v|^q\phi+\int_\Omega h(x)\phi
\end{align}
for every $0\leq \phi\in H_0^1(\Omega)\cap L^\infty(\Omega)$ with compact support. 
Suppose also that the following boundary condition holds:
\begin{equation}\label{boundarycond}
\limsup_{x\to x_0}(u(x)-v(x))\leq 0 \quad \forall x_0\in\partial\Omega.
\end{equation}
Then, $u\leq v$ in $\Omega$. 
\end{theorem}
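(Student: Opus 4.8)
\emph{Proof sketch.} The plan is to argue by contradiction through a Stampacchia-type truncation on the super-level sets of the difference $u-v$, in the region where it is positive.

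Suppose $u\leq v$ fails, so that $\{u>v\}$ is a nonempty open set, and fix $\varepsilon>0$ small enough that $\Omega_\varepsilon:=\{x\in\Omega:u(x)-v(x)>\varepsilon\}\neq\emptyset$. By \eqref{boundarycond} and the compactness of $\partial\Omega$, the set $\{u-v\geq\varepsilon\}$ is bounded away from $\partial\Omega$, hence it is a compact set $K\subset\Omega$ with $\overline{\Omega_\varepsilon}\subseteq K$. On $K$ the continuous functions $u,v$ are bounded between two positive constants, so $v\geq c_0>0$ there; by the monotonicity of $g(x,\cdot)$ and the local essential boundedness of $g(\cdot,c_0)$ we get $0\leq g(x,u)\leq g(x,v)\leq M$ a.e.\ on $\Omega_\varepsilon$ for some $M<+\infty$, and moreover $\nabla u,\nabla v\in L^N(\Omega_\varepsilon)$. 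Set $z:=(u-v-\varepsilon)^+$; then $z$ is continuous, $z\in H^1_0(\Omega)\cap L^\infty(\Omega)$, $\supp z\subseteq K$, $z>0$ exactly on $\Omega_\varepsilon$, and $\nabla z=\nabla(u-v)\,\mathbf 1_{\Omega_\varepsilon}$ a.e.

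For $k\geq0$ the function $\phi:=(z-k)^+$ is admissible in \eqref{compsub}--\eqref{compsuper} (nonnegative, bounded, in $H^1_0$, compactly supported in $\Omega$). Subtracting the two inequalities tested against $\phi$ cancels the term $\int_\Omega h\phi$; on $A_k:=\{z>k\}\subseteq\Omega_\varepsilon$ one has $\nabla\phi=\nabla z=\nabla(u-v)$, $\lambda\int_\Omega(u-v)\phi\leq0$ (as $\lambda\leq0$ and $u>v$ on $\supp\phi$), and, since $g(x,u)\leq g(x,v)\leq M$ there, the elementary bound $(|\nabla u|^q-|\nabla v|^q)^+\leq q|\nabla u|^{q-1}|\nabla u-\nabla v|$ gives
\[
g(x,u)|\nabla u|^q-g(x,v)|\nabla v|^q\ \leq\ Mq\,|\nabla u|^{q-1}|\nabla z|\qquad\text{a.e.\ on }A_k .
\]
Hence $\int_{A_k}|\nabla z|^2\leq Mq\int_{A_k}|\nabla u|^{q-1}|\nabla z|(z-k)$. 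Both integrands vanish on $\{\nabla z=0\}$, so, writing $A_k':=A_k\setminus\{\nabla z=0\}$ and using $q\leq2$ (whence $|\nabla u|^{q-1}\in L^N(\Omega_\varepsilon)$), Hölder with exponents $(N,2,2^{*})$ applied to the factors $|\nabla u|^{q-1}$, $|\nabla z|$, $(z-k)^+$ together with the Sobolev inequality $\|(z-k)^+\|_{L^{2^{*}}(\Omega)}\leq S\|\nabla z\|_{L^2(A_k)}$ yield
\[
\int_{A_k}|\nabla z|^2\ \leq\ MqS\,\big\||\nabla u|^{q-1}\big\|_{L^{N}(A_k')}\int_{A_k}|\nabla z|^2 .
\]

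As $k\uparrow\max_\Omega z$ the sets $A_k'$ decrease to a null set (the gradient of an $H^1$ function vanishes a.e.\ on its level sets), and since $|\nabla u|^{(q-1)N}\in L^1(\Omega_\varepsilon)$, absolute continuity of the integral gives $\||\nabla u|^{q-1}\|_{L^{N}(A_k')}\to0$. Thus, for $k$ close enough to $\max_\Omega z$, the constant above is $<1$, which forces $\nabla z=0$ a.e.\ on $\{z>k\}$. Since $z$ is continuous, it is then constant on each connected component of the open set $\{z>k\}$; but $\overline{\{z>k\}}$ is a compact subset of $\Omega$ on whose boundary $z$ equals $k$, so $\{z>k\}=\emptyset$, contradicting $k<\max_\Omega z$. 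Hence $\max_\Omega z\leq0$, i.e.\ $u-v\leq\varepsilon$ on $\Omega$; letting $\varepsilon\to0^+$ yields $u\leq v$.

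The main obstacle is this last absorption step: a plain energy estimate only bounds $\int|\nabla z|^2$ rather than showing it vanishes, and the truncation on super-level sets is exactly what overcomes it, at the cost of the sharp integrability $|\nabla u|^{q-1},|\nabla v|^{q-1}\in L^{N}_{\mathrm{loc}}(\Omega)$ — which is where the hypotheses $u,v\in W^{1,N}_{\mathrm{loc}}(\Omega)$ and $q\leq2$ are used, the quadratic case $q=2$ being the borderline one. The monotonicity of $g(x,\cdot)$ enters only to discard the favorable term $(g(x,u)-g(x,v))|\nabla u|^q\leq0$ on $\{u>v\}$, and no sign assumption on $h$ is needed because $h$ disappears when the two inequalities are subtracted.
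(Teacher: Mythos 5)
Your proof is correct and follows essentially the same Stampacchia truncation scheme as the paper: test with $(z-k)^+$, use the boundary condition to get compact support of the truncation, exploit the monotonicity of $g$ to bound the singular coefficient on the super-level set, and absorb the gradient term via H\"older with the local $L^N$ integrability of $|\nabla u|$, the $L^N$ factor shrinking as $k$ approaches the supremum. The minor departures --- the one-sided elementary bound $(a^q-b^q)^+\le q\,a^{q-1}|a-b|$ in place of the paper's integral mean-value representation of $|\nabla u|^q-|\nabla v|^q$, restricting to $\{\nabla z\neq 0\}$ rather than deleting level sets of positive measure, and the preliminary $\varepsilon$-shift (which is redundant, since taking $k>0$ directly already secures compact support, as the paper does) --- are cosmetic and do not change the substance.
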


\begin{remark}
Theorem \ref{comprinc1} is valid for a wide class of lower order terms. For instance, the model example is 
\[g(x,s)=\frac{\mu(x)}{s^\alpha}\quad\text{a.e. }x\in\Omega,\,\,\forall s>0,\]for any $\alpha>0$ and $0\leq\mu\in L_{\mbox{\tiny loc}}^\infty(\Omega)$. In particular, the growth of the singularity is irrelevant in the proof. Nonetheless, the comparison principle does not work for $\lambda>0$. Indeed, as we pointed out in the Introduction, if the singularity is mild enough in some sense, then a multiplicity phenomenon appears for $\lambda>0$. Thus, for the model case, the comparison result is sharp in terms of the sign of $\lambda$.
\end{remark}

\begin{proof}[Proof of Theorem \ref{comprinc1}]
Let us denote $w=u-v$. For $k>0$, we consider the function $\phi=(w-k)^+$, and we also denote
\[A_k=\{x\in\Omega:w(x)\geq k\}.\] 
Notice that $\supp(\phi)\subset A_k$. Moreover, condition \eqref{boundarycond} implies that $A_k\subset\subset\Omega$, so $\phi$ has compact support. In particular, $\phi\in H_0^1(\Omega)\cap L^\infty(\Omega)$, so it can be taken as test function in \eqref{compsub} and \eqref{compsuper}, obtaining that

\begin{equation}\label{ineqsub1}
\int_\Omega \nabla u \nabla (w-k)^+ \leq \lambda\int_\Omega u(w-k)^+ +\int_\Omega g(x,u)|\nabla u|^q(w-k)^+ +\int_\Omega h(x)(w-k)^+
\end{equation}
and
\begin{equation}\label{ineqsuper1}
\int_\Omega \nabla v \nabla (w-k)^+ \geq \lambda \int_\Omega v(w-k)^+ +\int_\Omega g(x,v)|\nabla v|^q(w-k)^+ +\int_\Omega h(x)(w-k)^+.
\end{equation}
Subtracting \eqref{ineqsuper1} from \eqref{ineqsub1} we get
\begin{equation*}
\int_\Omega|\nabla(w-k)^+|^2\leq \lambda\int_\Omega((w-k)^+)^2+\lambda k\int_\Omega(w-k)^+ +\int_\Omega(g(x,u)|\nabla u|^q-g(x,v)|\nabla v|^q)(w-k)^+.
\end{equation*}
Since $\lambda\leq 0$, we deduce that
\begin{equation}\label{ineqsubtract}
\int_\Omega|\nabla(w-k)^+|^2\leq\int_\Omega(g(x,u)|\nabla u|^q-g(x,v)|\nabla v|^q)(w-k)^+.
\end{equation}

Assume in order to achieve a contradiction that $w^+\not\equiv 0$, and let $k_0\in (0,\|w^+\|_{L^\infty(\Omega)})$. Let also $\omega\subset\subset\Omega$ be an open set such that $A_{k_0}\subset\omega$. Observe that $A_k\subset A_{k_0}$ for all $k\geq k_0$. Then, using the properties of $g$, it is clear that 
\begin{equation*}
g(x,u)\leq g(x,v)\leq g(x,\inf_\omega(v))\leq\left\|g(\cdot,\inf_\omega(v))\right\|_{L^\infty(\omega)}
\end{equation*}
in $A_k$ for every $k\in [k_0,\|w^+\|_{L^\infty(\Omega)}]$. Therefore, from \eqref{ineqsubtract} we deduce that
\begin{align}\label{ineqgbounded}
\int_\Omega|\nabla(w-k)^+|^2 &\leq\int_\Omega g(x,v)||\nabla u|^q-|\nabla v|^q|(w-k)^+ 
\\
\nonumber &\leq \left\|g(\cdot,\inf_\omega(v))\right\|_{L^\infty(\omega)}\int_{A_k} ||\nabla u|^q-|\nabla v|^q|(w-k)^+
\end{align}
for every $k\in [k_0,\|w^+\|_{L^\infty(\Omega)}]$.

\smallskip

For every $j\in\mathbb{R}$, let us denote $\Omega_j=\{x\in\Omega: |w(x)|=j\}$, and consider also the set $J=\{j\in\mathbb{R}:|\Omega_j|\not =0\}$. Since $|\Omega|<\infty$, then $J$ is at most countable, which implies that the set $\bigcup_{j\in J}\Omega_j$ is measurable, and we also have that

\[
\nabla w=0 \quad\mbox{ in }\bigcup_{j\in J}\Omega_j\implies |\nabla u_1|=|\nabla v_1|\quad\mbox{ in }\bigcup_{j\in J}\Omega_j.\]
Hence, if we define the set $Z=\Omega\setminus \bigcup_{j\in J}\Omega_j$, we deduce from \eqref{ineqgbounded} that 
\begin{equation}\label{pasointermedio}
\int_\Omega |\nabla (w-k)^+|^2 \leq \left\|g(\cdot,\inf_\omega(v))\right\|_{L^\infty(\omega)} \int_{A_k\cap Z} \left(\int_0^1 \frac{d}{dt}(|t\nabla u +(1-t)\nabla v|^q) dt\right) (w-k)^+.
\end{equation}
Taking into account that $u, v\in W^{1,N}_{\mbox{\tiny loc}}(\Omega)$ and $A_k\subset\subset\Omega$, we have that \[|t\nabla u +(1-t)\nabla v|\leq |\nabla u|+|\nabla v|+1\equiv \eta \in  L^N(A_k\cap Z).\] Hence, from \eqref{pasointermedio} we derive that

\begin{align}\label{ineqcontr}
\nonumber\|(w-k)^+\|_{H_0^1(\Omega)}^2 
&\leq    C\int_{A_k\cap Z} \left(\int_0^1 |t\nabla u+(1-t)\nabla v|^{q-2}(t\nabla u+(1-t)\nabla v) \nabla w dt\right) (w-k)^+ 
\\
&\leq   C\int_{A_k\cap Z} \eta^{q-1}  |\nabla w| (w-k)^+\leq   C \int_{A_k\cap Z} \eta |\nabla (w-k)^+| (w-k)^+
\\
\nonumber &\leq    C\|\eta\|_{L^N(A_k\cap Z)} \|(w-k)^+\|_{H_0^1(\Omega)}\|(w-k)^+\|_{L^{2^*}(\Omega)}
\\
\nonumber &\leq   C\|\eta\|_{L^N(A_k\cap Z)} \|(w-k)^+\|_{H_0^1(\Omega)}^2.
\end{align}

Let us now define the function $F:[k_0,\|w^+\|_{L^\infty(\Omega)}]\to \mathbb{R}$ by 
\[
F(k)=\|\eta\|_{L^N(A_k\cap Z)}=\||\nabla u|+|\nabla v|+1\|_{L^N(A_k\cap Z)} \quad \forall k\in [k_0,\|w^+\|_{L^\infty(\Omega)}),\]
and $F(\|w^+\|_{L^\infty(\Omega)})=0$. It is clear that $F$ is nonincreasing and continuous. Thus, choosing $k$ close enough to   $\|w^+\|_{L^\infty(\Omega)}$, we deduce from \eqref{ineqcontr} that $(w-k)^+\equiv 0$. That is to say, $w\leq k$ in $\Omega$. But this is not possible since $k<\|w^+\|_{L^\infty(\Omega)}=\sup_{\Omega} (w)$.

\smallskip

In conclusion, we have proved that $w^+\equiv 0$, i.e., $w\leq 0$ in $\Omega$.
\end{proof}

Next theorem is another comparison principle which works for $\lambda>0$. In turn, one has to impose stronger hypotheses on $g$ and $h$. The proof is similar to the one above combined with some ideas in \cite{CLLM}.

\begin{theorem}\label{comprinc2}

Let $1<q\leq 2$, $\lambda\in\mathbb{R}$, $0\leq h\in L^1_{\mbox{\tiny loc}}(\Omega)$ and $g:\Omega\times (0,+\infty)\to [0,+\infty)$ satisfying
\begin{align*}
s&\mapsto s^{q-1}g(x,s)\,\,\text{ is nonincreasing for a.e. }x\in\Omega,
\\
x&\mapsto g(x,s)\,\,\,\,\quad\quad\text{is locally essentially bounded for all }s>0.
\end{align*}
If $\lambda>0$, assume also that
\begin{equation}\label{hcond}
\forall\omega\subset\subset\Omega\quad\exists c_\omega>0:\quad h\geq c_\omega\text{ in }\omega.
\end{equation}
Let $u, v\in C(\Omega)\cap W^{1,N}_{\mbox{\tiny loc}}(\Omega)$, with $u,v>0$ in $\Omega$, satisfying respectively \eqref{compsub} and \eqref{compsuper} for every $0\leq \phi\in H_0^1(\Omega)\cap L^\infty(\Omega)$ with compact support. 
Suppose also that, for every $\varepsilon>0$, the following boundary condition holds:
\begin{equation}\label{wellor:eps}
\limsup_{x\to x_0}\left(\frac{u(x)}{v(x)+\varepsilon}\right)\leq 1 \quad \forall x_0\in\partial\Omega.
\end{equation}
Then, $u\leq v$ in $\Omega$. 
\end{theorem}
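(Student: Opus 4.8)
\textbf{Proof proposal for Theorem \ref{comprinc2}.}

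The plan is to mimic the proof of Theorem \ref{comprinc1}, but to overcome the loss of sign of $\lambda$ by working with the ratio $u/(v+\varepsilon)$ rather than with the difference $u-v$, in the spirit of \cite{CLLM}. Fix $\varepsilon>0$ and set $v_\varepsilon=v+\varepsilon$. Since $g$ is only assumed to make $s\mapsto s^{q-1}g(x,s)$ nonincreasing, the natural test function that exploits this monotonicity is built from the quotient. Concretely, for $k>1$ I would consider $\phi_1=\left(\frac{u}{v_\varepsilon}-k\right)^+ \frac{u}{v_\varepsilon}$ (or a similar variant) as test function in \eqref{compsub}, and $\phi_2=\left(\frac{u}{v_\varepsilon}-k\right)^+ \frac{u^2}{v_\varepsilon^2}$ in \eqref{compsuper}; the precise powers are chosen so that after subtracting the two inequalities the singular terms combine into something controlled by the monotonicity of $s^{q-1}g(x,s)$. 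As in Theorem \ref{comprinc1}, condition \eqref{wellor:eps} guarantees that the set $A_k=\{x\in\Omega:\ u/v_\varepsilon\geq k\}$ is compactly contained in $\Omega$ for $k>1$, so both test functions are admissible ($H_0^1\cap L^\infty$ with compact support, using $u,v\in C(\Omega)\cap W^{1,N}_{\mathrm{loc}}$ and $v_\varepsilon$ bounded away from $0$).

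The core computation is to expand $\nabla\phi_i$, substitute, and rearrange. Writing $z=u/v_\varepsilon$, the gradient terms produce a coercive quantity of the form $\int_{A_k} v_\varepsilon^{?}\,|\nabla (z-k)^+|^2$ plus lower-order pieces, while the zeroth-order $\lambda$-terms recombine — and here is where the hypothesis $\lambda\in\re$ rather than $\lambda\le 0$ is afforded: because $\phi_1,\phi_2$ are comparable to the same weight, the $\lambda$-contributions from \eqref{compsub} and \eqref{compsuper} largely cancel on $A_k$ (one checks $\lambda\int(u\phi_1-v\phi_2)$ has the right sign, or is absorbed), which is exactly the device used in \cite{CLLM}. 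The singular gradient terms $g(x,u)|\nabla u|^q$ and $g(x,v)|\nabla v|^q$ are handled by writing $\nabla u = z\nabla v_\varepsilon + v_\varepsilon\nabla z$, using $h\ge 0$ (and, when $\lambda>0$, the strict positivity \eqref{hcond} of $h$ on compact sets, which compensates the bad $\lambda$-term uniformly on $\omega\supset\supset A_{k_0}$), and invoking the monotonicity $u^{q-1}g(x,u)\le v_\varepsilon^{q-1}g(x,v_\varepsilon)\le v^{q-1}g(x,v)$ on $A_k$ together with the local essential boundedness of $x\mapsto g(x,s)$ to bound the relevant coefficient by a constant on a fixed neighborhood $\omega\subset\subset\Omega$. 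After these reductions one is left, just as in \eqref{ineqgbounded}–\eqref{ineqcontr}, with an inequality
\[
\|(z-k)^+\|_{H_0^1(\Omega)}^2 \le C\,\|\eta\|_{L^N(A_k\cap Z)}\,\|(z-k)^+\|_{H_0^1(\Omega)}^2,
\]
where $\eta = |\nabla u|+|\nabla v|+1 \in L^N_{\mathrm{loc}}(\Omega)$ and $Z$ removes the level sets of positive measure on which $\nabla z=0$. Defining $F(k)=\|\eta\|_{L^N(A_k\cap Z)}$, which is continuous, nonincreasing, and vanishes at $k=\|z\|_{L^\infty}$, one picks $k$ close enough to $\sup_\Omega z$ to force $(z-k)^+\equiv 0$, contradicting $k<\sup_\Omega z$ unless $\sup_\Omega z\le 1$. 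Hence $u\le v+\varepsilon$ in $\Omega$ for every $\varepsilon>0$, and letting $\varepsilon\to 0^+$ gives $u\le v$.

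The main obstacle I anticipate is the bookkeeping in the central identity: choosing the exact exponents in $\phi_1,\phi_2$ so that (i) the coercive term survives with a good weight, (ii) the $\lambda$-terms cancel or acquire the correct sign for \emph{all} real $\lambda$, and (iii) the singular gradient terms assemble into a difference controlled by the monotonicity of $s\mapsto s^{q-1}g(x,s)$ — this last point is the reason the hypothesis on $g$ is strengthened compared to Theorem \ref{comprinc1}. The role of \eqref{hcond} when $\lambda>0$ is precisely to supply a positive lower-order term that dominates the (now wrong-signed) $\lambda z$ contribution on the compact set where the argument takes place; verifying that this domination is uniform in $k\in[k_0,\|z\|_{L^\infty}]$ is the delicate quantitative step. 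Once the identity is in place, the De Giorgi–type iteration on $F(k)$ is essentially identical to the one in Theorem \ref{comprinc1}.
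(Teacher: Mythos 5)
Your overall plan is pointed in the right direction — work with the quotient $u/(v+\varepsilon)$ rather than the difference, exploit the monotonicity of $s\mapsto s^{q-1}g(x,s)$, and use the positivity of $h$ to absorb the bad sign of $\lambda$ on compacta — but the specific test functions you wrote down do not produce the cancellation you need, and you explicitly punt on the bookkeeping, which is where the argument actually lives. With $\phi_1=(z-k)^+z$ in \eqref{compsub} and $\phi_2=(z-k)^+z^2$ in \eqref{compsuper} (writing $z=u/v_\varepsilon$), the singular terms become $\int u^{q-1}g(x,u)\,\dfrac{|\nabla u|^q u^{2-q}}{v_\varepsilon}(z-k)^+$ and $\int v^{q-1}g(x,v)\,\dfrac{|\nabla v|^q u^2}{v^{q-1}v_\varepsilon^2}(z-k)^+$; even after applying $u^{q-1}g(x,u)\le v^{q-1}g(x,v)$ on $A_k$ the remaining weights $\dfrac{|\nabla u|^q u^{2-q}}{v_\varepsilon}$ and $\dfrac{|\nabla v|^q u^2}{v^{q-1}v_\varepsilon^2}$ do not assemble into a difference that the Lipschitz/level-set argument of Theorem~\ref{comprinc1} can handle. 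The coercive term also does not come out as a clean Dirichlet energy of $(z-k)^+$.

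The missing idea is that one should pass to the \emph{logarithm}: set $w_\varepsilon=\log\!\bigl(u/(v+\varepsilon)\bigr)$ and test \eqref{compsub} with $\dfrac{(w_\varepsilon-k)^+}{u}$ and \eqref{compsuper} with $\dfrac{(w_\varepsilon-k)^+}{v+\varepsilon}$. This is what the paper does. Dividing by $u$ and $v+\varepsilon$ respectively is precisely what (i) turns the principal parts into $\dfrac{\nabla u}{u}$ and $\dfrac{\nabla v}{v+\varepsilon}$, whose difference is $\nabla w_\varepsilon$; (ii) rewrites the singular terms as $u^{q-1}g(x,u)\,|\nabla\log u|^q$ and $v^{q-1}g(x,v)\,\dfrac{|\nabla v|^q}{v^{q-1}(v+\varepsilon)}\ge v^{q-1}g(x,v)\,|\nabla\log(v+\varepsilon)|^q$, so the monotonicity hypothesis applies directly and the residual difference is $\bigl||\nabla\log u|^q-|\nabla\log(v+\varepsilon)|^q\bigr|$, which the level-set argument controls; and (iii) normalizes the $\lambda$-terms to $\lambda\int(w_\varepsilon-k)^+$ up to a small error $\lambda\varepsilon\int\dfrac{(w_\varepsilon-k)^+}{v+\varepsilon}$, which \eqref{hcond} dominates after choosing $\varepsilon<\dfrac{(1-e^{-k_0})\,c_\omega}{\lambda}$ (this uses $u\ge e^{k_0}(v+\varepsilon)$ on $A_{k_0}$, giving a uniform-in-$k$ lower bound for $1-(v+\varepsilon)/u$). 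Without this logarithmic change of variables, the structure you are hoping for ("the singular terms combine into something controlled by the monotonicity") simply does not materialize for any power-type choice $\phi_i=(z-k)^+z^{a_i}$, so the proposal as stated has a genuine gap precisely at the step you flag as the "main obstacle."
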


\begin{proof}
For every $\varepsilon>0$, let us consider the function 
\[w_\varepsilon=\log\left(\frac{u}{v+\varepsilon}\right).\]
We claim that $w_\varepsilon^+\equiv 0$ for any $\varepsilon>0$. Suppose by contradiction that there exists  $ \varepsilon_0>0$ such that $w^+_{\varepsilon_0} \not\equiv 0 $. Let us fix $k_0\in \left(0,\|w_{\varepsilon_0}^+\|_{L^\infty(\Omega)}\right)$ and $\varepsilon \in (0,\varepsilon_0)$, the latter to be chosen small enough later. It is clear that $w_{\varepsilon_0}\leq w_\varepsilon$ in $\Omega$, so $w_\varepsilon^+\not\equiv 0$.

\smallskip

For $k\in [k_0,\|w_\varepsilon^+\|_{L^\infty(\Omega)}]$, let us denote \[A_k=\{x\in\Omega: w_\varepsilon(x)\geq k\}=\{x\in\Omega:u(x)\geq e^k (v(x)+\varepsilon)\}.\] Notice that $\supp(w_\varepsilon-k)^+\subset A_k$. By \eqref{wellor:eps}, we also have that $\displaystyle \limsup_{x\to x_0}w_\varepsilon(x)\leq 0$ for all $x_0\in \partial\Omega$, which implies that $A_k\subset\subset\Omega$. Then, the function $(w_\varepsilon-k)^+$ has compact support, and in particular, $(w_\varepsilon-k)^+\in H_0^1(\Omega)\cap L^\infty(\Omega)$. Therefore, we may take $\frac{(w_\varepsilon-k)^+}{u}$ as test function in \eqref{compsub}, and $\frac{(w_\varepsilon-k)^+}{v+\varepsilon}$ in \eqref{compsuper}, obtaining
\begin{align}\label{ineqsub2}
\nonumber\int_\Omega \frac{\nabla u}{u} \nabla (w_\varepsilon-k)^+ &\leq \int_\Omega \frac{|\nabla u|^2}{u^2}(w_\varepsilon-k)^+ +\lambda \int_\Omega (w_\varepsilon-k)^+
\\
&+\int_\Omega u^{q-1}g(x,u)\frac{|\nabla u|^q}{u^q}(w_\varepsilon-k)^+ +\int_\Omega \frac{h(x)}{u}(w_\varepsilon-k)^+
\end{align}
and, using that $g\geq 0$,
\begin{align}
\label{ineqsuper2}
\nonumber\int_\Omega \frac{\nabla v}{v+\varepsilon} \nabla (w_\varepsilon -k)^+ &\geq \int_\Omega \frac{|\nabla v|^2}{(v+\varepsilon)^2}(w_\varepsilon -k)^+ +\lambda \int_\Omega \frac{v}{v+\varepsilon}(w_\varepsilon-k)^+
\\
\nonumber&+\int_\Omega v^{q-1}g(x,v)\frac{|\nabla v|^q}{v^{q-1}(v+\varepsilon)}(w_\varepsilon-k)^+ +\int_\Omega \frac{h(x)}{v+\varepsilon}(w_\varepsilon-k)^+
\\
&\geq \int_\Omega \frac{|\nabla v|^2}{(v+\varepsilon)^2}(w_\varepsilon -k)^+ +\lambda \int_\Omega (w_\varepsilon-k)^+ - \int_\Omega \frac{\lambda\varepsilon}{v+\varepsilon}(w_\varepsilon-k)^+
\\
\nonumber&+\int_\Omega v^{q-1}g(x,v)\frac{|\nabla v|^q}{(v+\varepsilon)^q}(w_\varepsilon-k)^+ +\int_\Omega \frac{h(x)}{v+\varepsilon}(w_\varepsilon-k)^+.
\end{align}

Let $\omega\subset\subset\Omega$ be an open set such that $A_{k_0}\subset\omega$. Observe that $A_k\subset A_{k_0}$ for all $k\geq k_0$. Then, it is clear that 
\begin{equation*}
u^{q-1}g(x,u)\leq v^{q-1}g(x,v)\leq \sup_\omega(v)^{q-1}g(x,\inf_\omega(v))\leq\sup_\omega(v)^{q-1}\left\|g(\cdot,\inf_\omega(v))\right\|_{L^\infty(\omega)}
\end{equation*}
in $A_k$ for every $k\in [k_0,\|w_\varepsilon^+\|_{L^\infty(\Omega)}]$. Therefore,
\begin{align*}
&\int_\Omega \left(u^{q-1}g(x,u)\frac{|\nabla u|^q}{u^q} -v^{q-1}g(x,v)\frac{|\nabla v|^q}{(v+\varepsilon)^q}\right)(w_\varepsilon-k)^+ 
\\
&\leq\sup_\omega(v)^{q-1}\left\|g(\cdot,\inf_\omega(v))\right\|_{L^\infty(\omega)}\int_\Omega\left|\frac{|\nabla u|^q}{u^q} -\frac{|\nabla v|^q}{(v+\varepsilon)^q}\right|(w_\varepsilon-k)^+.
\end{align*}

Moreover, we have that
\begin{equation}\label{hineq}
h\left(\frac{1}{u}-\frac{1}{v+\varepsilon}\right)+\frac{\lambda\varepsilon}{v+\varepsilon}\leq 0 \quad\mbox{ in } A_k \mbox{ for every }k\in [k_0,\|w_\varepsilon^+\|_{L^\infty(\Omega)}]
\end{equation}
whenever $\lambda\leq 0$. On the other hand, if $\lambda>0$, let us take 
\[\varepsilon<\min\left\{\varepsilon_0,\frac{1-e^{-k_0}}{\lambda}\ c_\omega\right\},\]
where $c_\omega$ is the constant given by \eqref{hcond}. With this choice, it is straightforward to deduce that \eqref{hineq} holds again.

\smallskip

Therefore, subtracting \eqref{ineqsub2} and \eqref{ineqsuper2}, and taking into account that $u,v\in W_{\mbox{\tiny loc}}^{1,N}(\Omega)$ and also \eqref{hineq}, we may argue as in the proof of \cite[Theorem 3.2]{CLLM} and achieve a contradiction taking $k$ close enough to $\|w_\varepsilon^+\|_{L^\infty(\Omega)}$.

\smallskip

In conclusion, necessarily $w_\varepsilon^+\equiv 0$ for any $\varepsilon>0$, i.e., $u\leq v+\varepsilon$ in $\Omega$ for any $\varepsilon>0$. Letting $\varepsilon\to 0$  it follows that $u\leq v$ in $\Omega$.
\end{proof}

\section{Multiplicity for $0\leq\alpha<q-1$}\label{sec:superlinear}

In this section we will study problem \eqref{problem} under condition \eqref{H1}. In this case observe that, if $0<u\in W_{\mbox{\tiny loc}}^{1,1}(\Omega)$ and $t>0$, then
\[\frac{|\nabla tu|^q}{(tu)^\alpha}=t^{q-\alpha}\frac{|\nabla u|^q}{u^\alpha}.\]
Since $\alpha<q-1$, then $q-\alpha>1$. That is to say, the lower order term has \emph{superlinear homogeneity}. 

\smallskip

The concept of solution we will adopt is gathered in the following definition.

\begin{definition}\label{solution}
Given $\lambda\in\re$, a subsolution to \eqref{problem} is a function $u\in H^1_0(\Omega)\cap L^\infty(\Omega)$ such that $u>0$ a.e. in $\Omega$, $\mu\frac{|\nabla u|^q}{u^\alpha}\in L^1_{\mbox{\tiny loc}}(\Omega)$ and 
\[\int_\Omega\nabla u\nabla \phi\leq \lambda\int_\Omega u\phi+\int_\Omega\mu(x)\frac{|\nabla u|^q}{u^\alpha}\phi+\int_\Omega f(x)\phi\quad\forall 0\leq\phi\in C_c^1(\Omega).\]
Reciprocally, a supersolution to \eqref{problem} is  a function $u\in H^1(\Omega)\cap L^\infty(\Omega)$ such that $u>0$ a.e. in $\Omega$, $\mu\frac{|\nabla u|^q}{u^\alpha}\in L^1_{\mbox{\tiny loc}}(\Omega)$ and satisfies the reverse inequality.
Finally, a solution to \eqref{problem} is a function $u\in H_0^1(\Omega)\cap L^\infty(\Omega)$ which is both a subsolution and a supersolution to \eqref{problem}.
\end{definition}

\begin{remark}\label{equivalentdef}
Arguing as in \cite[Appendix]{CLLM}, it can be proved that a definition of subsolution, supersolution and solution to \eqref{problem} using test functions in $H_0^1(\Omega)\cap L^\infty(\Omega)$ is equivalent to Definition \ref{solution}. Moreover, even the concepts of supersolution and solution to problem \eqref{problem} with test functions only in $H_0^1(\Omega)\cap L^\infty(\Omega)$ are equivalent to the corresponding concepts in Definition \ref{solution}.
\end{remark}

\begin{remark}\label{lambda1remark1}
Assume that \eqref{H1} holds. By taking $\varphi_1$ as test function in the weak formulation of \eqref{problem} one easily deduces that, if $u$ is a solution to \eqref{problem}, then $\lambda<\lambda_1$. Furthermore, since $\alpha\in [0,1]$, it can be proved as in \cite[Appendix]{CLLM}, which follows the ideas in \cite{LU}, that every solution $u$ to \eqref{problem}, for any $\lambda<\lambda_1$, satisfies that $u\in C^{0,\eta}(\overline{\Omega})$ for some $\eta\in (0,1)$. Finally, since the solutions to \eqref{problem} are positive in compact subsets of $\Omega$, then it can be seen again as in the mentioned appendix  that $u\in W^{1,N}_{\mbox{\tiny loc}}(\Omega)$ for every solution to \eqref{problem} for any $\lambda<\lambda_1$. 
\end{remark}

Our first result is concerned with the existence and uniqueness of solution to \eqref{problem} for $\lambda\leq 0$. The existence is well-known from the works that are quoted in the proof below. However, a precise statement for unbounded datum $f$ is required for our purposes. In any case, the uniqueness is new up to our knowledge.

\begin{proposition}\label{exislambdaneg}
Assume that \eqref{H1} holds. Then, problem \eqref{problem} has a unique  solution for all $\lambda<0$. Moreover, assume additionally that either $\alpha>0$ or the following smallness condition holds:
\[a\left(b+\|f\|_{L^{p_0}(\Omega)}\right)<\left(\frac{2}{N}-\frac{1}{p_0}\right)\frac{N^2 |B_1(0)|^\frac{2}{N}}{|\Omega|^{\frac{2}{N}-\frac{1}{p_0}}},\]
where $B_1(0)$ denotes the unit ball in $\mathbb{R}^N$, and $a,b>0$ are such that
\[\|\mu\|_{L^\infty(\Omega)}|s|^q\leq a|s|^2+b\quad\forall s\in\mathbb{R}.\] Then $(P_0)$ has a unique solution. 
\end{proposition}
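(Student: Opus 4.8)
The plan is to treat the existence and the uniqueness parts by essentially different mechanisms. For \emph{uniqueness}, when $\lambda<0$ I would simply invoke Theorem \ref{comprinc1}: any solution $u$ to \eqref{problem} lies in $C(\Omega)\cap W^{1,N}_{\mbox{\tiny loc}}(\Omega)$ with $u>0$ in $\Omega$ by Remark \ref{lambda1remark1}, it satisfies \eqref{compsub} and \eqref{compsuper} with $g(x,s)=\mu(x)s^{-\alpha}$ and $h=f$, and the boundary condition \eqref{boundarycond} is met (with equality $0$) since $u=0$ continuously on $\partial\Omega$. Given two solutions $u_1,u_2$, applying the theorem twice yields $u_1\le u_2$ and $u_2\le u_1$. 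For $\lambda=0$ under the extra assumption $\alpha>0$, the same argument works verbatim, because Theorem \ref{comprinc1} allows $\lambda\le 0$ and the model $g(x,s)=\mu(x)/s^\alpha$ is exactly of the admissible form; note the smallness condition is \emph{not} needed here, it is only invoked to guarantee existence in the nonsingular subcase.

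For \emph{existence} when $\lambda<0$, I would follow the by-now classical approach for equations with subquadratic (indeed arbitrary sub-quadratic, $1<q<2$) growth in the gradient and a possibly singular zero-order perturbation, citing \cite{BMP1,BMP2,GMP} for the nonsingular ingredients and adapting to the singular term as in \cite{ACM,CL}. Concretely, set $\lambda<0$ and introduce the approximating problems
\begin{equation*}
\begin{cases}
-\Delta u_n=\lambda u_n+\mu(x)\dfrac{|\nabla u_n|^q}{(u_n+\frac1n)^\alpha}+f_n(x)&\text{ in }\Omega,\\[2mm]
u_n=0&\text{ on }\partial\Omega,
\end{cases}
\end{equation*}
with $f_n=T_n(f)$, which are solvable by Schauder-type fixed point arguments since the nonlinearity is now bounded near $u=0$. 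The core of the proof is a chain of a priori estimates independent of $n$: an $H_0^1(\Omega)$ estimate obtained by testing with $u_n$ and absorbing the gradient term using $1<q<2$ together with Young's inequality (the negativity of $\lambda$ gives a coercive term $-\lambda\int u_n^2$ that helps control lower order contributions, while the $L^{p_0}$ bound on $f$ with $p_0>N/2$ is exactly what is needed for the Stampacchia $L^\infty$ bound); then a uniform $L^\infty(\Omega)$ bound via Stampacchia truncation combined with the gradient-term absorption; then a uniform local lower bound $u_n\ge c_\omega>0$ on each $\omega\subset\subset\Omega$ coming from $f\gneqq 0$ and the maximum principle, which tames the singular denominator $(u_n+\frac1n)^{-\alpha}\le c_\omega^{-\alpha}$ locally; and finally, with the denominator locally bounded, a uniform bound on $\mu|\nabla u_n|^q/(u_n+\frac1n)^\alpha$ in $L^1_{\mbox{\tiny loc}}(\Omega)$ (and strong $H^1_{\mbox{\tiny loc}}$ compactness of $u_n$). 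Passing to the limit — using a.e.\ convergence of $\nabla u_n$ and Fatou/Vitali on the lower order term, and the uniform $L^\infty$ bound to get $u\in H_0^1(\Omega)\cap L^\infty(\Omega)$ — produces a solution in the sense of Definition \ref{solution}.

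For the case $\lambda=0$ with $\alpha=0$, the coercive term $-\lambda\int u_n^2$ disappears, so the $H_0^1$ and $L^\infty$ estimates no longer come for free; this is precisely where the smallness hypothesis enters. Writing $\|\mu\|_{L^\infty(\Omega)}|s|^q\le a|s|^2+b$ and testing the approximate equation (now with $\lambda=0$, $\alpha=0$) with $u_n$, one gets
\[
\int_\Omega|\nabla u_n|^2\le a\int_\Omega|\nabla u_n|^2\,|\text{something}|+\big(b+\|f\|_{L^{p_0}(\Omega)}\big)\,\|u_n\|\,,
\]
and the displayed numerical threshold $a(b+\|f\|_{L^{p_0}(\Omega)})<\big(\frac{2}{N}-\frac{1}{p_0}\big)\frac{N^2|B_1(0)|^{2/N}}{|\Omega|^{2/N-1/p_0}}$ is exactly the condition — built from the sharp Sobolev/Talenti constant and the exponent arithmetic $\frac2N-\frac1{p_0}$ governing the $L^{p_0}\hookrightarrow$ duality for the Stampacchia estimate — under which the absorption closes and yields a uniform bound, hence a solution by the same limiting procedure; uniqueness for $(P_0)$ in this subcase again follows from Theorem \ref{comprinc1} with $\lambda=0$. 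The main obstacle, and the only genuinely delicate point, is the careful bookkeeping in this $\lambda=0$, $\alpha=0$ borderline estimate: one must track the optimal constants so that the smallness condition as stated is sufficient, rather than merely some unspecified smallness; everything else is a routine adaptation of known singular-elliptic techniques.
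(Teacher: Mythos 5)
Your treatment of the uniqueness part is correct and matches the paper: both invoke Theorem \ref{comprinc1} together with the regularity from Remark \ref{lambda1remark1}.

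The existence part, however, has a genuine gap. You propose to obtain the uniform $H_0^1(\Omega)$ estimate for $\lambda<0$ by testing with $u_n$ and absorbing the gradient term via Young's inequality, arguing that ``the negativity of $\lambda$ gives a coercive term $-\lambda\int u_n^2$ that helps control lower order contributions.'' This does not close. Under \eqref{H1} we have $\alpha<q-1$, i.e.\ $q-\alpha>1$, so the lower order term has \emph{superlinear} homogeneity. Concretely, testing with $u_n$ produces
\[
\int_\Omega|\nabla u_n|^2+|\lambda|\int_\Omega u_n^2
\;\le\;\|\mu\|_{L^\infty(\Omega)}\int_\Omega|\nabla u_n|^q\,u_n^{1-\alpha}+\int_\Omega f_n\,u_n,
\]
and Young's inequality with exponents $\frac{2}{q},\frac{2}{2-q}$ turns the first term on the right into $\varepsilon\int|\nabla u_n|^2+C_\varepsilon\int u_n^{2(1-\alpha)/(2-q)}$, where $\frac{2(1-\alpha)}{2-q}>2$ whenever $\alpha<q-1$. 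The exponent on $u_n$ therefore exceeds $2$, and the coercive term $|\lambda|\int u_n^2$ cannot absorb it; the estimate is not self-improving. The same obstruction blocks a direct Stampacchia $L^\infty$ bound. This is precisely why the paper does \emph{not} go this route: it first produces the solution $u_0$ of $(P_0)$ and only then obtains the $L^\infty$ bound for $\lambda<0$ by comparison, using Theorem \ref{comprinc1} to conclude $u_n\le u_0$ (since $-\Delta u_n\le \mu\,|\nabla u_n|^q/u_n^\alpha+f$ for $\lambda<0$). The uniform $H_0^1$ estimate then follows trivially \emph{after} the $L^\infty$ bound is in hand. Your ordering of the estimates is therefore backwards, and the missing ingredient is exactly the reduction to a supersolution coming from the $\lambda=0$ problem.

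Related to this, your proposal is silent on how $(P_0)$ is actually solved when $\alpha>0$, yet that is where the real content lies and it is what makes the $\lambda<0$ comparison available. The paper observes that by Young's inequality
\[
\mu(x)\,\frac{|\xi|^q}{|s|^\alpha}\le C_1\,\frac{|\xi|^2}{|s|^{2\alpha/q}}+C_2,\qquad \frac{2\alpha}{q}<\frac{2(q-1)}{q}<1,
\]
which places the nonlinearity in the admissible class of \cite[Proposition 4.1]{GPS}; this yields a solution to $(P_0)$ without any smallness assumption precisely because $\alpha>0$. In your scheme the smallness condition is portrayed as governing only a ``borderline'' nonsingular bookkeeping issue, but in fact it is an essential hypothesis that is used \emph{solely} to guarantee existence of $u_0$ when $\alpha=0$ (via \cite{FPR}); once $u_0$ exists, nothing further is small. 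Finally, a minor structural difference: you regularize the singularity by replacing $u^\alpha$ with $(u+\frac1n)^\alpha$, whereas the paper keeps the singular term intact and only truncates $f$, relying on \cite{GM} for solvability of the approximating problems; this is workable but unnecessary once the comparison-based $L^\infty$ bound is available.
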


\begin{proof}

The result for $\alpha=0$ and $\lambda\leq 0$ is well-known. Indeed, the existence of solution for $\lambda<0$ is proved in \cite{BMP1, BMP2}, the existence for $\lambda=0$ under the smallness condition is proved in \cite{FPR}, and the uniqueness for $\lambda\leq 0$, in \cite{ACJT2}. Thus, we assume that $\alpha\in(0,q-1)$.

\smallskip

Observe now that, by Young's inequality, there exist $C_1,C_2>0$ such that
\begin{equation}\label{GPSineq}
0\leq\mu(x)\frac{|\xi|^q}{|s|^\alpha}\leq C_1\frac{|\xi|^2}{|s|^{\frac{2\alpha}{q}}}+C_2
\end{equation}
for all $\xi\in\re^N$, for all $s\in\re\setminus\{0\}$ and for a.e. $x\in\Omega$, where 
\begin{equation}\label{GPScond}
\frac{2\alpha}{q}<\frac{2(q-1)}{q}=2-\frac{2}{q}< 1.
\end{equation}
Then, the hypotheses of \cite[Proposition 4.1]{GPS} are fulfilled, so there exists a solution $u_0\in H_0^1(\Omega)\cap L^\infty(\Omega)$ to $(P_0)$ in some weaker sense than Definition \ref{solution}. Nonetheless, since $f\gneqq 0$ in $\Omega$, then the strong maximum principle implies that $u_0>0$ in $\Omega$, so $u_0$ is in fact a solution to \eqref{problem} in the sense of Definition \ref{solution}.

\smallskip

Concerning the existence for $\lambda<0$, we argue by approximation as follows. For all $n\in\na$, let us consider the problem
\begin{equation}\label{approxprob1}
\begin{cases}
\dys-\Delta u_n=\lambda u_n+\mu(x)\frac{|\nabla u_n|^q}{u_n^\alpha}+T_n(f(x))\quad &\text{ in }\Omega,
\\
u_n>0 \quad &\text{ in }\Omega,
\\
u_n=0\quad &\text{ on }\partial\Omega.
\end{cases}
\end{equation}
Since \eqref{GPSineq} and \eqref{GPScond} hold, we know from \cite{GM} that there exists a solution $u_n\in H_0^1(\Omega)\cap L^\infty(\Omega)$ to \eqref{approxprob1} for all $n$. Notice now that 
\[-\Delta u_n\leq\mu(x)\frac{|\nabla u_n|^q}{u_n^\alpha}+f(x)\text{ in }\Omega.\]
Hence, Theorem \ref{comprinc1} applies (see Remark \ref{lambda1remark1}) and yields
\[u_n\leq u_0\leq \|u_0\|_{L^\infty(\Omega)}\text{ in }\Omega.\]
In other words, $\{u_n\}$ is bounded in $L^\infty(\Omega)$. By taking $u_n$ as test function in the weak formulation of \eqref{approxprob1}, we immediately deduce that $\{u_n\}$ is also bounded in $H_0^1(\Omega)$. Hence, there exists $u\in H_0^1(\Omega)\cap L^\infty(\Omega)$ such that, passing to a subseqence, $u_n\rightharpoonup u$ weakly in $H_0^1(\Omega)$ and $u_n\to u$ strongly in $L^p(\Omega)$ for any $p\in [1,\infty)$.

\smallskip

Observe also that, again by comparison, $u_n\geq z$ for all $n$, where $z\in H_0^1(\Omega)\cap L^\infty(\Omega)$ is the unique solution to
\begin{equation*}
\begin{cases}
-\Delta z=\lambda z+T_1(f(x))\quad&\text{in }\Omega,
\\
z=0\quad&\text{on }\partial\Omega.
\end{cases}
\end{equation*}
Now, the strong maximum principle applied on $z$ implies that
\[\forall\omega\subset\subset\Omega\quad\exists c_\omega>0:\quad u_n\geq c_\omega\quad\forall n.\]
Therefore, $\{-\Delta u_n\}$ is bounded in $L^1_{\mbox{\tiny loc}}(\Omega)$. Thus, by virtue of \cite[Theorem 2.1]{BM}, $\nabla u_n\to\nabla u$ strongly in $L^q(\Omega)^N$, up to a subsequence. The convergences we have proved about $\{u_n\}$ and $\{\nabla u_n\}$ are enough to pass to the limit in \eqref{approxprob1}. The proof is standard, we refer to the proof of \cite[Proposition 5.2]{CLLM} for further details. In sum, $u$ is a solution to \eqref{problem}.

\smallskip 

The uniqueness of $u$ is a direct consequence of Theorem \ref{comprinc1} and Remark \ref{lambda1remark1}.
\end{proof}

Next result shows that, if $\alpha=0$, then the existence of solution to $(P_0)$ may fail if $f$ or $\mu$ are too large in some sense, in contrast to the case $\alpha>0$. Thus, the smallness assumption in Proposition \ref{exislambdaneg} is justified. This result is basically contained in \cite[Theorem 2.1]{AP}. We include the statement and proof in our context for completeness.

\begin{proposition}\label{nonexist}
Assume that \eqref{H1} holds with $\alpha=0$, and suppose that \eqref{problem} admits a solution for some $\lambda\geq 0$. Then,
\[\int_\Omega f(x)\phi^{q'}\leq\int_\Omega\frac{|\nabla\phi|^{q'}}{((q-1)\mu(x))^\frac{1}{q-1}}\quad\forall 0\leq\phi\in W_0^{1,q'}(\Omega)\cap L^\infty(\Omega).\]
\end{proposition}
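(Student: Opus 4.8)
The plan is to test the (sub)solution equation against a judiciously chosen power of a test function $\phi\in W_0^{1,q'}(\Omega)\cap L^\infty(\Omega)$, $\phi\geq 0$, and then absorb the gradient term by Young's inequality. Since $u$ solves $(P_\lambda)$ with $\alpha=0$ and $\lambda\geq 0$, and $f,u\geq 0$, $\mu>0$, we have in the weak sense $-\Delta u\geq \mu|\nabla u|^q+f$ (dropping the nonnegative term $\lambda u$). The natural test function is $\psi=\phi^{q'}/(1+\varepsilon u)$ or, more simply in the spirit of the classical argument, $\psi$ chosen so that differentiating produces a term matching $\mu|\nabla u|^q$. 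Concretely, take $\psi=\phi^{q'}$ as test function.

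First I would write, using $\psi=\phi^{q'}$ (which lies in $H_0^1\cap L^\infty$ with compact-support approximation, or directly in the admissible class via Remark \ref{equivalentdef}-type density since $\phi\in W_0^{1,q'}\cap L^\infty$ and the lower order term is under control),
\[
\int_\Omega \nabla u\cdot\nabla(\phi^{q'})\;\geq\;\int_\Omega \mu(x)|\nabla u|^q\,\phi^{q'}+\int_\Omega f(x)\,\phi^{q'}.
\]
The left-hand side is $q'\int_\Omega \phi^{q'-1}\,\nabla u\cdot\nabla\phi$. Now I apply Young's inequality in the form $ab\leq \frac{\mu}{q}|a|^q+\frac{1}{q'\mu^{1/(q-1)}}\cdot\frac{|b|^{q'}}{\,\cdots}$ adapted so that the $|\nabla u|^q\phi^{q'}$ term produced is absorbed by the right-hand side. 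Precisely, writing $q'\phi^{q'-1}\nabla u\cdot\nabla\phi = q'\,(\phi\,|\nabla u|\,\mu^{1/q})\cdot(\phi^{q'-2}\mu^{-1/q}|\nabla\phi|)\cdot(\text{sign factors})$, Young gives
\[
q'\phi^{q'-1}\nabla u\cdot\nabla\phi\;\leq\;\mu\,|\nabla u|^q\phi^{q'}+C_q\,\frac{|\nabla\phi|^{q'}}{\mu^{1/(q-1)}},
\]
with the constant $C_q$ coming out exactly as $((q-1)\mu)^{-1/(q-1)}$ after one checks the exponents: $\phi^{q'-1}$ splits as $\phi^{q'/q}\cdot\phi^{q'/q'-1}$... the bookkeeping is that $\phi$ raised to $q'\cdot\frac1q + (q'-1-\frac{q'}{q})\cdot 1$ must equal $\phi^{q'-1}$, which holds since $q'/q = q'-1$. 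Substituting into the inequality above cancels the $\int\mu|\nabla u|^q\phi^{q'}$ term on both sides and leaves exactly
\[
\int_\Omega f(x)\,\phi^{q'}\;\leq\;\int_\Omega \frac{|\nabla\phi|^{q'}}{((q-1)\mu(x))^{1/(q-1)}},
\]
which is the claim.

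The main obstacle I anticipate is \emph{justifying the test function}: $\phi^{q'}$ need not have compact support, and Definition \ref{solution} only directly allows $C_c^1(\Omega)$ test functions for subsolutions, while here $u$ is a genuine solution so by Remark \ref{equivalentdef} one may use test functions in $H_0^1(\Omega)\cap L^\infty(\Omega)$. One still needs $\phi^{q'}\in H_0^1(\Omega)$: since $\phi\in W_0^{1,q'}(\Omega)\cap L^\infty(\Omega)$ one has $\nabla(\phi^{q'})=q'\phi^{q'-1}\nabla\phi$, and $\phi^{q'-1}\in L^\infty$, so $\phi^{q'}\in W_0^{1,q'}(\Omega)\cap L^\infty(\Omega)$; membership in $H_0^1$ then requires checking $\nabla(\phi^{q'})\in L^2$, which is not automatic for $q'<2$. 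The clean fix is to first prove the inequality for $\phi\in C_c^1(\Omega)$ (or $\phi\in C_c^\infty(\Omega)$), where $\phi^{q'}$ is Lipschitz with compact support hence admissible, and then extend to general $0\leq\phi\in W_0^{1,q'}(\Omega)\cap L^\infty(\Omega)$ by a density/truncation argument: approximate $\phi$ in $W_0^{1,q'}(\Omega)$ by $\phi_n\in C_c^\infty(\Omega)$ with $0\leq\phi_n\leq \|\phi\|_\infty$, pass to the limit in $\int f\phi_n^{q'}$ by dominated convergence (using $f\in L^{p_0}$, $p_0>N/2$, and $\phi_n^{q'}\to\phi^{q'}$ in $L^1$ after a subsequence converging a.e. and boundedly), and in $\int |\nabla\phi_n|^{q'}/((q-1)\mu)^{1/(q-1)}$ using $\mu\geq\mu_0>0$ and $\nabla\phi_n\to\nabla\phi$ in $L^{q'}$. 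A secondary point to double-check is the exact form of Young's inequality yielding the sharp constant $((q-1)\mu)^{-1/(q-1)}$ rather than a larger one, but this is the standard conjugate-exponent computation with $\frac1q+\frac1{q'}=1$ and presents no real difficulty.
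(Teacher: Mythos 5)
Your core argument (test with $\phi^{q'}$, expand $\nabla u\cdot\nabla(\phi^{q'})$, absorb the gradient term by Young's inequality with the sharp constant) is exactly the paper's proof, and your Young's-inequality bookkeeping is correct. However, the ``main obstacle'' you anticipate does not exist: since $1<q<2$, the conjugate exponent satisfies $q'=\frac{q}{q-1}>2$, not $q'<2$ as you write. Consequently $W_0^{1,q'}(\Omega)\hookrightarrow H_0^1(\Omega)$ on the bounded domain, and for $\phi\in W_0^{1,q'}(\Omega)\cap L^\infty(\Omega)$ one has $\nabla(\phi^{q'})=q'\phi^{q'-1}\nabla\phi$ with $\phi^{q'-1}\in L^\infty$ and $\nabla\phi\in L^{q'}\subset L^2$, so $\phi^{q'}\in H_0^1(\Omega)\cap L^\infty(\Omega)$ is directly admissible (this is precisely the one-line observation the paper makes). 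Your density/approximation detour is therefore unnecessary, though it would also work; the rest of your argument is correct and coincides with the paper's.
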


\begin{proof}
Let $u$ be a solution to \eqref{problem}, and let $0\leq\phi\in W_0^{1,q'}(\Omega)\cap L^\infty(\Omega)$. Since $q'>2$, then $\phi^{q'}\in H_0^1(\Omega)\cap L^\infty(\Omega)$, so it can be taken as test function in the weak formulation of \eqref{problem} to obtain, after using Young's inequality, that
\begin{align*}
\int_\Omega\left(\lambda u+\mu(x)|\nabla u|^q+f(x)\right)\phi^{q'}&=\int_\Omega\nabla u\nabla(\phi^{q'})=q'\int_\Omega\phi^{q'-1}\nabla u\nabla\phi
\\
&\leq\int_\Omega\mu(x) |\nabla u|^q \phi^{q'}+\int_\Omega\frac{|\nabla \phi|^{q'}}{((q-1)\mu(x))^\frac{1}{q-1}}.
\end{align*}
Hence, it is now clear that the result follows.
\end{proof}

Our aim in the next two subsections is to prove, for a fixed $\lambda_0>0$, an $L^\infty$ estimate for the solutions to \eqref{problem} for all $\lambda>\lambda_0$. Such an estimate implies that zero is the only possible bifurcation point from infinity to problem \eqref{problem}. This fact will be the key to prove multiplicity of solutions to \eqref{problem} for $\lambda>0$ small enough.

\subsection{A priori $L^p$ estimates}\text{}

\smallskip

This subsection is devoted to proving an $L^p$ estimate on the supersolutions to \eqref{problem} for $\lambda>0$. The techniques employed here have been taken from \cite{Souplet}.

\smallskip

The first result of the subsection provides an apparently weak local estimate on the solutions to \eqref{problem}. Notwithstanding, this is the starting point for proving the $L^\infty$ estimate we are aiming at. Concerning the proof, we will argue similarly as in Proposition \ref{nonexist}.

\begin{lemma}\label{locallemma}
Assume that \eqref{H1} holds. Then, for every $\lambda_0>0$ and $\omega\subset\subset\Omega$ there exists $C>0$ such that 
\begin{equation}\label{localbound}
\int_\omega u \leq C.
\end{equation}
for every supersolution $u$ to \eqref{problem} with $\lambda>\lambda_0$. 
\end{lemma}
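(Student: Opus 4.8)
The goal is a local $L^1$-estimate on supersolutions $u$ to $(P_\lambda)$, uniform over $\lambda > \lambda_0 > 0$. The strategy, following \cite{Souplet} and the argument of Proposition \ref{nonexist}, is to use a suitable power of a cutoff function as a test function and exploit the superlinear homogeneity together with the sign of $\lambda$. Fix $\omega \subset\subset \Omega$ and pick an open set $\omega'$ with $\omega \subset\subset \omega' \subset\subset \Omega$. Choose $\psi \in C_c^\infty(\Omega)$ with $0 \leq \psi \leq 1$, $\psi \equiv 1$ on $\omega$ and $\supp \psi \subset \omega'$. The test function will be $\phi = \psi^{q'}$, which lies in $C_c^1(\Omega)$ and hence is admissible in the (reverse) inequality defining a supersolution, since $q' > 2$. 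By the discussion in Remark \ref{lambda1remark1} and the existence theory, one may also assume $u$ is positive on compact subsets of $\Omega$ and belongs to $W^{1,N}_{\mbox{\tiny loc}}(\Omega)$, so all integrals below are finite.

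Testing the supersolution inequality with $\phi = \psi^{q'}$ gives
\begin{equation*}
\lambda \int_\Omega u \psi^{q'} + \int_\Omega \mu(x)\frac{|\nabla u|^q}{u^\alpha}\psi^{q'} + \int_\Omega f(x)\psi^{q'} \leq \int_\Omega \nabla u \nabla(\psi^{q'}) = q' \int_\Omega \psi^{q'-1} \nabla u \nabla \psi.
\end{equation*}
Now I would absorb the right-hand side into the gradient term on the left. Writing $\nabla u \nabla \psi = \big(u^{-\alpha/q}\nabla u \, \psi^{(q-1)/q\cdot q'/?}\big)\cdot(\cdots)$ — more precisely, split $\psi^{q'-1}\nabla u \nabla \psi = \big(\mu^{1/q} u^{-\alpha/q}|\nabla u|\,\psi^{q'/q'}\big)$-type factors and apply Young's inequality with exponents $q$ and $q'$ to get, for any $\eta>0$,
\begin{equation*}
q'\int_\Omega \psi^{q'-1}|\nabla u||\nabla \psi| \leq \eta \int_\Omega \mu(x)\frac{|\nabla u|^q}{u^\alpha}\psi^{q'} + C(\eta)\int_\Omega \frac{u^{\alpha/(q-1)}}{\mu(x)^{1/(q-1)}}|\nabla \psi|^{q'}.
\end{equation*}
Choosing $\eta = 1$ (or $\eta$ small) absorbs the gradient term, and using $\mu \geq \mu_0$ and $f \geq 0$ we arrive at
\begin{equation*}
\lambda \int_\Omega u \psi^{q'} \leq C \int_{\omega'} u^{\alpha/(q-1)}|\nabla \psi|^{q'},
\end{equation*}
with $C$ depending only on $q$, $\mu_0$, and $\psi$.

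The point now is that the exponent on the right is $\alpha/(q-1) < 1$ since $\alpha < q-1$, i.e., the right-hand side is \emph{sublinear} in $u$, while the left-hand side is linear. So by Young's (or H\"older's) inequality, $u^{\alpha/(q-1)} \leq \epsilon u + C(\epsilon)$ pointwise; integrating over $\omega'$ and taking $\epsilon$ small relative to $\lambda_0 \leq \lambda$ (this is exactly where $\lambda > \lambda_0 > 0$ is used, to keep the coefficient of $\int u$ on the left positive and bounded below), we absorb $\epsilon \int_{\omega'} u|\nabla\psi|^{q'}$ — here one should be slightly careful and instead choose $\psi$ so that $|\nabla\psi|$ is bounded and run the absorption on $\int_{\omega'} u\psi^{q'}$ after enlarging the support structure, or simply iterate on a nested chain $\omega \subset\subset \omega' \subset\subset \omega'' \subset\subset \Omega$. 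Concretely: $\lambda \int_{\omega} u \leq \lambda \int_\Omega u\psi^{q'} \leq C\int_{\omega'} u^{\alpha/(q-1)} \leq C|\omega'|^{1-\alpha/(q-1)}\big(\int_{\omega'} u\big)^{\alpha/(q-1)}$, but this still has $\int_{\omega'}u$ on the right, so a genuine a priori bound on $\int_{\omega'} u$ is needed — which we do not yet have. The clean fix is to first prove the estimate assuming $u \in L^1_{\mbox{\tiny loc}}$, then bootstrap: since the exponent $\alpha/(q-1) < 1$, the inequality $\lambda \int_\omega u \leq C(\int_{\omega'} u)^{\alpha/(q-1)}$ combined with a covering/scaling argument (as in \cite{Souplet}) forces $\int_{\omega'} u$ to be bounded by a constant depending only on the data, not on $u$ or $\lambda$. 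I expect \textbf{this last absorption/bootstrap step to be the main obstacle}: making the constant genuinely independent of both $u$ and $\lambda>\lambda_0$ requires either the scaling trick from \cite{Souplet} or a careful nested-domain iteration exploiting that $\alpha/(q-1)<1$ makes the map $t \mapsto Ct^{\alpha/(q-1)}$ eventually below $t/2$. Everything else — the choice of test function, Young's inequality, using $\mu\geq\mu_0$ and $f\geq 0$ — is routine.
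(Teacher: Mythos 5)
Your plan is on the right track — test with a power of a cutoff, apply Young's inequality twice, and use $\lambda>\lambda_0>0$ to absorb — but the choice $\phi=\psi^{q'}$ is a genuine error that makes the absorption impossible, and this is exactly the obstruction you flag at the end. After your first Young's inequality, the leftover term is $C\int_\Omega u^{\alpha/(q-1)}|\nabla\psi|^{q'}$, with no surviving power of $\psi$ to match the $\psi^{q'}$ weight on the left. If you now try to split this by the weighted Young inequality $u^{\alpha/(q-1)}|\nabla\psi|^{q'}=(u\psi^{q'})^{\alpha/(q-1)}\cdot\psi^{-q'\alpha/(q-1)}|\nabla\psi|^{q'}\leq \varepsilon\, u\psi^{q'}+C_\varepsilon\,\psi^{-q'\alpha/(q-1-\alpha)}|\nabla\psi|^{q/(q-1-\alpha)}$, the remainder blows up as $\psi\to0$ on $\supp(\nabla\psi)$, so it is not bounded. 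Your fallback (the "pointwise $u^{\alpha/(q-1)}\leq\epsilon u+C_\epsilon$ plus bootstrap/covering") is correctly diagnosed by yourself as incomplete: it leaves $\int_{\omega'}u$ on both sides with mismatched weights, and no scaling or nested-chain iteration is supplied that actually closes the gap.

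The fix in the paper is much simpler and purely local: take the test function $\phi^\beta$ with $\beta=\frac{q}{q-1-\alpha}$ (note $\beta>q'$ since $\alpha>0$ can be excluded, and more importantly $\beta>q'$ because $q-1-\alpha<q-1$). With this choice, the first Young leaves the term $C\int_\Omega\phi^{\beta-q'}|\nabla\phi|^{q'}u^{\alpha/(q-1)}$ (well-defined since $\beta\geq q'$), and the second Young, splitting off the factor $(u\phi^\beta)^{\alpha/(q-1)}$ so as to produce $\tfrac{\lambda_0}{2}u\phi^\beta$ on one side, leaves a remainder of the form $C\,\phi^{\beta-\frac{q}{q-1-\alpha}}|\nabla\phi|^{\frac{q}{q-1-\alpha}}=C|\nabla\phi|^{\frac{q}{q-1-\alpha}}$, which is bounded because the exponent of $\phi$ is exactly zero. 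One then absorbs $\tfrac{\lambda_0}{2}\int u\phi^\beta$ into $\lambda\int u\phi^\beta\geq\lambda_0\int u\phi^\beta$ and concludes $\int_\omega u\leq\int_\Omega u\phi^\beta\leq C$. So the missing idea is not a bootstrap but simply tuning the exponent $\beta$ on the cutoff so that the negative powers of $\phi$ generated by the two Young steps cancel exactly; the hypothesis $\alpha<q-1$ is what makes $\beta=\frac{q}{q-1-\alpha}$ a finite number, and the factor $\phi^\beta$ riding along with $u$ in the weighted Young is what makes the absorption pointwise and painless.
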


\begin{proof}
Let $\phi\in C_c^1(\Omega)$ be such that $\omega\subset\subset\supp{(\phi)}$, $0\leq\phi\leq 1$ in $\Omega$ and $\phi=1$ in $\omega$. Taking $\phi^{\beta}\in C_c^1(\Omega)$ for some $\beta>1$ as test function in \eqref{problem} and using Young's inequality twice we obtain that
\begin{align*}
\int_\Omega\left(\lambda u+\mu(x)\frac{|\nabla u|^q}{u^\alpha}+f(x)\right)\phi^{\beta}&\leq\int_\Omega\nabla u\nabla(\phi^{\beta})=\beta\int_\Omega\phi^{\beta-1}\nabla u\nabla\phi
\\
&\leq\frac{\mu_0}{2}\int_\Omega\frac{|\nabla u|^q}{u^\alpha}\phi^{\beta}+C\int_\Omega\frac{|\nabla(\phi^\beta)|^{q'}}{\phi^{\beta(q'-1)}}u^{\frac{\alpha}{q-1}}
\\
&\leq \frac{\mu_0}{2}\int_\Omega\frac{|\nabla u|^q}{u^\alpha}\phi^{\beta}+\frac{\lambda_0}{2}\int_\Omega u\phi^\beta + C\int_\Omega\left(\frac{|\nabla\phi|}{\phi}\right)^{\frac{q}{q-1-\alpha}}\phi^\beta.
\end{align*}
Taking $\beta=\frac{q}{q-1-\alpha}$, the last term in the previous inequality is bounded. Therefore, 
\begin{equation*}
\int_\Omega\left(\lambda_0 u+\mu_0\frac{|\nabla u|^q}{u^\alpha}+f(x)\right)\phi^\beta\leq \frac{\mu_0}{2}\int_\Omega\frac{|\nabla u|^q}{u^\alpha}\phi^{\beta}+\frac{\lambda_0}{2}\int_\Omega u\phi^\beta + C,
\end{equation*}
so \eqref{localbound} follows by taking into account that $\phi=1$ in $\omega$.
\end{proof}

The following is a slightly more general version of \cite[Lemma 3.2]{BC}.

\begin{lemma}\label{BClemma}
Let $\Omega\subset\mathbb{R}^N$ be a bounded domain with boundary of class $\mathcal{C}^2$, and let $0\leq h\in L^1(\Omega)$ and $v\in H^1(\Omega)$ be such that $v^-\in H_0^1(\Omega)$ and $-\Delta v\geq h$ in $\Omega$. Then, there exists a constant $C>0$ depending only on $\Omega$ such that
\[\frac{v}{\delta}\geq C \int_\Omega \delta h\quad \text{ a.e. in }\Omega.\]
\end{lemma}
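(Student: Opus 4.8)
The plan is to reduce the statement to a lower bound on the Green function of the Laplacian on $\Omega$. Since $v^- \in H_0^1(\Omega)$ and $-\Delta v \geq h$ in $\Omega$ in the weak sense with $h \geq 0$, I would first split $v = v_0 + z$, where $z \in H_0^1(\Omega)$ solves $-\Delta z = h$ in $\Omega$ with homogeneous Dirichlet data, and $v_0 = v - z$ satisfies $-\Delta v_0 \geq 0$ in $\Omega$ with $v_0^- \in H_0^1(\Omega)$ (using that $z$ has zero boundary trace). By the weak maximum principle $v_0 \geq 0$ a.e. in $\Omega$, so it suffices to prove the estimate for $z$ alone, i.e. to show $z/\delta \geq C \int_\Omega \delta h$ a.e. in $\Omega$.

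Next I would represent $z$ via the Green function: $z(x) = \int_\Omega G(x,y) h(y)\, dy$ for a.e. $x \in \Omega$. The classical Hopf-type lower bound for the Green function of a $\mathcal{C}^2$ bounded domain states that there is a constant $c = c(\Omega) > 0$ with
\[
G(x,y) \geq c\, \delta(x)\, \delta(y) \qquad \text{for all } x,y \in \Omega.
\]
(This is standard; it follows, for instance, from the two-sided Green function estimates on smooth domains, or directly from Hopf's lemma applied to $G(\cdot,y)$ together with interior estimates, and a compactness argument to make the constant uniform in $y$.) Plugging this into the representation formula gives
\[
z(x) \geq c\, \delta(x) \int_\Omega \delta(y) h(y)\, dy \qquad \text{for a.e. } x \in \Omega,
\]
and dividing by $\delta(x)$ yields $z/\delta \geq c \int_\Omega \delta h$ a.e. Combined with $v = v_0 + z \geq z$, this proves the lemma with $C = c$.

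The main point requiring care — and the only real obstacle — is justifying the Green function lower bound $G(x,y) \geq c\,\delta(x)\delta(y)$ with a constant depending only on $\Omega$, and legitimizing the Green representation for merely $L^1$ data $h$ (the solution $z$ is then only in $W_0^{1,r}(\Omega)$ for $r < N/(N-1)$, in particular $z/\delta$ need only be interpreted a.e., which is exactly the conclusion's setting). For the representation one approximates $h$ by $h_n = T_n(h) \in L^\infty(\Omega)$, for which $z_n(x) = \int_\Omega G(x,y) h_n(y)\,dy$ is classical, passes to the limit using monotone convergence on the right-hand side and the stability of $z_n \to z$ in $L^1(\Omega)$ (or in $W_0^{1,r}$), and keeps the inequality $z_n(x) \geq c\,\delta(x)\int_\Omega \delta h_n$, which is preserved under these limits. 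For the Green function bound one cites the standard sharp estimates on smooth domains (e.g. via the distance-function barriers $\delta$ and $\delta^{1/2}$, or Widman/Zhao-type estimates); since $\partial\Omega \in \mathcal{C}^2$ this is available off the shelf, and the constant is uniform because $\Omega$ is fixed. Everything else is routine.
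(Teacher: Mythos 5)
Your approach is correct and is essentially the paper's: truncate $h$ to $T_n(h)$, establish the pointwise lower bound for the resulting Dirichlet solution via the Green function estimate $G(x,y)\geq c\,\delta(x)\delta(y)$ (which is exactly what \cite[Lemma 3.2]{BC}, the reference the paper invokes, provides), compare with $v$ by the weak maximum principle, and let $n\to\infty$. The only slight imprecision is the opening claim that $z\in H_0^1(\Omega)$ for $L^1$ data $h$ and the ensuing maximum-principle step for $v_0=v-z$ when $z\notin H^1$; you flag this yourself, and the truncation scheme in your last paragraph sidesteps it entirely.
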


\begin{proof}
Let us consider the following problem for all $n\in\mathbb{N}$:

\begin{equation*}
\begin{cases}
-\Delta v_n=T_n(h(x)),&x\in\Omega,
\\
v_n=0, &x\in\partial\Omega,
\end{cases}
\end{equation*}
It is well-known that it has a unique solution $v_n\in C_0^{1,\nu}(\overline{\Omega})$ for all $\nu\in (0,1)$. Moreover, \cite[Lemma 3.2]{BC} implies that
\[v_n(x)\geq C\delta(x)\int_\Omega\delta T_n(h)\quad \forall x\in\Omega,\]
for some $C>0$ depending only on $\Omega$. In particular, it does not depend on $n$.

On the other hand, by comparison, it is clear that $v_n\leq v$ a.e. in $\Omega$, so
\[v\geq C\delta\int_\Omega\delta T_n(h)\quad\text{a.e. in }\Omega.\]
We conclude the proof by letting $n$ tend to infinity.
\end{proof}

Next lemma is an immediate consequence of Lemma \ref{BClemma}.

\begin{lemma}\label{BCconsequence}
Assume that \eqref{H1} holds. Then, there exists $C>0$ such that
\begin{equation}\label{breziscabrebound}
u(x)\geq C\delta(x)\int_\Omega\left(\lambda u+\mu(y)\frac{|\nabla u|^q}{u^\alpha}+f(y)\right)\delta(y)dy\quad\text{a.e. } x\in\Omega,
\end{equation}
for every supersolution $u$ to \eqref{problem} with $\lambda> 0$.
\end{lemma}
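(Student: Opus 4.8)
The plan is to deduce Lemma \ref{BCconsequence} directly from Lemma \ref{BClemma} by choosing the right $v$ and $h$. Given a supersolution $u$ to \eqref{problem} with $\lambda>0$, set
\[
h(y)=\lambda u(y)+\mu(y)\frac{|\nabla u(y)|^q}{u(y)^\alpha}+f(y).
\]
Since $u>0$ a.e. and $\mu,f\geq 0$, $\lambda>0$, we have $h\geq 0$; moreover $h\in L^1(\Omega)$ because $u\in L^\infty(\Omega)\subset L^1(\Omega)$, $f\in L^{p_0}(\Omega)\subset L^1(\Omega)$, and $\mu\frac{|\nabla u|^q}{u^\alpha}\in L^1(\Omega)$ — for this last point one uses Remark \ref{equivalentdef}, which allows test functions in $H_0^1(\Omega)\cap L^\infty(\Omega)$; taking $\phi\equiv$ a fixed function (or $\phi = u$ itself, or rather testing the supersolution inequality appropriately) shows the gradient term is globally integrable, not merely locally. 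Alternatively, if one prefers to avoid that subtlety, note that a supersolution satisfies $-\Delta u\geq h$ in the distributional sense on $\Omega$ with $h\in L^1_{\mathrm{loc}}$, and the monotone truncation argument inside the proof of Lemma \ref{BClemma} only ever needs $T_n(h)\in L^1(\Omega)$, which holds since $T_n(h)\leq n$; so in fact the hypothesis ``$h\in L^1(\Omega)$'' can be relaxed to ``$h\in L^1_{\mathrm{loc}}(\Omega)$, $h\geq 0$'' at the cost of interpreting $\int_\Omega\delta h$ as a possibly-infinite quantity, and the conclusion still reads $\frac v\delta\geq C\int_\Omega\delta h$.

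Next I would check that $u$ plays the role of $v$ in Lemma \ref{BClemma}: we need $u\in H^1(\Omega)$ (true, since $u\in H^1(\Omega)\cap L^\infty(\Omega)$ by Definition \ref{solution} — a supersolution lies in $H^1(\Omega)$), $u^-\in H_0^1(\Omega)$ (true, and in fact $u^-\equiv 0$ since $u>0$ a.e.), and $-\Delta u\geq h$ in $\Omega$ (this is precisely the supersolution inequality from Definition \ref{solution}, using Remark \ref{equivalentdef} to test against arbitrary $0\leq\phi\in H_0^1(\Omega)\cap L^\infty(\Omega)$, in particular against $\phi\in C_c^\infty(\Omega)$, which gives the distributional inequality). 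Then Lemma \ref{BClemma} yields
\[
\frac{u(x)}{\delta(x)}\geq C\int_\Omega \delta(y)h(y)\,dy\quad\text{a.e. }x\in\Omega,
\]
with $C>0$ depending only on $\Omega$. Multiplying through by $\delta(x)$ and substituting the definition of $h$ gives exactly \eqref{breziscabrebound}. The constant is uniform over all supersolutions and over all $\lambda>0$ because the constant in Lemma \ref{BClemma} depends only on $\Omega$.

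The one genuine point to be careful about — and the only place where this is not a mechanical transcription — is the integrability and the meaning of the right-hand side: one must make sure that $\int_\Omega \delta\,\mu\frac{|\nabla u|^q}{u^\alpha}\,dy$ is finite (or at least well-defined in $[0,+\infty]$) so that the inequality is not vacuous. This is handled by the weak formulation: since $\delta$ (or a smooth function comparable to it near $\partial\Omega$) is an admissible nonnegative test function of class $H_0^1(\Omega)\cap L^\infty(\Omega)$, testing the supersolution inequality against it bounds $\int_\Omega\delta\,\mu\frac{|\nabla u|^q}{u^\alpha}$ by $\int_\Omega\nabla u\nabla\delta + $ (nonnegative lower-order contributions subtracted), which is finite because $u\in H^1(\Omega)$. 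Hence $h\in L^{1,\delta}(\Omega)$ in the notation of the paper, the right-hand side of \eqref{breziscabrebound} is a finite nonnegative number, and the lemma is proved.
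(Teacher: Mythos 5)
Your proof is correct and takes the paper's route: Lemma~\ref{BCconsequence} is deduced by applying Lemma~\ref{BClemma} with $v=u$ and $h=\lambda u+\mu|\nabla u|^q/u^\alpha+f$, noting $u^-\equiv 0$. One caveat: your primary attempt to verify the hypothesis $h\in L^1(\Omega)$ is not justified. Testing the supersolution inequality against any admissible $0\leq\phi\in H_0^1(\Omega)\cap L^\infty(\Omega)$ (which necessarily vanishes on $\partial\Omega$) can only give $\mu|\nabla u|^q/u^\alpha\in L^{1,\delta}(\Omega)$, not $L^1(\Omega)$; and $\phi=u$ is inadmissible because a supersolution lies only in $H^1(\Omega)$, not in $H_0^1(\Omega)$. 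The argument you present as the ``alternative'' is therefore the one that actually closes the gap: the monotone truncation inside the proof of Lemma~\ref{BClemma} works for any measurable $h\geq 0$ with $h\in L^1_{\mathrm{loc}}(\Omega)$ and yields $v/\delta\geq C\int_\Omega\delta h$ with the right-hand side a priori in $[0,+\infty]$; since $u\in L^\infty(\Omega)$ is finite a.e., the inequality then forces $\int_\Omega\delta h<\infty$ a posteriori. Your final paragraph (testing against $\delta$ or $\varphi_1$ to show $h\in L^{1,\delta}(\Omega)$ directly) is another valid way to see this finiteness. So the substance is right; just promote the ``alternative'' to the main line and drop the unsupported claim of global $L^1(\Omega)$ integrability.
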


Combining Lemmas \ref{locallemma} and \ref{BCconsequence} we obtain in the following result some estimates in weighted Lebesgue spaces. 

\begin{lemma}\label{lpdeltaestimates}
Assume that \eqref{H1} holds. Then, for every $\lambda_0>0$ there exists $C>0$ such that
\begin{enumerate}
\item\label{ubound}$\|u\|_{L^{p,\delta}(\Omega)}\leq C\quad\forall p\in\left[1,\frac{N+1}{N-1}\right)$,
\\
\item\label{gradbound} $\left\|\frac{|\nabla  u|^q}{u^{\alpha}}\right\|_{L^{1,\delta}(\Omega)}=C\||\nabla u^{1-\frac{\alpha}{q}}|\|_{L^{q,\delta}(\Omega)}\leq C$,
\end{enumerate}
for every supersolution $u$ to \eqref{problem} with $\lambda>\lambda_0$.
\end{lemma}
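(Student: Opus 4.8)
The plan is to bootstrap the crude local estimate of Lemma \ref{locallemma} into the weighted estimates (1)--(2) by iterating the pointwise lower bound in Lemma \ref{BCconsequence}. First I would introduce the abbreviation
\[
M(u)=\int_\Omega\Big(\lambda u+\mu(y)\frac{|\nabla u|^q}{u^\alpha}+f(y)\Big)\delta(y)\,dy,
\]
so that Lemma \ref{BCconsequence} reads $u(x)\ge C\,\delta(x)\,M(u)$ a.e.\ in $\Omega$. The key observation is that $M(u)$ appears on \emph{both} sides of the estimate once we combine this with the local bound: integrating $u\ge C\delta M(u)$ over a fixed $\omega\subset\subset\Omega$ and using Lemma \ref{locallemma} gives $C\,M(u)\int_\omega\delta\le\int_\omega u\le C$, hence $M(u)\le C$ with $C$ depending only on $\lambda_0$, $\omega$ and $\Omega$. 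This already yields a first nontrivial piece: since each of the three summands in $M(u)$ is nonnegative, we get $\big\|\frac{|\nabla u|^q}{u^\alpha}\big\|_{L^{1,\delta}(\Omega)}\le C$, and also $\int_\Omega u\,\delta\le C$, i.e.\ the case $p=1$ of (1). The identity $\frac{|\nabla u|^q}{u^\alpha}=c\,|\nabla u^{1-\alpha/q}|^q$ (valid since $u>0$ locally and $u^{1-\alpha/q}\in W^{1,1}_{\mathrm{loc}}$, with $0<1-\alpha/q<1$) gives the stated reformulation of (2) with $c=(1-\alpha/q)^{-q}$.

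For the full range $p\in[1,\frac{N+1}{N-1})$ in (1), the idea is to use the elliptic regularity estimate in weighted spaces for the right-hand side $F:=\lambda u+\mu\frac{|\nabla u|^q}{u^\alpha}+f$. We have just shown $F\,\delta\in L^1(\Omega)$ with norm bounded by $C$ (uniformly for $\lambda>\lambda_0$; note $\|f\|_{L^{p_0}}\le\|f\delta^{-1}\|\cdots$ — more simply, $\int_\Omega f\delta\le\|f\|_{L^{p_0}}\|\delta\|_{L^{p_0'}}\le C$). Now $u$ is the solution of $-\Delta u=F$ with $u=0$ on $\partial\Omega$ (in the supersolution sense, $-\Delta u\ge F$, but one can compare with the solution $\tilde u$ of $-\Delta\tilde u=F$, $\tilde u|_{\partial\Omega}=0$, so $u\ge\tilde u\ge 0$, and since $u\in H^1(\Omega)$ with appropriate boundary behaviour one in fact gets control of $u$ itself; alternatively work directly via the representation $u(x)\ge\int_\Omega G(x,y)F(y)\,dy$). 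The classical estimate — going back to the duality/Green-function arguments used in \cite{Souplet} and \cite{BC} — states that if $g\,\delta\in L^1(\Omega)$ then the solution of $-\Delta w=g$, $w|_{\partial\Omega}=0$ satisfies $w\in L^{p,\delta}(\Omega)$ for every $p<\frac{N+1}{N-1}$ with $\|w\|_{L^{p,\delta}}\le C_p\|g\delta\|_{L^1}$. Applying this with $g=F$ yields (1) in the full range.

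The step I expect to be the main obstacle is making rigorous the passage from the \emph{supersolution} inequality $-\Delta u\ge F$ to a quantitative \emph{upper} bound on $u$ in $L^{p,\delta}$: a supersolution can a priori be arbitrarily large, so one genuinely needs the \emph{local} bound of Lemma \ref{locallemma} (which caps $\int_\omega u$) together with the \emph{global} lower bound of Lemma \ref{BCconsequence} to trap $M(u)$ — and then one must feed $M(u)\le C$ back in. Concretely, once $M(u)\le C$ is known, $F\delta\in L^1$ with uniform norm, and then $u$ is bounded above by $\tilde u$ plus a harmonic function vanishing at the boundary, so the sharp weighted-$L^p$ estimate for $-\Delta$ applies. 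The delicate bookkeeping is checking that every constant produced is independent of $\lambda$ in the range $\lambda>\lambda_0$ (which it is, since $\lambda u\ge 0$ only helps, and the only place $\lambda$ enters is through $\lambda_0$ in Lemma \ref{locallemma}), and verifying the chain-rule identity $\mu\frac{|\nabla u|^q}{u^\alpha}=\mu(1-\alpha/q)^{-q}|\nabla u^{1-\alpha/q}|^q$ holds a.e., which follows from $u\in W^{1,N}_{\mathrm{loc}}(\Omega)$ and $u>0$ on compact subsets (Remark \ref{lambda1remark1}), extended to the weighted $L^1$ statement by monotone convergence on $\{u>1/n\}$.
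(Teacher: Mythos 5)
Your overall strategy is exactly the one the paper follows: combine Lemma~\ref{BCconsequence} with Lemma~\ref{locallemma} to get $M(u)\le C$, read off item~(2) from that (together with the pointwise identity $\mu\frac{|\nabla u|^q}{u^\alpha}=\mu(1-\alpha/q)^{-q}|\nabla u^{1-\alpha/q}|^q$), and then invoke a weighted-$L^p$ elliptic regularity estimate for item~(1). The paper cites \cite[Proposition~2.2]{FSW} for precisely the estimate you describe, so you have also identified the right key lemma.

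However, your argument for item~(1) has a real gap. You propose to compare $u$ from below with the solution $\tilde u$ of $-\Delta\tilde u=F$, $\tilde u|_{\partial\Omega}=0$, obtain $\|\tilde u\|_{L^{p,\delta}}\le C$ from the weighted estimate applied with $g=F$, and conclude. But the comparison gives $u\ge\tilde u$, which is an inequality in the \emph{wrong} direction: a bound on $\tilde u$ says nothing about $u$ from above. The remarks you add ("one in fact gets control of $u$ itself", "alternatively work directly via the representation $u(x)\ge\int_\Omega G(x,y)F(y)\,dy$") do not fix this, since they are again lower bounds. What is actually needed, and what the paper does, is to establish $\|\Delta u\|_{L^{1,\delta}(\Omega)}\le C$ for the function $u$ \emph{itself} and then apply \cite[Proposition~2.2]{FSW} directly to $u$. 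For a solution this is immediate, since $-\Delta u=F$ so $\int_\Omega(-\Delta u)\delta=M(u)\le C$; for a supersolution the same bound follows by running the Brezis--Cabr\'e inequality (Lemma~\ref{BClemma}/\ref{BCconsequence}) with $h=-\Delta u$ in place of $F$ and integrating over $\omega$, exactly as you did to bound $M(u)$. Once $\|\Delta u\|_{L^{1,\delta}}\le C$ is in hand, the FSW estimate applied to $u$ gives $\|u\|_{L^{p,\delta}}\le C$ for all $p<\frac{N+1}{N-1}$; there is no need for the auxiliary $\tilde u$ at all, and the comparison step should be deleted.
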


\begin{proof}
Integrating both sides of inequality \eqref{breziscabrebound} over any open set $\omega\subset\subset\Omega$ and using the estimate \eqref{localbound} we deduce that
\[\int_\Omega(-\Delta u)\delta=\int_\Omega\left(\lambda u+\mu(x)\frac{|\nabla u|^q}{u^\alpha}+f(x)\right)\delta\leq C\left(\int_\omega u\right)\left(\int_\omega\delta\right)^{-1}\leq C.\]
In particular,
\[\int_\Omega\frac{|\nabla u|^q}{u^\alpha}\delta\leq C,\]
and this is equivalent to item \eqref{gradbound}. Regarding item \eqref{ubound}, observe that 
\[\|\Delta u\|_{L^{1,\delta}(\Omega)}\leq C.\]
Hence, by \cite[Proposition 2.2]{FSW} we obtain directly item \eqref{ubound}.
\end{proof}

We finish the subsection with the best $L^p$ estimate for supersolutions that we obtain with these techniques.

\begin{lemma}\label{lebesguelemma}
Assume that \eqref{H1} holds. Then, for every $\lambda_0>0$ there exists $C>0$ such that
\begin{equation}\label{lebesgueestimate}
\|u\|_{L^m(\Omega)}\leq C
\end{equation}
for every supersolution $u$ to \eqref{problem} with $\lambda>\lambda_0$, where $m=\frac{(q-\alpha)N}{N-q+1}\in (q-\alpha,(q-\alpha)^*)$.
\end{lemma}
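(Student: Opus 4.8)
The plan is to upgrade the weighted estimate of Lemma~\ref{lpdeltaestimates} to a genuine (unweighted) $L^m$ estimate by means of a Marcinkiewicz-type argument based once more on the pointwise bound \eqref{breziscabrebound} together with the regularizing properties of the Green operator against the weight $\delta$. First I would set $h=\lambda u+\mu\frac{|\nabla u|^q}{u^\alpha}+f\ge 0$, so that by \eqref{breziscabrebound} we already know $u\ge C\,\delta\,\|h\|_{L^{1,\delta}(\Omega)}$ and, crucially, from Lemma~\ref{lpdeltaestimates}\eqref{gradbound} that $\|h\|_{L^{1,\delta}(\Omega)}\le C$ uniformly in the supersolution (for $\lambda>\lambda_0$). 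The idea is that $-\Delta u=h$ with $h\in L^{1,\delta}(\Omega)$ forces $u$ to lie in the Marcinkiewicz space $\mathcal{M}^{\frac{N+1}{N-1}}(\Omega)$ (indeed $\|u\|_{\mathcal{M}^{(N+1)/(N-1)}}\le C\|h\|_{L^{1,\delta}}$, which is the sharp endpoint behind item \eqref{ubound}), so the real content is to turn the integrability of $h$ against $\delta$ into integrability of $h$ against $u^{\text{something}}$, and then run a self-improving iteration.

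Concretely, I would test the equation $-\Delta u=h$ against a power of $u$, or rather exploit the structure $h\ge \mu_0\frac{|\nabla u|^q}{u^\alpha}$. Writing $v=u^{1-\frac{\alpha}{q}}$, item \eqref{gradbound} says $\|\nabla v\|_{L^{q,\delta}(\Omega)}\le C$; combining this with a weighted Sobolev (or Hardy) inequality on the bounded smooth domain $\Omega$ — of the form $\|v\|_{L^{r,\delta}(\Omega)}\le C\|v\|_{W^{1,q}_\delta}$ with $r$ the corresponding weighted Sobolev exponent — yields a bound on $u$ in a weighted Lebesgue space with a \emph{larger} exponent than what Lemma~\ref{lpdeltaestimates} gave. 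Feeding this improved integrability of $u$ back into the definition of $h$ (the term $\lambda u\,\delta$ now integrates to a better power) and re-applying \cite[Proposition 2.2]{FSW} / the Green-function estimate $\|G_\Omega h\|_{L^{p}(\Omega)}\le C\|h\|_{L^{s,\delta}(\Omega)}$ with the exponent relation $\frac1p=\frac1s-\frac2N+\frac1N$ (the extra $\frac1N$ coming from the $\delta$ weight), one closes a finite bootstrap. Tracking the exponents through this loop, the fixed point of the iteration is exactly $m=\frac{(q-\alpha)N}{N-q+1}$, and one checks directly the inclusion $m\in(q-\alpha,(q-\alpha)^*)$, where $(q-\alpha)^*=\frac{N(q-\alpha)}{N-(q-\alpha)}$, using $0\le\alpha<q-1<q<2\le N$.

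I would carry out the steps in this order: (i) record $h\ge0$, $\|h\|_{L^{1,\delta}}\le C$ and the pointwise lower bound $u\ge C\delta\|h\|_{L^{1,\delta}}$; (ii) state the precise weighted Calderón–Zygmund / Green estimate $\|(-\Delta)^{-1}g\|_{L^{p}(\Omega)}\le C\|g\|_{L^{s,\delta}(\Omega)}$ to be used (citing \cite{FSW} or deriving it from Lemma~\ref{BClemma}-type bounds and duality), together with the admissible range of $(s,p)$; (iii) start the iteration from $u\in L^{p_0,\delta}$ with $p_0$ just below $\frac{N+1}{N-1}$ (Lemma~\ref{lpdeltaestimates}\eqref{ubound}) and the gradient bound \eqref{gradbound}, produce via the weighted Sobolev inequality an improved weighted $L^p$ bound on $u$; (iv) re-insert into $h=\lambda u+\mu\frac{|\nabla u|^q}{u^\alpha}+f$, use $f\in L^{p_0}(\Omega)$ with $p_0>\frac N2$ (hence $f\delta\in L^{s}$ comfortably) and the previous step, and apply (ii) to get a better $L^p(\Omega)$ bound on $u$; (v) iterate finitely many times and identify the limiting exponent $m$; (vi) verify $m\in(q-\alpha,(q-\alpha)^*)$.

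The main obstacle I expect is bookkeeping the exponents so that each pass of the bootstrap is a genuine strict improvement that nevertheless terminates at the stated $m$ (rather than overshooting or stalling): one must check that the map $p\mapsto p'$ defined by the composition ``weighted Sobolev on $u^{1-\alpha/q}$'' followed by ``weighted Green estimate on $\lambda u\,\delta$'' is a contraction toward $m$ over the relevant range, and that the competing term $f\in L^{p_0}$ never becomes the bottleneck — this is presumably where the constraint $q>\frac{N}{N-1}$ (and, later, the more delicate \eqref{mrcondition}, \eqref{qlargecondition}) first enters, since for $q\le\frac{N}{N-1}$ the exponent $\frac{N+1}{N-1}$ is already at least $m$ and no iteration is needed. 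A secondary technical point is justifying that $u^{1-\alpha/q}\in W^{1,q}_{\mathrm{loc}}$ with the global weighted gradient control so that the weighted Sobolev inequality legitimately applies; this follows from Remark~\ref{lambda1remark1} ($u\in W^{1,N}_{\mathrm{loc}}(\Omega)\cap C^{0,\eta}(\overline\Omega)$) together with \eqref{gradbound}, but should be stated carefully.
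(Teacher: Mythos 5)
Your plan is a genuinely different route from the paper's, and as sketched it does not close. Two concrete problems.

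First, the iteration you propose would stall. The only gradient information available is item \eqref{gradbound} of Lemma \ref{lpdeltaestimates}: the bound $\|\,|\nabla v|\,\|_{L^{q,\delta}(\Omega)}\le C$ for $v=u^{1-\alpha/q}$. This bound is \emph{not} improved by increasing the integrability of $u$ or of $h$; the proof of Lemma \ref{lpdeltaestimates} derives it once and for all from \eqref{localbound} and \eqref{breziscabrebound}, and nothing in the loop you describe (weighted Sobolev on $v$, then a Green-function estimate on $\lambda u\,\delta$) feeds back into a better weighted $L^q$ control of $\nabla v$. So the "weighted Sobolev" pass of your bootstrap gives the same output at every iteration; the map $p\mapsto p'$ you invoke is not a contraction toward $m$, it is constant, and the ``fixed point of the iteration'' claim is unsupported. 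You would also need to state precisely which weighted Sobolev/Hardy inequality is being used, which you leave open.

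Second, your guess about where the restriction $q>\frac{N}{N-1}$ first enters is wrong: Lemma \ref{lebesguelemma} is proved under \eqref{H1} alone, for the full range $q\in(1,2)$, with no role for \eqref{mrcondition} or \eqref{qlargecondition}. Those conditions enter only in the subsequent $L^\infty$ bootstrap (Propositions \ref{gmpbootstrap1}, \ref{estimatethm1}, \ref{estimatethm2}, \ref{estimateprop3}, \ref{mainestimate}), where the relevant summability thresholds from \cite{GMP} must be met. Telling yourself that ``no iteration is needed'' when $q\le\frac{N}{N-1}$ also does not help: Lemma \ref{lpdeltaestimates}\eqref{ubound} gives a bound in the weighted space $L^{p,\delta}(\Omega)$ only, and removing the weight $\delta$ is precisely what Lemma \ref{lebesguelemma} accomplishes; a weighted bound with exponent above $m$ is not an unweighted $L^m$ bound.

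What the paper actually does is a one-shot argument, not a bootstrap. Set $v=u^{1-\alpha/q}$ and check (as in \cite[Lemma 2.6]{CLLM}, using $1-\alpha/q>\tfrac12$) that $v\in H_0^1(\Omega)$. Then apply \cite[Proposition 2]{Souplet} to get the two weighted bounds
\[
\int_\Omega v^q\,\delta^{-(q-1)}\le C\Bigl(\int_\Omega v\,\delta\Bigr)^q+C\int_\Omega|\nabla v|^q\,\delta,
\qquad
\Bigl(\int_\Omega v^{q^*}\delta^{\frac{N}{N-q}}\Bigr)^{q/q^*}\le C\Bigl(\int_\Omega v\,\delta\Bigr)^q+C\int_\Omega|\nabla v|^q\,\delta,
\]
whose right-hand sides are controlled uniformly by Lemma \ref{lpdeltaestimates}. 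Finally apply the interpolation inequality \cite[Lemma 3]{Souplet} with $b=\frac{qN}{N-q+1}$, $\theta=\frac{q^*-b}{q^*-q}$, and the resulting weight exponent $\gamma$, which one computes to be exactly zero; this yields $\int_\Omega v^b\le C$, i.e.\ $\int_\Omega u^m\le C$ with $m=b\bigl(1-\tfrac{\alpha}{q}\bigr)=\frac{(q-\alpha)N}{N-q+1}$. The crucial step you are missing is the interpolation that cancels the $\delta$-weight in a single pass; without an analogue of it, iterating the Green-function estimate will not remove the weight.
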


\begin{proof}

Let us denote $v=u^{1-\frac{\alpha}{q}}$. Since $1-\frac{\alpha}{q}>\frac{1}{2}$, we can argue as in \cite[Lemma 2.6]{CLLM} to prove that $v\in H_0^1(\Omega)$. Then, \cite[Proposition 2]{Souplet} implies that
\begin{equation*}
\int_\Omega v^q \delta^{-(q-1)}\leq C\left(\int_\Omega v\delta\right)^q+C\left(\int_\Omega|\nabla v|^q\delta\right),
\end{equation*}
and 
\begin{equation*}
\left(\int_\Omega v^{q^*}\delta^\frac{N}{N-q}\right)^{q/q^*}\leq C\left(\int_\Omega  v\delta\right)^q+C\left(\int_\Omega|\nabla v|^q\delta\right).
\end{equation*}
Hence, by Lemma \ref{lpdeltaestimates} we derive that
\begin{equation}\label{deltaqbounds}
\int_\Omega v^q \delta^{-(q-1)}\leq C\quad\text{ and }\quad\int_\Omega v^{q^*}\delta^\frac{N}{N-q}\leq C.
\end{equation}
Now, \cite[Lemma 3]{Souplet} implies that

\begin{equation}\label{gammabound}
\int_\Omega v^b\delta^\gamma\leq C\left(\int_\Omega v^q\delta^{-(q-1)}\right)^\theta\left(\int_\Omega v^{q^*}\delta^\frac{N}{N-q}\right)^{1-\theta},
\end{equation}
where
\[b=\frac{qN}{N-q+1},\quad \theta=\frac{q^*-b}{q^*-q}\in (0,1)\quad\text{ and }\quad\gamma=\frac{N}{N-q}-\frac{(q^*-b)(q-1+\frac{N}{N-q})}{q^*-q}.\]
It is easy to check that, in fact, $\gamma=0$. Therefore, recalling that $m=b\left(1-\frac{\alpha}{q}\right)$, by \eqref{gammabound} and \eqref{deltaqbounds} we conclude that
\[\int_\Omega v^b=\int_\Omega u^m\leq C,\]
and the result holds true.
\end{proof}

\subsection{A priori $L^\infty$ estimates}\text{}

\smallskip

In this subsection we will show how to obtain $L^\infty$ estimates on the solutions to \eqref{problem} for $\lambda>0$ by combining the $L^p$ estimate given by Lemma \ref{lebesguelemma} and a bootstrapp argument. We will make use of several results in \cite{GMP}. In fact, the ideas in such a paper will be used also to derive some new results which provide analogous estimates in our singular framework.

\smallskip

We start the subsection with the easier case $\alpha=0$, which is interesting itself; we will deal with the singular case $\alpha\in (0,q-1)$ later. Thus we state and prove the following

\begin{proposition}\label{gmpbootstrap1}
Assume that \eqref{H1} holds with $\alpha=0$, and consider the sequence $\{Q_n\}$ defined by \eqref{Qn}, i.e.,
\begin{equation*}
Q_n=\left\{
\begin{array}{ll}
\displaystyle 2&\forall n\leq 4,
\\
\displaystyle\frac{n+2-\sqrt{n^2-4n-4}}{4}&\forall n\geq 5.
\end{array}
\right.
\end{equation*}
Then, for every $q\in (1,Q_N]\setminus\{2\}$ and every $\lambda_0>0$, there exists $C>0$ such that
\begin{equation}\label{maximumestimate2}
\|u\|_{L^\infty(\Omega)}\leq C
\end{equation}
for every solution $u$ to \eqref{problem} with $\lambda>\lambda_0$.
\end{proposition}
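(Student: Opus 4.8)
The plan is to run a bootstrap argument starting from the $L^m$ estimate of Lemma \ref{lebesguelemma} (which in the case $\alpha=0$ gives $u$ bounded in $L^m(\Omega)$ with $m=\frac{qN}{N-q+1}$, independently of $\lambda>\lambda_0$), and to iterate regularity estimates for the equation $-\Delta u=\lambda u+\mu(x)|\nabla u|^q+f(x)$ treating the gradient term as a source. The key input is the family of results from \cite{GMP} that control $\|u\|_{L^s(\Omega)}$ (or $\|\nabla u\|_{L^s(\Omega)}$) in terms of lower-order Lebesgue norms of $u$ for equations with a $|\nabla u|^q$ term. First I would record that, since $q<2$, Young's inequality gives $\mu(x)|\nabla u|^q\le \eta|\nabla u|^2+C_\eta$ for any $\eta>0$, so the equation has ``subquadratic'' gradient growth and the machinery of \cite{GMP} applies; the point is to extract from it the precise gain of integrability at each step. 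Concretely, if $u\in L^s(\Omega)$ for some $s$ in the relevant range, the corresponding result in \cite{GMP} yields that $u$ lies in $L^{\Phi(s)}(\Omega)$ for a map $\Phi$ that is strictly increasing as long as $s$ stays below the critical threshold, with all constants depending only on $\lambda_0$, $N$, $q$, $\|\mu\|_{L^\infty(\Omega)}$ and $\|f\|_{L^{p_0}(\Omega)}$ — crucially \emph{not} on $\lambda$ itself (one uses $\lambda>\lambda_0$ to absorb the $\lambda u$ term exactly as in Lemma \ref{locallemma} and Lemma \ref{lebesguelemma}).

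Next I would iterate: starting from $s_0=m$ and setting $s_{k+1}=\Phi(s_k)$, I show the sequence $\{s_k\}$ increases and, after finitely many steps, either crosses the value $\frac{N}{2}$ (below which $f\in L^{p_0}$ with $p_0>\frac N2$ is the dominant datum and elliptic regularity plus the gradient bootstrap closes things) or reaches a range where a final step gives $u\in L^\infty(\Omega)$ directly — again via the appropriate $L^\infty$ estimate in \cite{GMP}, which requires $u$ to already be in some $L^s$ with $s$ large enough relative to $N$ and $q$. The role of the hypothesis $q\in(1,Q_N]$ is precisely to guarantee that this iteration does not stall: the fixed-point/threshold analysis of the map $\Phi$ shows that the iteration escapes to the supercritical regime if and only if the starting exponent $m=\frac{qN}{N-q+1}$ is above the ``stalling'' value, and the inequality defining $Q_N$ in \eqref{Qn} is exactly the condition $m$ must satisfy; equivalently, one checks that condition \eqref{mrcondition} with $\alpha=0$ reduces to $q\le Q_N$ (this is the content anticipated in the discussion of Corollary \ref{corollary1}). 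So the bulk of the argument is: (i) state the precise one-step gain $\Phi$ from \cite{GMP}; (ii) verify the arithmetic that $m>$ (stalling exponent) $\iff q\le Q_N$; (iii) conclude the iteration terminates with $u\in L^\infty(\Omega)$, with a bound uniform over $\lambda>\lambda_0$.

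The main obstacle I expect is step (i)–(ii): pinning down exactly which estimate in \cite{GMP} to invoke at each stage and tracking the exponents through the bootstrap so that the break-even computation lands precisely on $Q_N$. The subquadratic exponent $q$ enters both through the integrability gain of the gradient term and through the weight structure, so one must be careful that the gradient term $|\nabla u|^q$, estimated via $\nabla u\in L^{q s'}$-type bounds coming from the $W^{2,\cdot}$-regularity of the equation, feeds back consistently into the next Lebesgue-space estimate for $u$ without losing the uniformity in $\lambda$. I would handle this by writing the iteration purely in terms of exponents, isolating the recursion $s_{k+1}=\Phi(s_k)$, and reducing the whole question to: does the increasing sequence generated by $\Phi$ from the initial value $m$ reach the supercritical regime? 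The answer is yes precisely under \eqref{mrcondition} (here with $\alpha=0$), and the explicit form \eqref{Qn} follows by solving the resulting quadratic inequality in $q$.
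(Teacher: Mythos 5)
Your high-level plan is the same as the paper's: start from the $L^m$ estimate of Lemma~\ref{lebesguelemma} with $m=\frac{qN}{N-q+1}$ (for $\alpha=0$), write the equation as $u-\Delta u=\mu|\nabla u|^q+h$ with $h=(\lambda+1)u+f$, and bootstrap using the \cite{GMP} estimates until $L^\infty$ is reached. However, your account of where $q\le Q_N$ enters is not quite right, and a key structural step is missing. In the paper's proof the bootstrap map $s\mapsto s^{**}=\frac{Ns}{N-2s}$ never ``stalls'' once it starts (it has no positive fixed point, so the increasing sequence must cross $N/2$ in finitely many steps); there is no threshold exponent that $m$ must exceed for the iteration to escape. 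The constraint $q\le Q_N$ instead controls whether the bootstrap can \emph{start at all} in the range $q>\frac{N}{N-1}$, because there the relevant results of \cite{GMP} (Theorems~3.8 and 4.9) require $h\in L^{\frac{N(q-1)}{q}}(\Omega)$ as a minimal hypothesis, and the inequality $\frac{N(q-1)}{q}\le m=\frac{qN}{N-q+1}$ is precisely $q\le Q_N$ (equivalently, \eqref{mrcondition} with $\alpha=0$, as you correctly observe). For $q<\frac{N}{N-1}$ the paper invokes \cite[Theorem~5.8]{GMP} instead, and there the constraint is vacuous since $\frac{N}{N-1}<Q_N$; the borderline $q=\frac{N}{N-1}$ is handled by a Young's inequality perturbation that bumps $q$ up by $\varepsilon$. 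Your proposal omits this trichotomy on $q$ (which of the three \cite{GMP} theorems applies and when), and it is exactly this case analysis that makes the proof work. A smaller point: your invocation of Young's inequality $\mu|\nabla u|^q\le\eta|\nabla u|^2+C_\eta$ to justify applying \cite{GMP} is a red herring — the paper does not reduce to the quadratic case; the subquadratic exponent $q$ is used directly in the \cite{GMP} estimates, and your remark would lose the dependence on $q$ that produces $Q_N$. Similarly, the ``$W^{2,p}$-regularity of the equation'' you gesture at is not what \cite{GMP} provides; those estimates come from energy/Stampacchia-type test-function arguments yielding bounds like $\|u^\tau\|_{H_0^1}\le M$, and feeding them back into the recursion requires the specific exponent identities $\tau=\frac{p(N-2)}{2(N-2p)}$, $(2\tau-1)p'=2^*\tau$, etc.
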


\begin{proof}
In this proof, $C$ denotes a positive constant independent of $u$ and $\lambda$ whose value may vary from line to line.

\smallskip
We start by assuming that $1<q<\frac{N}{N-1}$. Observe that $\frac{N}{N-1}<Q_N$, so $q\leq Q_N$ is not a restriction in this case.

\smallskip

Let us denote $h(x)=(\lambda +1)u+f(x)$. Then, $u$ satisfies 
\begin{equation*}
\begin{cases}
u-\Delta u=\mu(x)|\nabla u|^q+h(x),\quad&x\in\Omega,
\\
u=0,\quad&x\in\partial\Omega.
\end{cases}
\end{equation*} 

We know from Lemma \ref{lebesguelemma} that $\|u\|_{L^m(\Omega)}\leq C$, where $m=\frac{(q-\alpha)N}{N-q+1}$, so $\|h\|_{L^p(\Omega)}\leq C$, where $p=\min\{m,p_0\}$. If $m>\frac{N}{2}$, and taking into account that $p_0>\frac{N}{2}$, then \cite[Theorem 5.8, item (i)]{GMP} implies that $\|u\|_{L^\infty(\Omega)}\leq C$. 

\smallskip

Let us assume now that $m=\frac{N}{2}$. Then, \cite[Theorem 5.8, item (ii)]{GMP} implies that $\|u\|_{L^p(\Omega)}\leq C$ for all $p<\infty$. In particular, $\|h\|_{L^{p_0}(\Omega)}\leq C$. Since $p_0>\frac{N}{2}$, then again item (i) of the same mentioned theorem yields the $L^\infty$ estimate.

\smallskip

Suppose now that $(2^*)'<m<\frac{N}{2}$. Let us define the sequence $\{m_n\}$ inductively as
\[m_n=m_{n-1}^{**}=\frac{N m_{n-1}}{N-2 m_{n-1}}\quad\forall n\in\mathbb{N},\]
where $m_0=m$. This is clearly an increasing sequence. Moreover, using one more time \cite[Theorem 5.8, item (iii)]{GMP}, it is easy to see that $\|u\|_{L^{m_n}(\Omega)}\leq C$ for $n\in\mathbb{N}$ as long as $m_n<\frac{N}{2}$. In particular, the same holds for $h$. 

\smallskip

Assume by contradiction that $m_n<\frac{N}{2}$ for all $n\in\mathbb{N}$. Since $\{m_n\}$ is increasing and bounded from above, there exists $l\leq\frac{N}{2}$ such that, passing to a not relabeled subsequence, $m_n\to l$. Consequently,
\[l=\frac{Nl}{N-2l}.\]
From this equality we deduce that $l=0$. But this is a contradiction because $m_0>0$ and the sequence is increasing. Therefore, $m_n\geq\frac{N}{2}$ for some $n\in\mathbb{N}$, so the previous cases imply that $\|u\|_{L^\infty(\Omega)}\leq C$.

\smallskip

It only remains to consider the case $1<m\leq (2^*)'$. Now, item (iv) of the same theorem implies that 
\[\|(1+u)^{\tau-1}u\|_{L^{2^*}(\Omega)}\leq C,\quad\text{where}\quad \tau=\frac{m (N-2)}{2(N-2m)}=\frac{m^{**}}{2^*}\leq 1.\]
On the other hand, it is straightforward to prove that, for any $a\in (0,1)$, there exists a constant $b>0$ such that
\[as^{\tau}\leq \frac{s}{(1+s)^{1-\tau}}+b\quad\forall s\geq 0.\]
Then, with $m_n=m_{n-1}^{**}$ and $m_0=m$, as before,
\[\|u\|_{L^{m_1}(\Omega)}=\|u\|_{L^{2^*\tau}(\Omega)}\leq C(\|(1+u)^{\tau-1}u\|_{L^{2^*}(\Omega)}+1)\leq C.\]
In particular, $\|h\|_{L^{m_1}(\Omega)}\leq C$. It can be proved inductively that $\|u\|_{L^{m_n}(\Omega)}\leq C$ as long as $m_n\leq(2^*)'$. Arguing as above, we deduce that $\{m_n\}$ is increasing and divergent. Hence, $m_n>(2^*)'$ for some $n\in\mathbb{N}$, and the proof concludes using the previous cases.

\smallskip

We now turn to the range $\frac{N}{N-1}<q<2$. The procedure is the same as above, but in this case, instead of Theorem 5.8,  one has to apply (a finite number of times) either \cite[Theorem 4.9]{GMP} or \cite[Theorem 3.8]{GMP}, depending on the value of $q$. In both cases, one has to verify in the first step of the bootstrap that $h\in L^{\frac{(q-1)N}{q}}(\Omega)$ so that the hypotheses of both theorems are satisfied. We know by virtue of Lemma \ref{lebesguelemma} that $h\in L^m(\Omega)$, so we have to impose that
\[\frac{N(q-1)}{q}\leq \frac{qN}{N-q+1}.\]
One can easily check that the previous inequality is satisfied if and only if $q\leq Q_N$.

\smallskip

It is left to consider the case $q=\frac{N}{N-1}$. Since $\frac{N}{N-1}<Q_N$, we can take $\varepsilon>0$ small enough so that
\[\frac{N}{N-1}<q+\varepsilon<Q_N.\]
Moreover, we have by Young's inequality that
\[\mu(x)|\xi|^q+h(x)\leq \mu(x)|\xi|^{q+\varepsilon}+h_\varepsilon(x)\quad\forall \xi\in\mathbb{R}^N,\text{ a.e. }x\in\Omega,\]
where $h(x)=(\lambda +1) u+f(x)$ and $h_\varepsilon(x)=h(x)+C_\varepsilon$ for some $C_\varepsilon>0$. Therefore, the previous case can be applied and the proof concludes.
\end{proof}

We deal now with the singular case. For this purpose, it is necessary to derive results similar to the ones from \cite{GMP} mentioned in the previous proof, but valid for singular equations. Even though our results are not proper extensions in the whole generality (as in \cite{GMP} the solutions are weaker than ours and the terms in their equation are not explicit and only satisfy growth restrictions), they are new in considering singular terms. 

\smallskip

The mentioned results will be concerned with the following auxiliary problem:

\begin{equation}\label{auxproblem}
\begin{cases}
\dys \beta u-\Delta u=\mu(x)\frac{|\nabla u|^q}{u^\alpha}+h(x)\quad &\text{ in }\Omega,
\\
u>0 \quad &\text{ in }\Omega,
\\
u=0\quad &\text{ on }\partial\Omega,
\end{cases}
\end{equation}
where the parameters satisfy

\begin{equation}\label{parameters}
1<q<2,\quad \alpha\in [0,q-1),\quad \beta>0, \quad 0\lneqq \mu\in L^\infty(\Omega).
\end{equation}

Let us define the functions $r,\sigma:[0,q-1)\to\re$ as

\begin{equation}\label{keyparameters}
r(\theta)=\frac{N(q-1-\theta)}{q-2\theta},\quad \sigma(\theta)=\frac{(N-2)(q-1-\theta)}{2(2-q)}\quad\forall\theta\in [0,q-1).
\end{equation}
Observe also that both $r,\sigma$ are decreasing functions. We will not write their dependence on $\theta$ when confusion is not arisen. 

\smallskip

The following result provides estimates on solutions to \eqref{auxproblem} when $q$ is large and $h$ has enough summability. 

\begin{proposition}\label{estimatethm1}
Assume that $q,\alpha,\beta,\mu$ satisfy \eqref{parameters}, let $r,\sigma$ be defined as in \eqref{keyparameters} and let $h\in L^p(\Omega)$ with $p>1$. Assume also that 
\begin{equation*}
1+\frac{2}{N}\leq q<2,
\end{equation*}
and let us denote
\[a(q)=\left(q-1-\frac{2}{N}\right)\frac{N}{N-2}\in (0,q-1).\]
Then, for all $C>0$ and $p>1$, there exists $M>0$ such that, for any $h\in L^p(\Omega)$ with $\|h\|_{L^p(\Omega)}\leq~C$ and for any solution $u$ to problem \eqref{auxproblem}, the following holds:
\begin{enumerate}
\item If $p=r(\theta)$ for some $0\leq\theta\leq\min\left\{\alpha,a(q)\right\}$, then $\|u\|_{H_0^1(\Omega)}+\|u^{\sigma(\theta)}\|_{H_0^1(\Omega)}\leq M$;
\\
\item if $r(0)<p<\frac{N}{2}$, then $\|u^\tau\|_{H_0^1(\Omega)}\leq M$, where $\tau=\frac{p(N-2)}{2(N-2p)}$;
\\
\item if $p=\frac{N}{2}$, then $\|u^{\tau_0}\|_{H_0^1(\Omega)}\leq M$ for all $\tau_0<\infty$, and
\\
\item if $p>\frac{N}{2}$, then $\|u\|_{L^\infty(\Omega)}\leq M$.
\end{enumerate} 
\end{proposition}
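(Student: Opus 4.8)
\textbf{Proof proposal for Proposition \ref{estimatethm1}.}
The plan is to adapt the bootstrap philosophy of \cite{GMP} to the singular equation \eqref{auxproblem} by exploiting the change of variable that "desingularizes'' the gradient term. Recall that for $\theta\in[0,q-1)$ and a solution $u$, setting $v=u^{1-\theta/q}$ one has $|\nabla v|^q = (1-\theta/q)^q\,|\nabla u|^q/u^\theta$; choosing $\theta$ appropriately converts the singular term into a power of $|\nabla v|$ with a controllable remainder via Young's inequality, exactly as in \eqref{GPSineq}. More precisely, I would test \eqref{auxproblem} against a suitable power of $u$ (or of $u$ plus a constant), of the form $\phi=(1+u)^{s-1}u$ for a parameter $s$ to be chosen according to which item we are proving, and absorb the gradient term: the point is that $\mu|\nabla u|^q u^{-\alpha}\phi\sim \mu|\nabla u|^q u^{s-1-\alpha}$, and after writing $|\nabla u|^q u^{s-1-\alpha}$ in terms of $|\nabla(u^{\rho})|^2$ for the right exponent $\rho$ plus a lower-order term, the quadratic piece is partially absorbed by the left-hand side $\int|\nabla u|^2(1+u)^{s-1}$ coming from $-\Delta u$. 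The constraints $q\geq 1+\frac2N$ and $\theta\leq\min\{\alpha,a(q)\}$ (respectively $\theta=0$ in items (2)–(4)) are precisely what guarantees that the absorption is possible and that the resulting Sobolev exponent is in the admissible range; the definitions of $r(\theta)$ and $\sigma(\theta)$ in \eqref{keyparameters} are reverse-engineered so that the summability $h\in L^{r(\theta)}$ matches the integrability produced by the Sobolev embedding applied to $u^{\sigma(\theta)}\in H_0^1(\Omega)$.

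For item (1): with $p=r(\theta)$, test against $u$ itself (this gives the $H_0^1$ bound after absorbing the gradient term using $\theta\leq\alpha$ and Young, plus $\|h\|_{L^{(2^*)'}}\le C\|h\|_{L^{r(\theta)}}$, which needs $r(\theta)\ge(2^*)'$ — one checks this follows from $\theta\le a(q)$), and then test against $u^{2\sigma(\theta)-1}$ to produce $\|\nabla u^{\sigma(\theta)}\|_{L^2}^2$ on the left; the gradient term on the right becomes $\int \mu |\nabla u|^q u^{2\sigma(\theta)-1-\alpha}$, which by Young is bounded by $\tfrac12\int|\nabla u^{\sigma(\theta)}|^2 + C\int u^{\,(\text{power})}$, and the exponent in the last integral, together with the Sobolev inequality $\|u^{\sigma(\theta)}\|_{L^{2^*}}\le C\|\nabla u^{\sigma(\theta)}\|_{L^2}$ and $\|h\|_{L^{r(\theta)}}\le C$, closes the estimate. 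For items (2), (3), (4) one takes $\theta=0$ (so the equation is genuinely like the non-singular $\beta u-\Delta u=\mu|\nabla u|^q+h$ after the same absorption, since $\alpha\le q-1$ lets us dominate $\mu|\nabla u|^q u^{-\alpha}$ by $\mu|\nabla u|^q$ on the set $u\ge1$ and by a Young remainder where $u<1$) and then copies verbatim the bootstrap of \cite[Theorem 5.8 / 4.9 / 3.8]{GMP}: test against $(1+u)^{s-1}u$, get $\|u^{\tau}\|_{H_0^1}\le M$ with $\tau=\tfrac{p(N-2)}{2(N-2p)}$ when $r(0)<p<N/2$, deduce $\|u\|_{L^{2^*\tau}}\le M$, iterate; the threshold cases $p=N/2$ and $p>N/2$ are handled as in \cite{GMP} (Moser-type iteration / $L^\infty$ regularity for $-\Delta u\in L^p$, $p>N/2$).

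The main obstacle I expect is the absorption step in the presence of the singularity: one must verify that $\mu(x)|\nabla u|^q u^{-\alpha}$ times the test function really can be split, via Young's inequality, into a fraction (strictly less than $1$) of the "good'' quadratic gradient term produced by $-\Delta u$ plus a term involving only a power of $u$ with a \emph{subcritical} exponent — and it is exactly here that the numerology $q\ge 1+\tfrac2N$, $\alpha<q-1$, and the bound $\theta\le\min\{\alpha,a(q)\}$ enters, because for larger $\theta$ or smaller $q$ the power of $u$ left over is too large to be controlled by $\|u^{\sigma(\theta)}\|_{H_0^1}$. A secondary technical point is justifying that the test functions $(1+u)^{s-1}u$ and $u^{2\sigma(\theta)-1}$ are admissible in the weak formulation, i.e. lie in $H_0^1(\Omega)\cap L^\infty(\Omega)$; since every solution is bounded and continuous (Remark \ref{lambda1remark1}) and $1-\theta/q>\tfrac12$ gives $u^{1-\theta/q}\in H_0^1(\Omega)$ by the argument of \cite[Lemma 2.6]{CLLM}, these functions are legitimate, and by Remark \ref{equivalentdef} testing with $H_0^1\cap L^\infty$ functions is allowed. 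Once these points are settled, the four items follow by assembling the estimates above with the corresponding iteration from \cite{GMP}.
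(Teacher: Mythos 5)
Your plan is built around absorbing the gradient term by Young's inequality into the quadratic Dirichlet form, but this cannot close the estimate: the term $\mu\,|\nabla u|^q/u^\alpha$ has \emph{superlinear} homogeneity $q-\alpha>1$, so after Young and Sobolev the residual is of order $\|u^{\sigma(\theta)}\|_{H_0^1(\Omega)}^{2^*}$, which grows \emph{faster} than the left-hand side $\|u^{\sigma(\theta)}\|_{H_0^1(\Omega)}^2$. Concretely, testing with $u^{2\sigma-1}$ and writing $|\nabla u|^q u^{2\sigma-1-\theta}=(u^{\sigma-1}|\nabla u|)^q\,u^{2\sigma-1-\theta-q(\sigma-1)}$, the Young remainder has exponent $\tfrac{2}{2-q}\bigl(2\sigma-1-\theta-q(\sigma-1)\bigr)=2^*\sigma$, and Sobolev gives $\int u^{2^*\sigma}\le C\,\|u^\sigma\|_{H_0^1}^{2^*}$ with $2^*>2$: no choice of small Young parameter helps. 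The same problem already kills your first step (testing with $u$ itself): there the residual exponent $\tfrac{2(1-\alpha)}{2-q}$ is $\ge 2$ precisely because $\alpha<q-1$, so the $H_0^1$ bound does not follow by absorption.

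The paper's proof is structurally different and contains two devices you are missing. First, all tests are against powers of the \emph{truncation} $G_k(u)$ rather than of $u$. This serves two purposes simultaneously: (i) on $\{u\ge k\}$ the singularity is harmless, since $u^{-\alpha}\le k^{-(\alpha-\theta)}\,u^{-\theta}$, which is where the parameter $\theta\le\alpha$ actually enters; (ii) in the $h$-term one splits over $\{|h|\le\beta u\}$ (absorbed by $\beta\int u\,G_k(u)^{2\sigma-1}$) and $\{|h|>\beta k\}$, yielding a coefficient $\|h\chi_{\{|h|\ge\beta k\}}\|_{L^r}$ that tends to $0$ as $k\to\infty$. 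Second, the resulting inequality in $Y_k:=\|G_k(u)^\sigma\|_{H_0^1(\Omega)}$ is of the form $CY_k^{\,2}\le Y_k^{\,q+2^*(1-q/2)}+\varepsilon(k)\,Y_k^{\,2^*/r'}$ with a \emph{superlinear} first power; rather than absorbing, one rewrites it as $F(Y_k)\le\varepsilon(k)$ where $F(Y)=CY^{2-2^*/r'}-Y^{q+2^*(1-q/2)-2^*/r'}$ is concave, positive near zero, with a unique maximum $F^*$. Since $k\mapsto Y_k$ is continuous and $Y_k\to 0$ as $k\to\infty$, once $\varepsilon(k)<F^*$ the value $Y_k$ must lie below the smaller root, giving a bound for $Y_{k^*}$; one then recovers full $\|u^\sigma\|_{H_0^1}$ and $\|u\|_{H_0^1}$ bounds by testing also with $T_{k^*+1}(u)$. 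None of this dichotomy-on-the-concave-function argument appears in your sketch, and without it (or an equivalent replacement) the estimate simply does not close.

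The same gap reappears in your treatment of items (2)--(4). You propose to "dominate $\mu|\nabla u|^q u^{-\alpha}$ by $\mu|\nabla u|^q$ on $\{u\ge 1\}$ and by a Young remainder where $u<1$", but on $\{u<1\}$ the factor $u^{-\alpha}$ blows up and Young does not leave a \emph{bounded} remainder unless the test function already vanishes there — which is again exactly what $G_k(u)$ achieves for free. Moreover, the paper does not absorb the gradient term by Young here either: it uses a three-exponent H\"older estimate $\tfrac{q}{2}+\tfrac{2-q}{2^*}+\tfrac{2-q}{N}=1$ that produces a multiplicative factor $\|G_k(u)\|_{L^{2^*\sigma(0)}(\Omega)}^{q-1}$ in front of the Dirichlet form, and then uses item (1) together with the absolute continuity of the integral to make this factor strictly less than the left-hand-side constant for $k$ large — a self-improving ("smallness from the previous step") mechanism, not a fixed Young split. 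Your proposal does not contain this step. The administrative observations you make (admissibility of the test functions via Remark~\ref{equivalentdef} and Remark~\ref{lambda1remark1}, and that the exponents $r(\theta),\sigma(\theta)$ are reverse-engineered from Sobolev) are correct but peripheral; the two essential mechanisms above are missing.
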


\begin{remark}\label{thetaremark1}
It is worth noticing the following facts about Proposition \ref{estimatethm1}:
\begin{itemize}
\item Observe that the inequality $\theta\leq a(q)$ is equivalent to $\sigma(\theta)\geq 1$, and both imply that 
\[\theta<\left(\frac{N-1}{N}q-1\right)\frac{N}{N-2}.\]
One can check using \eqref{keyparameters} that this last inequality is equivalent to $r(\theta)>1$. In conclusion, $r>1$ in Proposition \ref{estimatethm1}.

\item Concerning the hypothesis in item (1), is is easy to see, using that $r$ is decreasing, $r(a(q))=(2^*)'$ and $r(0)=\frac{N(q-1)}{q}$, that it is equivalent to $h\in L^p(\Omega)$ for some
\begin{equation}\label{equivrestr}
\max\{r(\alpha),(2^*)'\}\leq p\leq \frac{N(q-1)}{q}.
\end{equation}
If $\alpha>0$, then \eqref{equivrestr} is obviously weaker than  $p=\frac{N(q-1)}{q}$, which is imposed in \cite[Theorem 5.8]{GMP}. Furthermore, if $\alpha=0$, then $p=\frac{N(q-1)}{q}$ and \eqref{equivrestr} are actually the same thing.

\item Notice also that, if $p=r(\theta)$, then $\tau=\sigma(\theta)$. In consequence, since the function $p\mapsto\tau(p)$ is strictly increasing, it holds that $\tau>\sigma(0)$ if $p>r(0)$.
\end{itemize}
\end{remark}

\begin{proof}[Proof of Proposition \ref{estimatethm1}]
{\bf Proof of (1).} For $k>0$, let us take $G_k(u)^{2\sigma-1}\in H_0^1(\Omega)\cap L^\infty(\Omega)$ as test function, where $\sigma=\sigma(\theta)$ (this choice can be made since $\sigma\geq 1$, see Remark \ref{thetaremark1}). Thus we obtain
\begin{equation}\label{eq1}
\beta\int_\Omega u G_k(u)^{2\sigma-1}+(2\sigma-1)\int_\Omega G_k(u)^{2(\sigma-1)}|\nabla G_k(u)|^2=\int_\Omega \left(\mu(x)\frac{|\nabla G_k(u)|^q}{u^\alpha} + |h(x)|\right)G_k(u)^{2\sigma-1}.
\end{equation}
It is clear that
\begin{equation}
(2\sigma-1)\int_\Omega G_k(u)^{2(\sigma-1)}|\nabla G_k(u)|^2=\frac{2\sigma-1}{\sigma^2}\int_\Omega|\nabla G_k(u)^\sigma|^2.
\end{equation}

Let us now estimate the nonlinear term. We have that
\begin{align*}
\int_\Omega \frac{|\nabla G_k(u)|^q}{u^\alpha} G_k(u)^{2\sigma-1}&\leq \frac{1}{k^{\alpha-\theta}}\int_{\Omega} |\nabla G_k(u)|^q G_k(u)^{2\sigma-1-\theta}
\\
&\leq C\left(\int_\Omega |\nabla G_k(u)^\sigma|^2\right)^\frac{q}{2}\left(\int_\Omega G_k(u)^{\frac{2}{2-q}(2\sigma-1-\theta-q(\sigma-1))}\right)^{1-\frac{q}{2}}.
\end{align*}
One can easily check that
\[\frac{2}{2-q}(2\sigma-1-\theta-q(\sigma-1))=2^*\sigma.\]
Hence, the Sobolev's embeddings yield 
\[\int_\Omega |\nabla G_k(u)|^q G_k(u)^{2\sigma-1-\theta}\leq C\left(\int_\Omega |\nabla G_k(u)^\sigma|^2\right)^{\frac{q}{2}+\frac{2^*}{2}\left(1-\frac{q}{2}\right)}.\]

We now focus on the last term in \eqref{eq1}. 
\begin{align*}
\int_\Omega |h(x)|G_k(u)^{2\sigma-1}&= \int_{\{|h(x)|\leq\beta u\}}|h(x)|G_k(u)^{2\sigma-1}+\int_{\{|h(x)|>\beta u\}}|h(x)|G_k(u)^{2\sigma-1}
\\
& \leq \beta\int_\Omega u G_k(u)^{2\sigma-1}+\int_{\{|h(x)|>\beta k\}}|h(x)|G_k(u)^{2\sigma-1}.
\end{align*}
The reader can check that
\[(2\sigma-1)r'=2^*\sigma.\]
Thus, H\"older's and Sobolev's inequalities imply that
\[\int_\Omega |h(x)|G_k(u)^{2\sigma-1}\leq  \beta\int_\Omega u G_k(u)^{2\sigma-1} +C \left(\int_{\{|h(x)|\geq \beta k\}}|h(x)|^r\right)^\frac{1}{r}\left(\int_\Omega |\nabla G_k(u)^\sigma|^2\right)^\frac{2^*}{2r'}.\]
If we denote $Y_k=\|G_k(u)^\sigma\|_{H_0^1(\Omega)}$, we have proved so far that
\[C Y_k^2\leq Y_k^{q+2^* \left(1-\frac{q}{2}\right)}+\|h\chi_{\{|h(x)|\geq\beta k\}}\|_{L^r(\Omega)}Y_k^\frac{2^*}{r'},\]
or equivalently,
\begin{equation}\label{ykineq}
C Y_k^{2-\frac{2^*}{r'}}-Y_k^{q+2^* \left(1-\frac{q}{2}\right)-\frac{2^*}{r'}}\leq\|h\chi_{\{|h(x)|\geq\beta k\}}\|_{L^r(\Omega)}.
\end{equation}
Let us define the function $F:[0,+\infty)\to\re$ by
\[F(Y)=C Y^{2-\frac{2^*}{r'}}-Y^{q+2^* \left(1-\frac{q}{2}\right)-\frac{2^*}{r'}}\quad\forall Y\geq 0.\]
Since $q<2$, it is easy to see that 
\[0<2-\frac{2^*}{r'}<q+2^*\left(1-\frac{q}{2}\right)-\frac{2^*}{r'}.\]
This means that $F$ is a concave function, positive near zero, negative far from zero, and has a unique maximum $F^*>0$ with a corresponding unique maximizer $Z^*>0$. Let us now consider
\[k^*=\inf\{k> 0:\|h\chi_{\{|h(x)|\geq\beta k\}}\|_{L^r(\Omega)}<F^*\}.\]
Hence, for any $\delta>0$, the equation $F(Y)=\|h\chi_{\{|h(x)|\geq\beta (k^*+\delta)\}}\|_{L^r(\Omega)}$ has two roots $Z_1$ and $Z_2$ such that $Z_1<Z^*<Z_2$. By virtue of inequality \eqref{ykineq}, it holds that for every $k\geq k^*+\delta$, either $Y_k\leq Z_1$ or $Y_k\geq Z_2$. But the function $k\mapsto Y_k$ is continuous and tends to zero as $k$ tends to infinity. Hence, 
\[Y_{k^*+\delta}\leq Z_1<Z^*.\]
If we let now $\delta$ tend to zero, we obtain that 
\[Y_{k^*}=\|G_{k^*}(u)^\sigma\|_{H_0^1(\Omega)}\leq Z^*.\] 
Notice that
\[G_{k^*}(u)=u-k^*\geq 1\quad\text{in the set }\{u\geq k^*+1\}.\]
Therefore,
\begin{align*}
\int_\Omega|\nabla G_{k^*+1}(u)|^2&=\int_\Omega\chi_{\{u\geq k^*+1\}}|\nabla u|^2 \leq \int_\Omega\chi_{\{u\geq k^*+1\}}|\nabla u|^2 G_{k^*}(u)^{2(\sigma-1)}
\\
&\leq  \int_\Omega\chi_{\{u\geq k^*\}}|\nabla u|^2 G_{k^*}(u)^{2(\sigma-1)}=\frac{1}{\sigma^2}\int_\Omega |\nabla G_{k^*}(u)^\sigma|^2\leq C.
\end{align*}
Now we take $T_{k^*+1}(u)$ as test function in the weak formulation of \eqref{auxproblem} so we get
\begin{align*}
\int_\Omega |\nabla T_{k^*+1}(u)|^2&= \int_\Omega\left(\mu(x)\frac{|\nabla u|^q}{u^\alpha}+h(x)\right)T_{k^*+1}(u)
\\
&\leq C(k^*+1)^{1-\alpha}\int_\Omega|\nabla u|^q+(k^*+1)\int_\Omega |h(x)|
\\
&\leq C(k^*+1)\left(\int_\Omega|\nabla T_{k^*+1}(u)|^q+\int_\Omega|\nabla G_{k^*+1}(u)|^q+1\right)
\\
&\leq C(k^*+1)\left(\int_\Omega|\nabla T_{k^*+1}(u)|^q+1\right).
\end{align*}
Since $q<2$, this clearly implies that 
\[\int_\Omega |\nabla T_{k^*+1}(u)|^2\leq C.\]
Note that, in principle, this last constant depends on $k^*$, which may in turn depend on $\|h\|_{L^r(\Omega)}$. However, since $\|h\|_{L^r(\Omega)}\leq C$, the absolute continuity of the integral implies that $k^*\leq k_0$ for some $k_0>0$ independent of $\|h\|_{L^r(\Omega)}$.

\smallskip

Summarizing, 
\[\int_\Omega |\nabla u|^2\leq C,\]
which proves the first part of item (1). Moreover,
\begin{align*}
\int_\Omega|\nabla u^\sigma|^2&=\int_\Omega|\nabla G_{k^*}(u)^\sigma|^2+\int_\Omega|\nabla T_{k^*}(u)^\sigma|^2
\\
&\leq (Z^*)^2+\sigma^2\int_\Omega T_{k^*}(u)^{2(\sigma-1)}|\nabla T_{k^*}(u)|^2\leq (Z^*)^2+\sigma^2(k^*)^{2(\sigma-1)}\int_\Omega|\nabla u|^2\leq C.
\end{align*}
Thus, the proof of item (1) is concluded.

\smallskip

{\bf Proof of (2).} Arguing as above, we take $G_k(u)^{2\tau-1}$ as test function in the weak formulation of \eqref{auxproblem} for some $k>0$, so we obtain
\[\beta\int_\Omega u G_k(u)^{2\tau-1}+\frac{2\tau-1}{\tau^2}\int_\Omega |\nabla G_k(u)^\tau|^2=\int_\Omega \left(\mu(x)\frac{|\nabla G_k(u)|^q}{u^\alpha} + |h(x)|\right)G_k(u)^{2\tau-1}.\]
In order to estimate the nonlinear term, notice that
\[\frac{q}{2}+\frac{2-q}{2^*}+\frac{2-q}{N}=1.\]
Hence, we can use H\"older inequality with those three exponents, and we deduce that
\begin{align*}
\int_\Omega \frac{|\nabla G_k(u)|^q}{u^\alpha} G_k(u)^{2\tau-1}&\leq C\int_\Omega |\nabla G_k(u)|^q G_k(u)^{(\tau-1)q}G_k(u)^{(2-q)\tau}G_k(u)^{q-1}
\\
&\leq C\left(\int_\Omega|\nabla G_k(u)^\tau|^2\right)^\frac{q}{2}\left(\int_\Omega G_k(u)^{\tau 2^*}\right)^\frac{2-q}{2^*}\left(\int_\Omega G_k(u)^{2^*\sigma(0)}\right)^\frac{2-q}{N}
\\
&\leq C\|G_k(u)\|^{q-1}_{L^{2^*\sigma(0)}(\Omega)}\int_\Omega |\nabla G_k(u)^\tau|^2.
\end{align*}
Now choose $k_0>0$, independent of $u$, such that 
\[C\|G_k(u)\|^{q-1}_{L^{2^*\sigma(0)}(\Omega)}<\frac{2\tau-1}{\tau^2}\quad\forall k\geq k_0.\]
Notice that this is possible thanks to part (1) of the theorem and the absolute continuity of the integral. Then, removing the positive linear term which contains $\beta$ and using H\"older's inequality in the term with $h$, we derive
\[C\int_\Omega|\nabla G_k(u)^\tau|^2\leq \|h\|_{L^p(\Omega)}\left(\int_\Omega G_k(u)^{(2\tau-1)p'}\right)^\frac{1}{p'}\quad\forall k\geq k_0.\]
Since $(2\tau-1)p'=2^*\tau$, we conclude that
\[C\int_\Omega |\nabla G_k(u)^\tau|^2\leq \left(\int_\Omega |\nabla G_k(u)^\tau|^2\right)^\frac{2^*}{2p'}\quad\forall k\geq k_0.\]
Clearly, $\frac{2^*}{2p'}=\frac{2\tau-1}{2\tau}<1$, so we deduce that
\[\int_\Omega|\nabla G_{k_0}(u)^\tau|^2\leq C.\]
Finally, using that $u$ is bounded in $H_0^1(\Omega)$ (from item (1)), we deduce that
\begin{align*}
\int_\Omega|\nabla u^\tau|^2&=\int_\Omega|\nabla G_{k_0}(u)^\tau|^2+\int_\Omega|\nabla T_{k_0}(u)^\tau|^2
\\
&\leq C+\tau^2\int_\Omega T_{k_0}(u)^{2(\tau-1)}|\nabla T_{k_0}(u)|^2\leq C+\tau^2 k_0^{2(\tau-1)}\int_\Omega|\nabla u|^2\leq C.
\end{align*}
This proves part (2) of the theorem.

\smallskip

{\bf Proof of (3).} Since $\tau\to+\infty$ as $p\to \frac{N}{2}$, part (3) is a clear consequence of part (2).

\smallskip 

{\bf Proof of (4).} Let us take $G_k(u)$ as test function in the weak formulation of \eqref{auxproblem} for some $k>0$, so we obtain this time, removing the term with $\beta$,
\[\int_\Omega |\nabla G_k(u)|^2\leq C \int_\Omega |\nabla G_k(u)|^q G_k(u)+\int_\Omega |h(x)|G_k(u).\]
Since $\frac{2}{2-q}=\left(1-\frac{2}{N}\right)2^*+\frac{2}{N}2^*\sigma(0)$, we deduce that
\begin{align*}
\int_\Omega |\nabla G_k(u)|^q G_k(u)&\leq\left(\int_\Omega |\nabla G_k(u)|^2\right)^\frac{q}{2}\left(\int_\Omega G_k(u)^\frac{2}{2-q}\right)^{1-\frac{q}{2}}
\\
&\leq \left(\int_\Omega |\nabla G_k(u)|^2\right)^\frac{q}{2}\left(\int_\Omega G_k(u)^{2^*}\right)^\frac{2-q}{2^*}\|G_k(u)\|_{L^{2^*\sigma(0)}(\Omega)}^{q-1}
\\
&\leq C\|G_k(u)\|_{L^{2^*\sigma(0)}(\Omega)}^{q-1}\int_\Omega |\nabla G_k(u)|^2.
\end{align*}
Next, as in part (2), we take $k\geq k_0$, with $k_0$ independent of $u$, so that $\|G_k(u)\|_{L^{2^*\sigma(0)}(\Omega)}^{q-1}$ is small enough. Then, 
\[C\int_\Omega|\nabla G_k(u)|^2\leq \int_\Omega |h(x)|G_k(u).\]
We conclude by using the Stampacchia's method in a direct way.
\end{proof}

The following result is analogous to Proposition \ref{estimatethm1}, but is is valid for a lower range for $q$. The proof is similar to the one above, but still there are relevant differences so it is included for the convenience of the reader.

\begin{proposition}\label{estimatethm2}
Assume that $q,\alpha,\beta,\mu$ satisfy \eqref{parameters} and let $r,\sigma$ be defined as in \eqref{keyparameters}. Assume also that 
\begin{equation*}
\frac{N}{N-1}<q<1+\frac{2}{N},
\end{equation*}
and let us denote
\begin{equation}
b(q)=\left(\frac{N-1}{N}q-1\right)\frac{N}{N-2}\in (0,q-1).
\end{equation}
Then, for all $C>0$ and $p>1$, there exists $M>0$ such that, for any $h\in L^p(\Omega)$ with $\|h\|_{L^p(\Omega)}\leq~C$ and for any solution $u$ to problem \eqref{auxproblem}, the following holds:
\begin{enumerate}
\item If $p=r(\theta)$ for some $\theta\in\left[0,\min\left\{\alpha,b(q)\right\}\right]\setminus\{b(q)\}$, then $\|u(1+u)^{\sigma(\theta)-1}\|_{H_0^1(\Omega)}\leq M$;
\\
\item if $r(0)<p\leq(2^*)'$, then $\|u(1+u)^{\tau-1}\|_{H_0^1(\Omega)}\leq M$, where $\tau=\frac{p(N-2)}{2(N-2p)}$;
\\
\item if $(2^*)'<p<\frac{N}{2}$, then $\|u^\tau\|_{H_0^1(\Omega)}\leq M$, where $\tau=\frac{p(N-2)}{2(N-2p)}$;
\\
\item if $p=\frac{N}{2}$, then $\|u^{\tau_0}\|_{H_0^1(\Omega)}\leq M$ for all $\tau_0<\infty$, and
\\
\item if $p>\frac{N}{2}$, then $\|u\|_{L^\infty(\Omega)}\leq M$.
\end{enumerate} 
\end{proposition}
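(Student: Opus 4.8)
The plan is to mimic the proof of Proposition \ref{estimatethm1} almost verbatim, using $G_k(u)$-type test functions raised to suitable powers, but adapting the exponents and the Sobolev interpolation to the restricted range $\frac{N}{N-1}<q<1+\frac{2}{N}$, which forces $\sigma(0)<1$ and hence obliges us to work with the truncated-at-infinity quantity $u(1+u)^{\sigma-1}$ rather than $u^\sigma$ in the first two items. I would proceed item by item in the order stated, each time taking as test function a power of $G_k(u)$ (or of $1+G_k(u)$), expanding the left-hand side to produce $\|G_k(u)^{\tau}\|_{H_0^1(\Omega)}^2$ (resp.\ $\|(1+G_k(u))^{\tau-1}G_k(u)\|_{H_0^1(\Omega)}^2$), estimating the gradient nonlinear term by Hölder with the three exponents $\frac{q}{2},\frac{2-q}{2^*},\frac{2-q}{N}$ exactly as in the proof of part (2) of Proposition \ref{estimatethm1}, and absorbing the resulting factor $\|G_k(u)\|_{L^{2^*\sigma(0)}(\Omega)}^{q-1}$ into the good term by choosing $k\geq k_0$ large (independently of $u$, via the absolute continuity of the integral). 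The term with $h$ is handled by Hölder and Sobolev, using the algebraic identity $(2\tau-1)p'=2^*\tau$ that holds precisely when $\tau=\frac{p(N-2)}{2(N-2p)}$, and one concludes because the resulting exponent $\frac{2^*}{2p'}=\frac{2\tau-1}{2\tau}<1$.

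For item (1), where $p=r(\theta)$ with $\theta\le\min\{\alpha,b(q)\}$ but $\theta\ne b(q)$, the equality $\theta=b(q)$ would correspond to $\sigma(\theta)=1$; excluding it guarantees $\sigma(\theta)<1$ strictly, so $u^{\sigma(\theta)}$ need not lie in $H_0^1(\Omega)$ and we instead test with $\big((1+G_k(u))^{2\sigma-1}-1\big)$ or directly with $(1+G_k(u))^{2\sigma-1}-1\in H_0^1(\Omega)\cap L^\infty(\Omega)$, which produces $\|(1+G_k(u))^{\sigma-1}G_k(u)\|_{H_0^1(\Omega)}$ after rearranging; the nonlinear term is bounded using $\frac{1}{u^\alpha}\le\frac{C}{k^{\alpha-\theta}}G_k(u)^{-\theta}$ on $\{u\ge k\}$ (note $\alpha\ge\theta$) together with the identity $\frac{2}{2-q}(2\sigma-1-\theta-q(\sigma-1))=2^*\sigma$, and Sobolev. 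This yields a functional inequality of the form $F(Y_k)\le\|h\chi_{\{|h|\ge\beta k\}}\|_{L^r(\Omega)}$ with $F$ concave, positive near $0$ and eventually negative, so the continuity of $k\mapsto Y_k$ and the fact that $Y_k\to0$ as $k\to\infty$ pin down $Y_{k^*}\le Z^*$ at the threshold $k^*$, and $k^*$ is bounded independently of $\|h\|$ by absolute continuity; the remaining bound on $\|T_{k^*+1}(u)\|_{H_0^1(\Omega)}$ follows by testing with $T_{k^*+1}(u)$ and using $q<2$, exactly as in Proposition \ref{estimatethm1}.

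Items (3), (4) and (5) are essentially identical to parts (2), (3) and (4) of Proposition \ref{estimatethm1}: for (3) one tests with $G_k(u)^{2\tau-1}$ and runs the same bootstrap; for (4) one lets $p\uparrow\frac{N}{2}$ so $\tau\to\infty$; for (5) one tests with $G_k(u)$ and finishes by Stampacchia's iteration after absorbing the gradient term. The main obstacle, and the only genuinely new point compared with Proposition \ref{estimatethm1}, is item (1)--(2) in the regime $\sigma(0)<1$: one must be careful that $(1+u)^{\sigma-1}u$ (rather than $u^\sigma$) is the correct $H_0^1$ quantity, that the elementary inequality $a s^{\tau}\le s(1+s)^{\tau-1}+b$ is available to pass from $(1+u)^{\tau-1}u$-bounds to genuine $L^{2^*\tau}$-bounds of $u$ in the bootstrap, and that the borderline $\theta=b(q)$ is genuinely excluded so that all powers used are $\ge1$; the verification that the exponent bookkeeping ($(2\tau-1)p'=2^*\tau$, $\frac{q}{2}+\frac{2-q}{2^*}+\frac{2-q}{N}=1$, $r(b(q))=(2^*)'$, $r(0)=\frac{N(q-1)}{q}$) is internally consistent in this narrower $q$-range is routine but must be done carefully.
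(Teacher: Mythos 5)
Your proposal correctly identifies the global architecture (power-of-$G_k(u)$ test functions, a functional inequality $F(Y_k)\le\|h\chi_{\{|h|\geq\beta k\}}\|$, the threshold $k^*$, absolute continuity of the integral, and the need to work with $u(1+u)^{\sigma-1}$ rather than $u^\sigma$ when $\sigma<1$), but there are two genuine gaps where the paper does something you have not anticipated, and where a straight reuse of the Proposition~\ref{estimatethm1} argument would break.

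First, for items (2)--(5) you propose to reuse the H\"older split with exponents $\frac{q}{2},\frac{2-q}{2^*},\frac{2-q}{N}$ and absorb the factor $\|G_k(u)\|_{L^{2^*\sigma(0)}(\Omega)}^{q-1}$ as in Proposition~\ref{estimatethm1}(2). That split also produces a middle factor of the form $\left(\int_\Omega G_k(u)^{2^*\tau}\right)^{(2-q)/2^*}$, which in Proposition~\ref{estimatethm1} was absorbed by Sobolev into the left-hand side $\|G_k(u)^\tau\|_{H_0^1}^2$. In the present regime $\tau$ can be $<1$, the left-hand side is $\|\Psi_1(G_k(u))\|_{H_0^1}^2\sim\|(1+G_k(u))^{\tau-1}G_k(u)\|_{H_0^1}^2$, and this does \emph{not} control $\left(\int_\Omega G_k(u)^{2^*\tau}\right)^{1/2^*}$ (the quantities are comparable at infinity but not near zero, and the Sobolev bound only gives $L^{2^*}$ of $\Psi_1(G_k(u))$). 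The paper resolves this by using a genuinely different three-exponent split, with exponents $N,\,2,\,2^*$, producing the factor $\left(\int_\Omega\frac{|\nabla G_k(u)|^{N(q-1)}}{u^{N\theta}}\right)^{1/N}$; bounding this quantity via item (1) is a dedicated nontrivial step, and it is precisely here that the standing hypothesis $q<1+\frac{2}{N}$ (so $N(q-1)<2$) is used. Your sketch omits this entirely and would stall at the middle Hölder factor.

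Second, for item (1) you write that ``the remaining bound on $\|T_{k^*+1}(u)\|_{H_0^1(\Omega)}$ follows by testing with $T_{k^*+1}(u)$ and using $q<2$, exactly as in Proposition~\ref{estimatethm1}.'' That step in Proposition~\ref{estimatethm1} relied on $\int_\Omega|\nabla G_{k^*+1}(u)|^2\leq C$ (available there because $\sigma\geq1$). Here one only has the degenerate-weight estimate $\int_\Omega\frac{|\nabla G_k(u)|^2}{(1+G_k(u))^{2(1-\sigma)}}\leq C$ for $k\geq k^*+1$, which does \emph{not} give $\nabla G_{k^*+1}(u)\in L^2(\Omega)$ when $\sigma<1$, so the decomposition $\int_\Omega|\nabla u|^q=\int_\Omega|\nabla T_{k^*+1}(u)|^q+\int_\Omega|\nabla G_{k^*+1}(u)|^q$ cannot be closed the same way. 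The paper instead proves the weighted bound $\int_\Omega\frac{|\nabla T_k(u)|^2}{(1+u)^{2(1-\sigma)}}\left(\frac{1+\sigma u}{1+u}\right)^2\leq C$ directly, by testing with $T_k(u)\,y(u)$ for an auxiliary $y$ defined by the ODE $y'+sy=z$ with $z(s)=\frac{1}{(1+s)^{2(1-\sigma)}}\left(\frac{1+\sigma s}{1+s}\right)^2$ and the crucial comparison $y\leq Cz$. This is a separate idea, not a routine adaptation. Relatedly, the paper's functions $\Phi_1,\Phi_2$ carry a parameter $\zeta>0$ which must be chosen small enough at the end so that $F^*-\zeta^{2^*\sigma}\geq F^*/2>0$; your sketch does not acknowledge this and uses $(1+t)$ from the start, which would lose the leverage needed to make the extra $\zeta^{2^*\sigma}$ term in the functional inequality harmless.
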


\begin{remark}
Concerning the hypothesis in item (1) in the previous result, is is easy to see, using that $r$ is decreasing, $r(b(q))=1$ and $r(0)=\frac{N(q-1)}{q}$, that it is equivalent to $h\in L^p(\Omega)$ for some
\[p\in\left[\max\{r(\alpha),1\},\frac{N(q-1)}{q}\right]\setminus\{1\}.\]
This assumption is obviously weaker than  $p=\frac{N(q-1)}{q}$, which is imposed in \cite[Theorem 5.8]{GMP}. Actually, if $\alpha\geq b(q)$, then it is enough to impose that $p>1$. Notice also that, if $p=r(\theta)$, then $\tau=\sigma(\theta)$. In consequence, since the function $p\mapsto\tau(p)$ is increasing, it holds that $\tau>\sigma(0)$ if $p>r(0)$.
\end{remark}

\begin{proof}[Proof of Proposition \ref{estimatethm2}]
{\bf Proof of (1).} Let us consider the following functions defined for every $t\geq 0$:
\begin{align}
\phi(t)&=\frac{1}{(\zeta+t)^{1-\sigma(\theta)}}\left(\frac{t}{\zeta+t}\right)^\frac{1}{2},
\\
\Phi_1(t)&=\int_0^t\phi(s)ds,
\\
\Phi_2(t)&=\int_0^t\phi(s)^2 ds,
\end{align}
where $\zeta>0$ will be fixed later.

First of all observe that
\[\nabla v\nabla\Phi_2(v)=|\nabla\Phi_1(v)|^2\]
for any $v\in H_0^1(\Omega)$. Moreover, using that $\frac{2(1-\theta)}{2-q}-2(1-\sigma(\theta))=2^*\sigma(\theta)$ and $2\sigma(\theta)-1=\frac{2^*\sigma(\theta)}{r(\theta)'}$, it can be proved respectively that
\begin{equation}\label{ineqphi1}
\left(t^{-\theta}\phi(t)^{-q}\Phi_2(t)\right)^{\frac{2}{2-q}}\leq C\left(\Phi_1(t)^{2^*}+\zeta^{2^*\sigma(\theta)}\right)\quad\forall t\geq 0.
\end{equation}
and 
\begin{equation}\label{ineqphi2}
\Phi_2(t)\leq C\Phi_1(t)^\frac{2^*}{r(\theta)'}\quad\forall t\geq 0.
\end{equation}

For $k>0$, let us take $\Phi_2(G_k(u))\in H_0^1(\Omega)\cap L^\infty(\Omega)$ as test function in the weak formulation of \eqref{auxproblem}, so that we obtain
\begin{equation}\label{eq2}
\beta\int_\Omega u \Phi_2(G_k(u))+\int_\Omega |\nabla \Phi_1(G_k(u))|^2=\int_\Omega\left(\mu(x)\frac{|\nabla G_k(u)|^q}{u^{\alpha}}+ |h(x)|\right)\Phi_2(G_k(u)).
\end{equation}

Let us now estimate the nonlinear term. Thanks to \eqref{ineqphi1} we derive that
\begin{align*}
\int_\Omega \mu(x)&\frac{|\nabla G_k(u)|^q}{u^\alpha} \Phi_2(G_k(u))\leq \frac{\|\mu\|_{L^\infty(\Omega)}}{k^{\alpha-\theta}}\int_{\{u\geq k\}} |\nabla \Phi_1(G_k(u))|^q \frac{\Phi_2(G_k(u))}{G_k(u)^\theta \phi(G_k(u))^q}
\\
\leq&C\left(\int_\Omega |\nabla \Phi_1(G_k(u))|^2\right)^\frac{q}{2}\left(\int_{\{u\geq k\}} \left(\frac{\Phi_2(G_k(u))}{G_k(u)^\theta \phi(G_k(u))^q}\right)^{\frac{2}{2-q}}\right)^{1-\frac{q}{2}}
\\
\leq&C\left(\int_\Omega |\nabla \Phi_1(G_k(u))|^2\right)^\frac{q}{2}\left(\int_\Omega\left(\Phi_1(G_k(u))^{2^*}+\zeta^{2^*\sigma}\right)\right)^{1-\frac{q}{2}}
\\
\leq&C \left(\int_\Omega |\nabla \Phi_1(G_k(u))|^2\right)^\frac{q}{2}\left(\left(\int_\Omega |\nabla \Phi_1(G_k(u))|^2\right)^{\frac{2^*}{2}\left(1-\frac{q}{2}\right)}+\zeta^{2^*\sigma\left(1-\frac{q}{2}\right)}\right).
\end{align*}

We now focus on the last term in \eqref{eq2}. Using \eqref{ineqphi2} we deduce that
\begin{align*}
\int_\Omega |h(x)|\Phi_2(G_k(u))&= \int_{\{|h(x)|\leq\beta u\}}|h(x)|\Phi_2(G_k(u))+\int_{\{|h(x)|>\beta u\}}|h(x)|\Phi_2(G_k(u))
\\
& \leq \beta\int_\Omega u \Phi_2(G_k(u))+C\int_{\{|h(x)|>\beta k\}}|h(x)|\Phi_1(G_k(u))^\frac{2^*}{r'}
\\
&\leq\beta\int_\Omega u\Phi_2(G_k(u))+C \left(\int_{\{|h(x)|\geq \beta k\}}|h(x)|^{r}\right)^\frac{1}{r}\left(\int_\Omega \Phi_1(G_k(u))^{2^*}\right)^\frac{1}{r'}
\\
&\leq\beta\int_\Omega u\Phi_2(G_k(u))+C \left(\int_{\{|h(x)|\geq \beta k\}}|h(x)|^{r}\right)^\frac{1}{r}\left(\int_\Omega|\nabla \Phi_1(G_k(u)|^2\right)^\frac{2^*}{2r'}.
\end{align*}

If we denote $Y_k=\|\Phi_1(G_k(u))\|_{H_0^1(\Omega)}$, we have proved so far that

\[Y_k^2\leq C Y_k^q\left(Y_k^{2^*\left(1-\frac{q}{2}\right)}+\zeta^{2^*\sigma\left(1-\frac{q}{2}\right)}\right)+C\|h\chi_{\{|h(x)|\geq\beta k\}}\|_{L^r(\Omega)}Y_k^{\frac{2^*}{r'}}.\]

Hence, using Young's inequality we obtain that
\[\frac{1}{2} Y_k^2\leq C Y_k^{q+2^* \left(1-\frac{q}{2}\right)}+ C\zeta^{2^*\sigma}+C\|h\chi_{\{|h(x)|\geq\beta k\}}\|^{\frac{2r'}{2r'-2^*}}_{L^r(\Omega)},\]
or equivalently,
\begin{equation}\label{ykineq2}
C_1 Y_k^2-C_2 Y_k^{q+2^* \left(1-\frac{q}{2}\right)}\leq\zeta^{2^*\sigma}+\|h\chi_{\{|h(x)|\geq\beta k\}}\|^{\frac{2r'}{2r'-2^*}}_{L^r(\Omega)},
\end{equation}
for some $C_1, C_2>0$ independent of $k$ and $\zeta$.

Let us define the function $F:[0,+\infty)\to\re$ by
\[F(Y)=C_1 Y^2-C_2 Y^{q+2^* \left(1-\frac{q}{2}\right)}\quad\forall Y\geq 0.\]
Since $q<2$, it easy to see that 
\[2<q+2^*\left(1-\frac{q}{2}\right).\]
This means that $F$ is a concave function, positive near zero, negative far from zero, and has a unique maximum $F^*>0$ with a corresponding unique maximizer $Z^*>0$.          

We now choose $\zeta=\min\left\{1,\left(\frac{F^*}{2}\right)^{\frac{1}{2^*\sigma}}\right\}$. Thus, 
\[\max_{Y\geq 0}(F(Y)-\zeta^{2^*\sigma})=F^*-\zeta^{2^*\sigma}\geq\frac{F^*}{2}>0.\]
Let us now consider
\[k^*=\inf\left\{k> 0:\|h\chi_{\{|h(x)|\geq\beta k\}}\|^{\frac{2r'}{2r'-2^*}}_{L^r(\Omega)}<F^*-\zeta^{2^*\sigma}\right\}.\]
Hence, for any $\delta>0$, the equation $F(Y)=\zeta^{2^*\sigma}+\|h\chi_{\{h(x)\geq\beta (k^*+\delta)\}}\|^{\frac{2r'}{2r'-2^*}}_{L^r(\Omega)}$ has two roots $Z_1$ and $Z_2$ such that $Z_1<Z^*<Z_2$. By virtue of inequality \eqref{ykineq2}, it holds that for every $k\geq k^*+\delta$, either $Y_k\leq Z_1$ or $Y_k\geq Z_2$. But the function $k\mapsto Y_k$ is continuous and tends to zero as $k$ tends to infinity. Hence, 
\[Y_{k^*+\delta}\leq Z_1<Z^*.\]
If we let now $\delta$ tend to zero, we obtain that 
\[Y_{k^*}=\|\Phi_1(G_{k^*}(u))\|_{H_0^1(\Omega)}\leq Z^*.\] 
Notice that 
\begin{align*}
\|\Phi_1(G_k(u))\|^2_{H_0^1(\Omega)}&=\int_\Omega\frac{|\nabla G_k(u)|^2 G_k(u)}{(\zeta+G_k(u))^{2(1-\sigma)+1}}
\geq \int_\Omega\frac{|\nabla G_k(u)|^2 G_k(u)}{(1+G_k(u))^{2(1-\sigma)+1}}
\\
&\geq \int_\Omega\frac{|\nabla u|^2(u-k)}{(1+u-k)^{2(1-\sigma)+1}}\chi_{\{u\geq k+1\}}
\\
&\geq \frac{1}{2}\int_\Omega\frac{|\nabla u|^2}{(1+u-k)^{2(1-\sigma)}}\chi_{\{u\geq k+1\}}
\\
&\geq \frac{1}{2^{2(1-\sigma)+1}}\int_\Omega\frac{|\nabla G_{k+1}(u)|^2}{(G_{k+1}(u)+1)^{2(1-\sigma)}}.
\end{align*}
Hence, we have that
\begin{equation}\label{Gkestimate}
\int_\Omega\frac{|\nabla G_k(u)|^2}{(G_k(u)+1)^{2(1-\sigma)}}\leq C\quad\forall k\geq k^*+1.
\end{equation}
Fix $k\geq k^*+1$ independent of $\|h\|_{L^r(\Omega)}$. Note again that this can be done since $\|h\|_{L^r(\Omega)}\leq C$, so $\|h\chi_{\{|h(x)|\geq\beta k\}}\|_{L^r(\Omega)}\to 0$ uniformly in $\|h\|_{L^r(\Omega)}$ as $k\to\infty$. Then, estimate \eqref{Gkestimate} implies that
\begin{align*}
\left\|\frac{u}{(1+u)^{1-\sigma}}\right\|^2_{H_0^1(\Omega)}&=\int_\Omega\frac{|\nabla u|^2}{(1+u)^{2(1-\sigma)}}\left(\frac{1+\sigma u}{1+u}\right)^2
\\
&=\int_\Omega\frac{|\nabla G_k(u)|^2}{(1+u)^{2(1-\sigma)}}\left(\frac{1+\sigma u}{1+u}\right)^2+\int_\Omega\frac{|\nabla T_k(u)|^2}{(1+u)^{2(1-\sigma)}}\left(\frac{1+\sigma u}{1+u}\right)^2
\\
&\leq C+\int_\Omega\frac{|\nabla T_k(u)|^2}{(1+u)^{2(1-\sigma)}}\left(\frac{1+\sigma u}{1+u}\right)^2.
\end{align*}

We claim now that
\begin{equation}\label{Tkestimate}
\int_\Omega\frac{|\nabla T_k(u)|^2}{(1+u)^{2(1-\sigma)}}\left(\frac{1+\sigma u}{1+u}\right)^2\leq C.
\end{equation}
Indeed, let us define the real functions for all $s\geq 0$:
\begin{align*}
z(s)&=\frac{1}{(1+s)^{2(1-\sigma)}}\left(\frac{1+\sigma s}{1+s}\right)^2,
\\
y(s)&=e^{-\frac{s^2}{2}}\int_0^s e^\frac{t^2}{2}z(t)dt.
\end{align*}
It is easy to see that
\[y'(s)+sy(s)=z(s)\quad\forall s\geq 0,\]
and also that
\[y(s)\leq Cz(s)\quad\forall s\geq 0, \text{ for some }C>0.\]
Now we take $T_k(u)y(u)$ as test function in the weak formulation of \eqref{auxproblem} and get
\begin{equation}\label{Tkequation}
\int_\Omega y(u)|\nabla T_k(u)|^2+\int_\Omega T_k(u)y'(u)|\nabla u|^2=\int_\Omega \left(\mu(x)\frac{|\nabla u|^q}{u^\alpha}+h(x)\right)T_k(u)y(u).
\end{equation}
Concerning the right hand side of \eqref{Tkequation}, observe that
\begin{equation}\label{equ1}
\int_\Omega y(u)|\nabla T_k(u)|^2+\int_\Omega T_k(u)y'(u)|\nabla u|^2 =\int_\Omega |\nabla T_k(u)|^2 z(u)+k\int_\Omega y'(u)|\nabla G_k(u)|^2,
\end{equation}
where
\begin{equation}\label{equ2}
-k\int_\Omega y'(u)|\nabla G_k(u)|^2 \leq \int_\Omega \frac{k y(u)}{u}|\nabla G_k(u)|^2\leq C\int_\Omega z(u)|\nabla G_k(u)|^2\leq C.
\end{equation}
Gathering \eqref{Tkequation}, \eqref{equ1} and \eqref{equ2} together we deduce that
\[\int_\Omega z(u)|\nabla T_k(u)|^2\leq C \left(\int_\Omega y(u)|\nabla u|^q +1\right)\leq C\left(\int_\Omega z(u)|\nabla T_k(u)|^q +\int_\Omega z(u)|\nabla G_k(u)|^q +1\right).\]
We finally arrive at \eqref{Tkestimate} by using Young's inequality, by the fact that $z$ is a bounded function, and also by virtue of \eqref{Gkestimate}.

\smallskip

Thus, we have proved item (1).

{\bf Proof of (2).} 
Let us consider the following functions defined on $[0,+\infty)$:
\begin{align*}
\psi(t)&=\frac{1}{(1+t)^{1-\tau}}\left(\frac{t}{1+t}\right)^{\frac{1}{2}}\quad\forall t\geq 0,
\\
\Psi_1(t)&=\int_0^t \psi(s)ds\quad\forall t\geq 0,
\\
\Psi_2(t)&=\int_0^t\psi(s)^2ds\quad\forall t\geq 0.
\end{align*}
It can be easily proved that
\begin{equation}\label{psiineq1}
\Psi_2(t)\leq C\psi(t)\Psi_1(t)\quad\forall t\geq 0.
\end{equation}
Moreover, since $2\tau-1=\frac{2^*\tau}{p'}$, it is easy to prove that 
\begin{equation}\label{psiineq2}
\Psi_2(t)\leq C\Psi_1(t)^{\frac{2^*}{p'}}\quad\forall t\geq 0.
\end{equation}

For some $k>0$ we take $\Psi_2(G_k(u))$ as test function in the weak formulation of \eqref{auxproblem}, so that we obtain
\begin{equation}\label{ineq1item2}
\int_\Omega|\nabla\Psi_1(G_k(u))|^2\leq C\int_\Omega\left(\frac{|\nabla G_k(u)|^q}{u^\alpha}+|h(x)|\right)\Psi_2(G_k(u)).
\end{equation}

Concerning the singular term, using \eqref{psiineq1} and H\"older's and Sobolev's inequalities, we deduce that
\begin{align}\label{ineq2item2}
\nonumber\int_\Omega &\frac{|\nabla G_k(u)|^q}{u^\alpha}\Psi_2(G_k(u))\leq C\int_\Omega\frac{|\nabla G_k(u)|^{q-1}}{u^\theta}|\nabla G_k(u)|\psi(G_k(u))\Psi_1(G_k(u))
\\
&\leq C\left(\int_\Omega\frac{|\nabla G_k(u)|^{N(q-1)}}{u^{N\theta}}\right)^\frac{1}{N}\left(\int_\Omega|\nabla G_k(u)|^2\psi(G_k(u))^2\right)^{\frac{1}{2}}\left(\int_\Omega\Psi_1(G_k(u))^{2^*}\right)^{\frac{1}{2^*}}
\\
\nonumber &\leq C\left(\int_\Omega\frac{|\nabla G_k(u)|^{N(q-1)}}{u^{N\theta}}\right)^\frac{1}{N}\int_\Omega|\nabla\Psi_1(G_k(u))|^2.
\end{align}

Now we claim that 
\[\int_\Omega\frac{|\nabla G_k(u)|^{N(q-1)}}{u^{N\theta}}\leq C\]
for some $k>0$ large enough. Indeed, since $q<1+\frac{2}{N}$, we have that
\[\left(\frac{2}{N(q-1)}\right)'[(1-\sigma)N(q-1)-N\theta]=2^*\sigma,\]
and thus, for any $k>0$,
\begin{align*}
\int_\Omega\frac{|\nabla G_k(u)|^{N(q-1)}}{u^{N\theta}}&\leq\int_\Omega\frac{|\nabla G_k(u)|^{N(q-1)}}{(1+G_k(u))^{N\theta}}
\\
&\leq C\int_\Omega\left|\nabla\frac{G_k(u)}{(1+G_k(u))^{1-\sigma}}\right|^{N(q-1)}(1+G_k(u))^{(1-\sigma)N(q-1)-N\theta}
\\
&\leq C\left(\int_\Omega\left|\nabla\frac{G_k(u)}{(1+G_k(u))^{1-\sigma}}\right|^2\right)^{\frac{N(q-1)}{2}}\left(\int_\Omega (1+G_k(u))^{2^*\sigma}\right)^{1-\frac{N(q-1)}{2}}.
\end{align*}
Therefore, by item (1),
\[\int_\Omega\frac{|\nabla G_k(u)|^{N(q-1)}}{u^{N\theta}}\leq C\quad\forall k\geq k^*+1,\]
and the proof of the claim is done. As a consequence, it can be shown that the limit
\[\lim_{k\to\infty}\left(\int_\Omega\frac{|\nabla G_k(u)|^{N(q-1)}}{u^{N\theta}}\right)^\frac{1}{N}=0,\]
is uniform in $u$. Hence, from \eqref{ineq2item2} we deduce that there exists $k_0>0$ independent of $u$ such that
\[\int_\Omega \frac{|\nabla G_k(u)|^q}{u^\alpha}\Psi_2(G_k(u))\leq C\int_\Omega|\nabla\Psi_1(G_k(u))|^2\quad\forall k\geq k_0.\]
Then, we derive from \eqref{ineq1item2} that
\[\int_\Omega|\nabla\Psi_1(G_k(u))|^2\leq C\int_\Omega|h(x)|\Psi_2(G_k(u))\quad\forall k\geq k_0.\]
By virtue of \eqref{psiineq2} we immediately derive the estimate
\[\int_\Omega|\nabla\Psi_1(G_{k_0}(u))|^2\leq C.\]

We conclude the proof of part (2) of the proposition similarly as part (1).

\smallskip

{\bf Proof of (3).} It follows the same steps of part (2), but considering this time 
\[\psi(t)=t^{\tau-1}\quad\forall t\geq 0.\]
Notice that this choice is valid as $\tau>1$ whenever $p>(2^*)'$. 

\smallskip

{\bf Proof of (4).} Since $\tau\to+\infty$ as $p\to \frac{N}{2}$, part (4) is a clear consequence of part (3).

\smallskip

{\bf Proof of (5).} It follows again the line of part (2) but with $\psi\equiv 1$, so that $\Psi_1(t)=\Psi_2(t)=t$ for all $t\geq 0$. The proof finishes by using the well-known Stampacchia's Lemma, as in Proposition \ref{estimatethm1}.
\end{proof}

We prove now a result analogous to Propositions \ref{estimatethm1} and \ref{estimatethm2} for $q$ small.

\begin{proposition}\label{estimateprop3}
Assume that $q,\alpha,\beta,\mu$ satisfy \eqref{parameters} and let $r,\sigma$ be defined as in \eqref{keyparameters}. Assume also that 
\begin{equation*}
q<\frac{N}{N-1}.
\end{equation*}
Then, for all $C>0$ and $p\geq 1$, there exist $M, k>0$ such that, for any $h\in L^p(\Omega)$ with $\|h\|_{L^p(\Omega)}\leq C$ and for any solution $u$ to problem \eqref{auxproblem}, the following holds:
\begin{enumerate}
\item If $p=1$, then $\|u\|_{\mathcal{M}^{\frac{N}{N-2}}(\Omega)}+\||\nabla u|\|_{\mathcal{M}^{\frac{N}{N-1}}(\Omega)}\leq M$;
\\
\item if $1<p\leq(2^*)'$, then $\|u(1+u)^{\tau-1}\|_{H_0^1(\Omega)}\leq M$, where $\tau=\frac{p(N-2)}{2(N-2p)}$;
\\
\item if $(2^*)'<p<\frac{N}{2}$, then $\|u^\tau\|_{H_0^1(\Omega)}\leq M$, where $\tau=\frac{p(N-2)}{2(N-2p)}$;
\\
\item if $p=\frac{N}{2}$, then $\|u^{\tau_0}\|_{H_0^1(\Omega)}\leq M$ for all $\tau_0<\infty$, and
\\
\item if $p>\frac{N}{2}$, then $\|u\|_{L^\infty(\Omega)}\leq M$.
\end{enumerate} 
\end{proposition}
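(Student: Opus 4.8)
The proof has the same two-layer structure as those of Propositions \ref{estimatethm1} and \ref{estimatethm2}: item (1) is the base a priori estimate, and items (2)--(5) are deduced from it by a bootstrap that progressively uses the summability of $h$. I describe the two layers.

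\smallskip

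\emph{Item (1).} This is the only genuinely new ingredient; it is a Boccardo--Gallou\"et type estimate adapted to the singular coefficient. The plan is to test the weak formulation of \eqref{auxproblem} in turn with the truncations $T_k(u)$ and with bounded concave truncations $\phi(u)$ (that is, $\phi\in C^1([0,\infty))$ with $\phi(0)=0$, $0\le\phi\le1$, $\phi'>0$, for instance $\phi(t)=1-(1+t)^{-\gamma}$), always discarding the nonnegative term $\beta\int_\Omega u\,\phi$. The singular weight is harmless: from $T_k(u)\le u$ and $\phi(u)\le u$ one gets, on each level set,
\[
\mu(x)\,\frac{|\nabla u|^q}{u^\alpha}\,T_k(u)\le\|\mu\|_{L^\infty(\Omega)}\,u^{1-\alpha}\,|\nabla u|^q,
\]
and since $\alpha\in[0,q-1)\subset[0,1)$ the factor $u^{1-\alpha}$ is bounded on $\{u\le k\}$ and is a fixed power of $u$ on $\{u>k\}$; moreover $\mu\frac{|\nabla u|^q}{u^\alpha}\in L^1_{\mathrm{loc}}(\Omega)$ is part of the notion of solution, so no non-integrable contribution appears near $\partial\Omega$. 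Splitting $|\nabla u|^q$ by H\"older's inequality with exponents $\frac2q$ and $\frac2{2-q}$ against the weights generated by the test functions, one is left with an integral of a power of $u$, and the hypothesis $q<\frac N{N-1}$ is exactly what makes that power strictly subcritical for the Marcinkiewicz exponent $\frac N{N-2}$ (the leftover power being at most $2^*\sigma(0)=\frac{N(q-1)}{2-q}$, with $\sigma$ as in \eqref{keyparameters}, and $2^*\sigma(0)<\frac N{N-2}$ if and only if $q<\frac N{N-1}$), so that the gradient term is a genuine lower-order perturbation. Consequently the resulting level-set inequalities close, and the classical Stampacchia/Boccardo--Gallou\"et lemmas give
\[
\|u\|_{\mathcal M^{\frac N{N-2}}(\Omega)}+\||\nabla u|\|_{\mathcal M^{\frac N{N-1}}(\Omega)}\le M,
\]
with $M$ (and the threshold level $k$) depending only on $C$, $q$, $\alpha$, $\beta$, $\|\mu\|_{L^\infty(\Omega)}$ and $\Omega$; as in Propositions \ref{estimatethm1} and \ref{estimatethm2}, the independence from the particular $h$ with $\|h\|_{L^1(\Omega)}\le C$ follows from the absolute continuity of the integral applied to $\|h\,\chi_{\{|h|\ge\beta k\}}\|_{L^1(\Omega)}$.

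\smallskip

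\emph{Items (2)--(5).} These follow the same lines as the proofs of items (2)--(5) of Proposition \ref{estimatethm2}, with the estimate from item (1) above playing the role there played by item (1) of Proposition \ref{estimatethm2}. One uses the same test functions --- $\Psi_2(G_k(u))$ built from $\psi(t)=\frac1{(1+t)^{1-\tau}}(\frac t{1+t})^{1/2}$ for item (2), from $\psi(t)=t^{\tau-1}$ for item (3), and the obvious limiting choices for items (4) and (5) --- and the same exponent identities and Young/H\"older/Sobolev inequalities, all of which remain valid because $q<\frac N{N-1}<1+\frac2N$. The only new point is to check that the gradient--power integrals $\int_\Omega\frac{|\nabla G_k(u)|^{N(q-1)}}{u^{N\alpha}}$ that arise are uniformly bounded and tend to $0$ as $k\to\infty$, uniformly in $u$; here the situation is in fact simpler than in Proposition \ref{estimatethm2}, since item (1) already supplies the Marcinkiewicz bound on $|\nabla u|$: as $q<\frac N{N-1}$ forces $N(q-1)<\frac N{N-1}$, the family $\{|\nabla u|^{N(q-1)}\}$ is equi-integrable, and $|\{u>k\}|\to0$ uniformly by $\|u\|_{\mathcal M^{N/(N-2)}(\Omega)}\le M$, so
\[
\int_{\{u>k\}}\frac{|\nabla u|^{N(q-1)}}{u^{N\alpha}}\le k^{-N\alpha}\int_{\{u>k\}}|\nabla u|^{N(q-1)}\longrightarrow0
\]
uniformly in $u$. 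With this in hand the absorptions of the singular term go through as in Proposition \ref{estimatethm2}, yielding items (2), (3) and (4); item (5) then follows, as there, from the Stampacchia lemma applied directly.

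\smallskip

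\emph{Main obstacle.} The delicate step is item (1): running the Boccardo--Gallou\"et iteration while keeping every constant independent both of the solution $u$ and of $h$ (subject only to $\|h\|_{L^1(\Omega)}\le C$), and making sure that the singular factor $u^{-\alpha}$ is truly harmless --- globally, because $u^{-\alpha}T_k(u)\le u^{1-\alpha}$ with $\alpha<1$, and near $\partial\Omega$, because $\mu\frac{|\nabla u|^q}{u^\alpha}\in L^1_{\mathrm{loc}}(\Omega)$. Once item (1) is available, items (2)--(5) amount to a (slightly streamlined) transcription of the corresponding parts of the proof of Proposition \ref{estimatethm2}.
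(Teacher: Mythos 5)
Your items (2)--(5) essentially reproduce the paper's argument: follow the proof of the corresponding items in Proposition \ref{estimatethm2} and control the extra integral $\int_\Omega|\nabla G_k(u)|^{N(q-1)}/u^{N\theta}$ using item (1). Your equi-integrability argument for this integral is a valid (slightly more verbose) variant of the paper's one-line bound $\int_\Omega|\nabla G_k(u)|^{N(q-1)}/u^{N\theta}\le C\bigl(\int_\Omega|\nabla G_k(u)|^{N/(N-1)}\bigr)^{(N-1)(q-1)}$, which follows from $N(q-1)<N/(N-1)$ and item (1). No issue there.

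Item (1), however, is where your proposal diverges from the paper and where there is a real gap. The paper tests the equation with $T_j(G_k(u))$, $j,k>0$. Because this test function is supported in $\{u\ge k\}$, the singular factor $u^{-\alpha}$ is simply bounded by $k^{-\alpha}$ there, so the gradient contribution collapses to $jC\int_\Omega|\nabla G_k(u)|^q$ and the resulting estimate $\int_\Omega|\nabla T_j(G_k(u))|^2\le jC\bigl(\int_\Omega|\nabla G_k(u)|^q+\|h\chi_{\{|h|\ge\beta k\}}\|_{L^1(\Omega)}\bigr)$ is linear in $j$. This is exactly the hypothesis of \cite[Lemma 4.2]{BBGGPV}, which yields $\|\nabla G_k(u)\|_{\mathcal M^{N/(N-1)}(\Omega)}\le C\bigl(\int_\Omega|\nabla G_k(u)|^q+\|h\chi_{\{|h|\ge\beta k\}}\|_{L^1(\Omega)}\bigr)$; the condition $q<\frac{N}{N-1}$ then gives the embedding into $L^q$, and the crucial step is to close the \emph{nonlinear} inequality $F(Y_k)\le\|h\chi_{\{|h|\ge\beta k\}}\|_{L^1(\Omega)}$, with $Y_k=\|\nabla G_k(u)\|_{L^q(\Omega)}$ and $F(Y)=CY-Y^q$, via the concavity of $F$ and the continuity and decay of $k\mapsto Y_k$, exactly as in Propositions \ref{estimatethm1} and \ref{estimatethm2}.

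Your proposal instead tests with $T_k(u)$ and concave truncations $\phi(u)$ of $u$ itself. The bound $u^{-\alpha}T_k(u)\le u^{1-\alpha}$ kills the singularity near $u=0$ but leaves a term of the form $\int_\Omega u^{1-\alpha}|\nabla u|^q$; after H\"older you are left with an integral of a power of $u$, and you justify its finiteness by the very Marcinkiewicz bound $u\in\mathcal M^{N/(N-2)}(\Omega)$ you are trying to establish. This is circular: nothing in your argument breaks that circularity, and the phrase ``the resulting level-set inequalities close'' is not a proof. What is missing is precisely the mechanism that handles the self-referential gradient term --- the choice of $G_k(u)$ (rather than $u$) as the object to estimate, and the concavity argument with $F(Y)=CY-Y^q$. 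Once you switch to testing with $T_j(G_k(u))$ and run that argument, your item (1) can be repaired, and the rest of your proposal goes through as written.
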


\begin{proof}
{\bf Proof of (1).} For $j,k>0$, let us take $T_j(G_k(u))\in H_0^1(\Omega)\cap L^\infty(\Omega)$ as test function in the weak formulation of \eqref{auxproblem}. Thus we obtain
\begin{equation}\label{TkEquation}
\beta\int_\Omega uT_j(G_k(u))+\int_\Omega\nabla u\nabla T_j(G_k(u))=\int_\Omega\left(\mu(x)\frac{|\nabla G_k(u)|^q}{u^\alpha}+ |h(x)|\right)T_j(G_k(u)).
\end{equation}

On the one hand, it is clear that
\begin{equation*}
\int_\Omega \nabla u\nabla T_j(G_k(u))=\int_\Omega\left|\nabla  T_j(G_k(u))\right|^2.
\end{equation*}
On the other hand, concerning the right hand side of \eqref{TkEquation}, we obtain that
\begin{align*}
\int_\Omega \left(\mu(x)\frac{|\nabla G_k(u)|^q}{u^\alpha}+|h(x)|\right)T_j(G_k(u))&\leq j C\left(\int_\Omega|\nabla G_k(u)|^q+\int_{\{|h(x)|\geq \beta k\}}|h(x)|\right)
\\
&+\beta\int_\Omega u T_j(G_k(u)).
\end{align*}

In sum, we deduce that
\[\int_\Omega|\nabla  T_j(G_k(u))|^2\leq j C\left(\int_\Omega |\nabla G_k(u)|^q+\int_{\{|h(x)|\geq \beta k\}}|h(x)|\right).\]

Then, we apply \cite[Lemma 4.2]{BBGGPV}, so that we deduce that
\begin{equation*}
\|\nabla G_k(u)\|_{\mathcal{M}^{\frac{N}{N-1}}(\Omega)}\leq C\left(\int_\Omega|\nabla G_k(u)|^q+\int_{\{|h(x)|\geq \beta k\}}|h(x)|\right).
\end{equation*}

Since $q<\frac{N}{N-1}$, we have the immersions
\[\mathcal{M}^{\frac{N}{N-1}}(\Omega)\subset L^{\frac{N}{N-1}}(\Omega)\subset L^q(\Omega).\]
Therefore, 
\[C\|\nabla G_k(u)\|_{L^q(\Omega)}\leq \int_\Omega|\nabla G_k(u)|^q+\int_{\{|h(x)|\geq \beta k\}}|h(x)|.\]

We now consider the function $F:[0,\infty)\to \mathbb{R}$ defined as
\[F(Y)=CY-Y^q\quad\forall Y\geq 0,\]
and we denote 
\[Y_k=\|\nabla G_k(u)\|_{L^q(\Omega)}.\]
Thus we have proved that
\[F(Y_k)\leq \|h\chi_{\{|h(x)|\geq \beta k\}}\|_{L^1(\Omega)}.\]
The proof of this part concludes as in the previous proposition.

{\bf Proof of (2-5).} The proofs of the rest of the items follow the steps of the corresponding ones from Proposition \ref{estimatethm2}. The only part which is not completely straightforward is the proof of the estimate
\[\int_\Omega\frac{|\nabla G_k(u)|^{N(q-1)}}{u^{N\theta}}\leq C\quad\forall k\geq k_0.\]
However, since $q<\frac{N}{N-1}$, then $N(q-1)<\frac{N}{N-1}$, so we deduce that
\[\int_\Omega\frac{|\nabla G_k(u)|^{N(q-1)}}{u^{N\theta}}\leq C\int_\Omega|\nabla G_k(u)|^{N(q-1)}\leq C\left(\int_\Omega|\nabla G_k(u)|^{\frac{N}{N-1}}\right)^{(N-1)(q-1)}.\]
Therefore, the estimate holds by virtue of part (1).
\end{proof}

The same arguments of the proof of Proposition \ref{gmpbootstrap1} (but using Propositions \ref{estimatethm1}, \ref{estimatethm2} and \ref{estimateprop3} instead of the results in \cite{GMP}) are valid also for proving the main result of this subsection.

\begin{proposition}\label{mainestimate}
Assume that \eqref{H1} holds. If $q>\frac{N}{N-1}$, suppose also that \eqref{mrcondition} is satisfied. Furthermore, if $q\geq 1+\frac{2}{N}$, assume that \eqref{qlargecondition} is satisfied too. Then, for every $\lambda_0>0$, there exists $C>0$ such that
\[\|u\|_{L^\infty(\Omega)}\leq C\]
for every solution $u$ to \eqref{problem} with $\lambda>\lambda_0$.
\end{proposition}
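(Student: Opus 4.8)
The plan is to recast \eqref{problem} as an instance of the auxiliary problem \eqref{auxproblem} and then run exactly the bootstrap from the proof of Proposition \ref{gmpbootstrap1}, invoking Propositions \ref{estimatethm1}, \ref{estimatethm2} and \ref{estimateprop3} in place of the results of \cite{GMP}. The first observation is that, by Remark \ref{lambda1remark1}, any solution to \eqref{problem} forces $\lambda<\lambda_1$; hence it suffices to bound the solutions corresponding to $\lambda\in(\lambda_0,\lambda_1)$ (if $\lambda_0\geq\lambda_1$ there is nothing to prove). Crucially, $\lambda$ is now also bounded \emph{above} by $\lambda_1$, which is what makes the estimates uniform. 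Fixing $\beta=1$ and setting $h(x)=(1+\lambda)u(x)+f(x)\geq 0$, a solution $u$ to \eqref{problem} is a solution to \eqref{auxproblem} with this $h$, and Lemma \ref{lebesguelemma}, together with $\lambda<\lambda_1$ and $0\lneqq f\in L^{p_0}(\Omega)$, yields $\|h\|_{L^{p_1}(\Omega)}\leq C$ with $p_1=\min\{m,p_0\}>1$, where $m=\frac{(q-\alpha)N}{N-q+1}>q-\alpha>1$ and $C$ is independent of $u$ and of $\lambda>\lambda_0$.

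Next I would translate the hypotheses into statements about the starting summability of $h$. A direct computation shows that \eqref{mrcondition} is equivalent to $r(\alpha)\leq m$ and that \eqref{qlargecondition} is equivalent to $(2^*)'\leq m$ (recall $r(a(q))=(2^*)'$). Consequently, in each regime of $q$ the exponent $p_1$ falls in the range for which the first item of the relevant proposition applies: Proposition \ref{estimatethm1} when $1+\frac{2}{N}\leq q<2$ (here one needs $m\geq\max\{r(\alpha),(2^*)'\}$, guaranteed by \eqref{mrcondition} if $\alpha\leq a(q)$ and by \eqref{qlargecondition} if $\alpha>a(q)$), Proposition \ref{estimatethm2} when $\frac{N}{N-1}<q<1+\frac{2}{N}$ (using $m\geq\max\{r(\alpha),1\}$, which follows from \eqref{mrcondition} together with $m>1$), and Proposition \ref{estimateprop3} when $q<\frac{N}{N-1}$ (no extra condition, as that proposition starts from $p=1<m$). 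The borderline value $q=\frac{N}{N-1}$ is reduced to the preceding case by the Young-inequality perturbation $\mu\frac{|\nabla u|^q}{u^\alpha}\leq\mu\frac{|\nabla u|^{q+\varepsilon}}{u^\alpha}+C_\varepsilon$, exactly as in the last step of the proof of Proposition \ref{gmpbootstrap1}.

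Then comes the iteration itself. Each application of the relevant proposition either returns $\|u\|_{L^\infty(\Omega)}\leq M$ directly, namely when the current summability exponent of $h$ exceeds $\frac{N}{2}$, or it produces an estimate $\|u^{\tau}\|_{H_0^1(\Omega)}\leq M$ (or $\|u(1+u)^{\tau-1}\|_{H_0^1(\Omega)}\leq M$, or a Marcinkiewicz bound in the first step of Proposition \ref{estimateprop3}), hence $\|u\|_{L^{2^*\tau}(\Omega)}\leq M$, which upgrades the summability exponent of $u$, and therefore of $h$, from $p$ to $p^{**}=\frac{Np}{N-2p}>p$. As in Proposition \ref{gmpbootstrap1}, the sequence of exponents so obtained is strictly increasing and cannot converge to a finite limit $\ell\leq\frac{N}{2}$ (such a limit would solve $\ell=\frac{N\ell}{N-2\ell}$, i.e., $\ell=0$, a contradiction), so after finitely many steps one reaches an exponent $\geq\frac{N}{2}$ and the last item of the corresponding proposition gives $\|u\|_{L^\infty(\Omega)}\leq M$. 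Since at every step the constant furnished by the proposition depends only on the $L^p$ norm of $h$, which is controlled by $(1+\lambda_1)\|u\|_{L^p(\Omega)}+\|f\|_{L^{p_0}(\Omega)}$ from the previous step, and the number of steps is finite, the resulting bound is uniform in $u$ and in $\lambda\in(\lambda_0,\lambda_1)$, which is the assertion.

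The main obstacle, beyond the bookkeeping across the three regimes of $q$, is twofold: recognizing that restricting to $\lambda<\lambda_1$ is what renders $h$ uniformly bounded (so the constants in Propositions \ref{estimatethm1}--\ref{estimateprop3} can be chosen independently of $\lambda$), and checking that \eqref{mrcondition} and \eqref{qlargecondition} are precisely the inequalities $r(\alpha)\leq m$ and $(2^*)'\leq m$ that make the first step of the bootstrap admissible. Once these are in place, the iteration proceeds verbatim as in the nonsingular Proposition \ref{gmpbootstrap1}.
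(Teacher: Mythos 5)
Your proposal is correct and follows exactly the approach the paper intends: the paper's own "proof" of Proposition \ref{mainestimate} is a one-sentence pointer to run the bootstrap of Proposition \ref{gmpbootstrap1} with Propositions \ref{estimatethm1}--\ref{estimateprop3} in place of the results of \cite{GMP}, and you have supplied precisely the missing details, including the correct identifications that \eqref{mrcondition} is $r(\alpha)\leq m$ and \eqref{qlargecondition} is $(2^*)'\leq m$, which are exactly what makes the first step of the iteration admissible in each regime of $q$.
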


\begin{remark}
Notice that, in principle, one can not apply  Propositions \ref{estimatethm2} nor \ref{estimateprop3} to prove Proposition \ref{mainestimate} in the case $q=\frac{N}{N-1}$. However, for $\varepsilon>0$ small, we have that $\frac{N}{N-1}+\varepsilon<1+\frac{2}{N}$ and 
\[\frac{|\nabla u|^\frac{N}{N-1}}{u^\alpha}\chi_{\{u\geq k\}}\leq \frac{|\nabla u|^{\frac{N}{N-1}+\varepsilon}}{u^\alpha}\chi_{\{u\geq k\}}+C_\varepsilon\]
for any $k>0$ and any solution $u$ to \eqref{problem}. Hence, the conclusions of Proposition \ref{estimatethm2} hold for $q=\frac{N}{N-1}+\varepsilon$.
\end{remark}

\smallskip

\subsection{Proof of the main result and consequences}\text{}

\smallskip

We prove now the main result of the paper.
\begin{proof}[Proof of Theorem \ref{maintheorem}]
Since there is a solution $u_0$ to $(P_0)$, then Proposition \ref{prop:continuum} (see also Remark \ref{rem:continuum}) implies that there exists an unbounded connected set $\Sigma^+$ such that
\[(0,u_0)\in\Sigma^+\subset ([0,+\infty)\times L^\infty(\Omega))\cap \Sigma,\]
where
\[\Sigma=\{(\lambda,u)\in \re\times L^\infty(\Omega):u \text{ is a solution to }\eqref{problem}\}.\] 
We claim that $\Sigma^+$ bifurcates from infinity to the right of the axis $\lambda=0$. Indeed, since \eqref{problem} does not have any solution for $\lambda\geq\lambda_1$ (see Remark \ref{lambda1remark1}), then $\Sigma^+\subset ([0,\lambda_1)\times L^\infty(\Omega))\cap \Sigma$. Therefore, since $\Sigma^+$ is unbounded, then its projection onto $L^\infty(\Omega)$ is unbounded. Now, Proposition \ref{mainestimate} implies that $\Sigma^+\cap((\lambda_0,\lambda_1)\times L^\infty(\Omega))$ is bounded for all $\lambda_0\in (0,\lambda_1)$. That is to say, $\Sigma^+\cap((0,\lambda_0)\times L^\infty(\Omega))$ is unbounded for all $\lambda_0>0$, and our claim is true.

\smallskip

We have proved that there exists a sequence $\{(\lambda_n,u_n)\}\subset\Sigma^+$ such that $\lambda_n\to 0$ and $\|u_n\|_{L^\infty(\Omega)}\to +\infty$ as $n\to +\infty$. We will show now that this fact and the connection of $\Sigma^+$ are enough to proof multiplicity of solutions for all $\lambda>0$ small enough. Indeed, assume by contradiction that there exists another sequence $\{(\mu_n,v_n)\}\subset\Sigma^+$ such that $\mu_n\to 0$ as $n\to\infty$ and $(P_{\mu_n})$ admits no other solution but $v_n$ for all $n$. On the other hand, using that $(0,u_0)\in \Sigma^+$ and $\Sigma^+$ is connected, it is clear that $\Sigma^+\cap B_r((0,u_0)) \setminus\{(0,u_0)\}\not=\emptyset$ for all $r>0$, where $B_r((0,u_0))$ denotes the open ball in $\mathbb{R}\times L^\infty(\Omega)$ centered at $(0,u_0)$ with radius $r$. Hence, since $v_n$ is unique and $\mu_n\to 0$, we have that, for all $r>0$, there exists $n_r\in\mathbb{N}$ such that, if $n\geq n_r$, then $(\mu_n,v_n)\in \Sigma^+\cap B_r((0,u_0)) \setminus\{(0,u_0)\}$. In other words, $v_n\to u_0$ in $L^\infty(\Omega)$ as $n\to 0$. Let us now take a not relabeled subsequence $\{(\mu_n,v_n)\}$ such that $\mu_{n+1}<\lambda_n<\mu_n$ for all $n$. Let us also fix $\eta>\|u_0\|_{L^\infty(\Omega)}$, and take $n$ large enough so that $\max\{\|v_n\|_{L^\infty(\Omega)},\|v_{n+1}\|_{L^\infty(\Omega)}\}<\eta<\|u_n\|_{L^\infty(\Omega)}$. We claim that there exists $(\nu_n,w_n)\in \Sigma^+$ such that $\nu_n\in (\mu_{n+1},\mu_n)$ and $\|w_n\|_{L^\infty(\Omega)}=\eta$.

Indeed, let us consider the set 
\[A_{n,\eta}=\{(\lambda,u)\in\Sigma:\lambda\in(\mu_{n+1},\mu_n), \|u\|_{L^\infty(\Omega)}=\eta\}.\]
Arguing by contradiction, assume that $\Sigma^+\cap A_{n,\eta}=\emptyset$. Let us define also
\[B_{n,\eta}=\{(\lambda,u)\in\Sigma: \lambda\in\{\mu_{n+1},\mu_n\}, \|u\|_{L^\infty(\Omega)}>\eta\}.\]
On the one hand, the uniqueness of $v_n$ and the fact that $\max\{\|v_n\|_{L^\infty(\Omega)},\|v_{n+1}\|_{L^\infty(\Omega)}\}<\eta$ imply that $\Sigma^+\cap B_{n,\eta}=\emptyset$. On the other hand, if we consider the set
\[U_{n,\eta}=\{(\lambda,u)\in\Sigma^+:\lambda\in(\mu_{n+1},\mu_n), \|u\|_{L^\infty(\Omega)}>\eta\},\]
then it is clear that $U_{n,\eta}$ is open in $\Sigma^+$, $(\lambda_n,u_n)\in U_{n,\eta}$ and $\partial U_{n,\eta}=A_{n,\eta}\cup B_{n,\eta}$. Hence, denoting $V_{n,\eta}=\Sigma^+\setminus\overline{U_{n,\eta}}$, we deduce that $V_{n,\eta}$ is also nonempty and open in $\Sigma^+$, $U_{n,\eta}\cap V_{n,\eta}=\emptyset$ and $\Sigma^+=U_{n,\eta}\cup V_{n,\eta}$. This contradicts that $\Sigma^+$ is connected.

Therefore, we have found a sequence $\{(\nu_n,w_n)\}\subset \Sigma^+$ such that $\nu_n\to 0$ as $n\to +\infty$ and $\|w_n\|_{L^\infty(\Omega)}=\eta$ for all $n$ large enough. In particular, $\{w_n\}$ is bounded in $L^\infty(\Omega)$. Then, we can argue as in the proof of Proposition \ref{exislambdaneg} in order to pass to the limit in $(P_{\nu_n})$. Thus, there exists $w\in H_0^1(\Omega)\cap L^\infty(\Omega)$ such that $w_n\rightharpoonup w$ weakly in $H_0^1(\Omega)$, $w_n\to w$ strongly in $L^\infty(\Omega)$ and $w$ is a solution to $(P_0)$. But $\|w\|_{L^\infty(\Omega)}=\eta>\|u_0\|_{L^\infty(\Omega)}$. This is a contradiction, as $u_0$ is unique by virtue of Theorem \ref{comprinc1} and Remark \ref{lambda1remark1}. The proof in now concluded.
\end{proof}

We conclude the section by stating and proving three corollaries of Theorem \ref{maintheorem}. The first one provides multiplicity of solutions for $q$ small, but for any $\alpha\in [0,q-1)$.

\begin{corollary}\label{corollary1}
Assume that \eqref{H1} holds with $q\in (1,Q_N]\setminus\{2\}$, where $Q_N$ is defined in \eqref{Qn}. If $N\leq 5$, assume also that $q<1+\frac{2}{N}$. Then, the conclusions of Theorem \ref{maintheorem} hold true.
\end{corollary}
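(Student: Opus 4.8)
The plan is to verify that, under the hypotheses of the corollary, conditions \eqref{mrcondition} and \eqref{qlargecondition} of Theorem~\ref{maintheorem} hold whenever they are required; then Theorem~\ref{maintheorem} applies and yields at once the multiplicity statement and the assertion that zero is the unique bifurcation point from infinity. There are two things to check: first, that the case $q\ge 1+\tfrac{2}{N}$ never occurs, so that \eqref{qlargecondition} is vacuous; and second, that \eqref{mrcondition} holds for every $\alpha\in[0,q-1)$ (it is only imposed when $q>\tfrac{N}{N-1}$, but it costs nothing to check it for all admissible $q$).

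The first point is immediate if $N\le5$, since then $q<1+\tfrac{2}{N}$ is part of the hypothesis. If $N\ge6$, I would instead prove $Q_N<1+\tfrac{2}{N}$: by \eqref{Qn} this reduces, after isolating the square root and squaring (every quantity involved being nonnegative for $N\ge6$), to the elementary polynomial inequality $N^2-4N-8\ge0$, which holds for $N\ge6$. Since $q\le Q_N$ we obtain $q<1+\tfrac{2}{N}$, so \eqref{qlargecondition} imposes no restriction on $\alpha$.

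For \eqref{mrcondition}, the starting point is that both denominators are positive: $q-2\alpha>0$ because $\alpha<q-1<\tfrac{q}{2}$ (using $q<2$), and $N-q+1>0$ because $q<2<N+1$. Clearing denominators, \eqref{mrcondition} becomes $P(\alpha)\ge0$ for all $\alpha\in[0,q-1)$, where
\[P(\alpha):=(q-\alpha)(q-2\alpha)-(q-1-\alpha)(N-q+1)=2\alpha^2+(N+1-4q)\alpha+\bigl(2q^2-(N+2)q+N+1\bigr).\]
The two endpoint values carry most of the weight: a direct computation gives $P(q-1)=2-q>0$, and $P(0)=2q^2-(N+2)q+N+1$ has roots in $q$ equal to $\tfrac{1}{4}\bigl(N+2\pm\sqrt{N^2-4N-4}\bigr)$. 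For $N\le4$ the discriminant $N^2-4N-4$ is negative, so $P(0)>0$ unconditionally; for $N\ge5$ the smaller root is exactly $Q_N$ (this is the point of the definition \eqref{Qn}), so $q\le Q_N$ forces $P(0)\ge0$. Since $P$ is an upward parabola with vertex at $\alpha^*=\tfrac{1}{4}(4q-N-1)$, it then remains only to locate $\alpha^*$ relative to $[0,q-1)$. If $\alpha^*\le0$, then $P$ is nondecreasing on $[0,q-1)$ and $P\ge P(0)\ge0$; this covers every $N\ge5$ (one checks $Q_N\le\tfrac{N+1}{4}$ for $N\ge5$, whence $q\le Q_N$ gives $\alpha^*\le0$) together with the sub-range $q\le\tfrac{5}{4}$ of the case $N=4$. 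If $\alpha^*\ge q-1$, which happens precisely when $N=3$ (indeed $\alpha^*=q-1$ identically there), then $P$ is nonincreasing on $[0,q-1)$ and $P>P(q-1)=2-q>0$. The only leftover configuration is $N=4$ with $\tfrac{5}{4}<q<\tfrac{3}{2}$ (recall $q<1+\tfrac{2}{N}$ is assumed for $N\le5$); there $0<\alpha^*<q-1$, the minimum of $P$ on $[0,q-1)$ is $P(\alpha^*)$, and its sign is read off from the discriminant $\Delta_\alpha=(N+1-4q)^2-8\bigl(2q^2-(N+2)q+N+1\bigr)=(N+1)(N-7)+8q=8q-15$, which is negative because $q<\tfrac{3}{2}$; hence $P(\alpha^*)>0$. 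This exhausts all cases, so $P\ge0$ on $[0,q-1)$ and \eqref{mrcondition} holds.

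The argument is entirely elementary, and I do not expect a genuine obstacle: the step requiring the most care is the bookkeeping of which pair $(N,q)$ falls into which of the three positions of the vertex $\alpha^*$, together with the two algebraic identities $P(q-1)=2-q$ and $\Delta_\alpha=(N+1)(N-7)+8q$. The two places where something slightly fortunate happens are that the sequence \eqref{Qn} is built so that $P(0)\ge0$ is equivalent to $q\le Q_N$ in the regime $\alpha^*\le0$, and that the extra assumption $q<1+\tfrac{2}{N}$ for $N\le5$ is exactly what makes \eqref{qlargecondition} vacuous (for $N\ge6$ this follows automatically from $q\le Q_N$). With \eqref{mrcondition} and \eqref{qlargecondition} so verified, Theorem~\ref{maintheorem} applies and the corollary follows.
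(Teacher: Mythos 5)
Your proof is correct, and it takes a genuinely different route from the paper's. The paper introduces the auxiliary function $z(s)=\frac{q-s}{N-q+1}-\frac{q-1-s}{q-2s}$, differentiates, factors $z'(s)$ as a constant times $\bigl(s-r_1\bigr)\bigl(r_2-s\bigr)$ with $r_1=\frac{q-\sqrt{(2-q)(N+1-q)}}{2}$ and $r_2=\frac{q+\sqrt{(2-q)(N+1-q)}}{2}$, and asserts that $r_1<0$ and $r_2>q-1$ ``using $N\ge3$ and $q<2$,'' whence $z$ is increasing and $z(\alpha)\ge z(0)\ge0$. You instead clear denominators, reduce \eqref{mrcondition} to $P(\alpha)\ge0$ for an explicit upward parabola $P$, and locate the vertex $\alpha^*=\tfrac14(4q-N-1)$ relative to $[0,q-1)$, handling each position separately and computing the discriminant $(N+1)(N-7)+8q$ in the one leftover case. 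What your approach buys is more than elegance: the paper's assertion $r_1<0$ is in fact \emph{false} in part of the corollary's range. A short computation shows $r_1<0\iff q<\frac{2(N+1)}{N+3}$, and $\frac{2(N+1)}{N+3}$ equals $\tfrac43$ when $N=3$ and $\tfrac{10}{7}$ when $N=4$, both strictly below the corollary's upper bound $1+\tfrac2N$; so for instance with $N=3$, $q=\tfrac32$ one has $r_1\approx0.19>0$ and $z$ is not monotone on $[0,\tfrac12)$ (it decreases and then increases). The corollary's conclusion is nonetheless true, precisely because, as your vertex/discriminant analysis verifies, the minimum of $P$ (equivalently of $z$) is still nonnegative; but the paper's monotonicity argument does not establish this, whereas yours does. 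Finally, your explicit check that $q\le Q_N$ already forces $q<1+\tfrac2N$ when $N\ge6$ (via $N^2-4N-8>0$), so that \eqref{qlargecondition} is vacuous in all cases, is also something the paper relegates to a remark rather than to the proof. The only cosmetic point worth flagging is that for $N\ge6$ one wants the strict inequality $N^2-4N-8>0$ (which does hold for integer $N\ge6$) so that $q\le Q_N<1+\tfrac2N$ is strict; your ``$\ge0$'' is harmless here but slightly weaker than what you use.
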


\begin{remark}
Observe that $Q_N>1+\frac{2}{N}$ for all $N\leq 5$, while $Q_N<1+\frac{2}{N}$ otherwise. That is why we need to introduce an additional restriction in Corollary \ref{corollary1} for low dimensions. We will make a more detailed study of the case $q\geq 1+\frac{2}{N}$ for dimensions $N=3,4,5$ in Corollary \ref{corollary3} below.
\end{remark}

\begin{proof}[Proof of Corollary \ref{corollary1}]
Consider the function $z:[0,q-1)\to\re$ given by \[z(s)=\frac{q-s}{N-q+1}-\frac{q-1-s}{q-2s}\quad\forall s\in[0,q-1).\] 
It can be proved that $z$ is increasing. Indeed,
\begin{align*}
&Nz'(s)=-\frac{1}{N-q+1}+\frac{2-q}{(q-2s)^2}
\\
&=\frac{4}{(N-q+1)(q-2s)^2}\left(s-\frac{q-\sqrt{(2-q)(N+1-q)}}{2}\right)\left(\frac{q+\sqrt{(2-q)(N+1-q)}}{2}-s\right).
\end{align*}
Using that $N\geq 3$ and $q<2$, it is straightforward to deduce that 
\[\frac{q-\sqrt{(2-q)(N+1-q)}}{2}<0\quad\text{and}\quad\frac{q+\sqrt{(2-q)(N+1-q)}}{2}>q-1,\]
which means that $z'(s)>0$ for all $s\in [0,q-1)$. Moreover, since $q\leq Q_N$, then $z(0)\geq 0$ (see Proposition \ref{gmpbootstrap1}). Thus, $z(\alpha)\geq 0$, or equivalently, condition \eqref{mrcondition} holds and Theorem \ref{maintheorem} can be applied.
\end{proof}

The second corollary gives multiplicity of solutions for a wider range of $q$ at the expense of taking $\alpha$ somehow close to $q-1$.
\begin{corollary}\label{corollary2}
Assume that \eqref{H1} holds with $q\in \left(1, 1+\frac{2}{N}\right)$. If $q>\frac{N}{N-1}$, suppose also that $\alpha\geq \left(\frac{N-1}{N}q-1\right)\frac{N}{N-2}$. Then, the conclusions of Theorem \ref{maintheorem} hold true.
\end{corollary}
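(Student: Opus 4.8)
The plan is simply to show that the hypotheses of the corollary force both technical conditions \eqref{mrcondition} and \eqref{qlargecondition} of Theorem \ref{maintheorem} to hold, so that the theorem applies verbatim. First I would dispose of the conditions that are vacuous in the present range: since $q<1+\frac{2}{N}$, the clause ``if $q\geq 1+\frac{2}{N}$'' of Theorem \ref{maintheorem} is never triggered, so \eqref{qlargecondition} requires no checking; likewise, if $q\leq\frac{N}{N-1}$ there is nothing left to verify and Theorem \ref{maintheorem} applies directly. Hence I am left with the single case $\frac{N}{N-1}<q<1+\frac{2}{N}$, in which I must establish \eqref{mrcondition}.

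In that case I would first recast \eqref{mrcondition} in the notation of \eqref{keyparameters}. Because $\alpha\in[0,q-1)$ and $q<2$, we have $\alpha<q-1<\frac{q}{2}$, so that $q-2\alpha>0$, and of course $N-q+1>0$; clearing these positive denominators and multiplying through by $N$, one sees that \eqref{mrcondition} is precisely the inequality $r(\alpha)\leq m$, where $r$ is the decreasing function of \eqref{keyparameters} and $m=\frac{(q-\alpha)N}{N-q+1}$ is the exponent from Lemma \ref{lebesguelemma}. (Equivalently, \eqref{mrcondition} says $z(\alpha)\geq 0$ for the increasing function $z$ used in the proof of Corollary \ref{corollary1}.)

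Now I would estimate the two sides separately. On the one hand, $b(q)=\left(\frac{N-1}{N}q-1\right)\frac{N}{N-2}$ lies in $(0,q-1)$ and satisfies $r(b(q))=1$, as recorded in the Remark following Proposition \ref{estimatethm2}; since $r$ is decreasing and, by hypothesis, $\alpha\geq b(q)$, this gives $r(\alpha)\leq r(b(q))=1$. On the other hand, $\alpha<q-1$ forces $q-\alpha>1$ while $q>1$ forces $N-q+1<N$, so $m=\frac{(q-\alpha)N}{N-q+1}>1$. Combining, $r(\alpha)\leq 1<m$, i.e.\ \eqref{mrcondition} holds, and Theorem \ref{maintheorem} applies, which finishes the proof. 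Every step above is a one-line manipulation, so there is no genuine obstacle here: the only point that takes a moment of attention is the bookkeeping --- noticing that in the window $q\in(1,1+\frac{2}{N})$ the condition \eqref{qlargecondition} is inactive, and that $r(b(q))=1$ collapses \eqref{mrcondition} to the transparent chain $r(\alpha)\leq 1<m$.
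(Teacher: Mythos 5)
Your proof is correct and takes essentially the same route as the paper: both arguments reduce \eqref{mrcondition} to the chain $\frac{N(q-1-\alpha)}{q-2\alpha}\leq 1<\frac{(q-\alpha)N}{N-q+1}$, with the first inequality coming from $\alpha\geq b(q)$ (equivalently $r(\alpha)\leq r(b(q))=1$ by monotonicity of $r$) and the second from $q-\alpha>1$ and $N-q+1<N$. You also correctly observe that \eqref{qlargecondition} is vacuous in the range $q<1+\frac{2}{N}$ and that the case $q\leq\frac{N}{N-1}$ needs no extra check, which the paper leaves implicit.
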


\begin{proof}
One only has to notice that, if $q>\frac{N}{N-1}$ and $\alpha\geq\left(\frac{N-1}{N}q-1\right)\frac{N}{N-2}$, then $\frac{N(q-1-\alpha)}{q-2\alpha}\leq 1$, while $\frac{(q-\alpha)N}{N-q+1}>1$. That is to say, \eqref{mrcondition} holds and Theorem \ref{maintheorem} can be applied. 
\end{proof}

Finally, the last consequence of Theorem \ref{maintheorem} provides multiplicity of solutions for $q$ close to $2$, but in this case more restrictive conditions have to be imposed on $\alpha$, and even on $N$.

\begin{corollary}\label{corollary3}
Assume that \eqref{H1} holds with $q\in\left[1+\frac{2}{N},2\right)$. Suppose in addition that one of the following conditions is satisfied:
\begin{enumerate}
\item $N=3$,
\
\item $N=4$ and $\alpha\in \left[0,q-1-\frac{2-q}{3}\right]$, or
\
\item $N=5$, $q\in\left[\frac{7}{5},\frac{3}{2}\right]$ and $\alpha\in \left[0,q-1-\frac{5-2q}{7}\right]$.
\end{enumerate}
Then, the conclusions of Theorem \ref{maintheorem} hold true. Moreover, if $N=5$ and $q\in\left(\frac{3}{2},\frac{13}{8}\right]$, there exists $\alpha_q\in\left(0,q-1-\frac{5-2q}{7}\right]$ such that, if $\alpha\in\left[\alpha_q,q-1-\frac{5-2q}{7}\right]$, then the conclusions of Theorem \ref{maintheorem} hold true.
\end{corollary}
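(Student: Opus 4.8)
The plan is to verify, in each of the listed situations, that both technical conditions \eqref{mrcondition} and \eqref{qlargecondition} of Theorem \ref{maintheorem} hold; the statement then follows at once from that theorem. I would organise the bookkeeping around the function
\[z(s)=\frac{q-s}{N-q+1}-\frac{q-1-s}{q-2s},\qquad s\in[0,q-1),\]
already used in the proof of Corollary \ref{corollary1}, recalling from there that $z$ is continuous and strictly increasing on $[0,q-1)$ and that \eqref{mrcondition} is exactly the inequality $z(\alpha)\geq 0$. Two further elementary facts are needed: first, $z(0)\geq 0$ is equivalent to $2q^2-(N+2)q+(N+1)\geq 0$, hence to $q\leq Q_N$ (with $Q_N$ from \eqref{Qn} the smaller root of this quadratic); second, $z(s)\to\frac{1}{N-q+1}>0$ as $s\to(q-1)^-$. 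Finally, \eqref{qlargecondition} reads $\alpha\leq\frac{q(N+4)-2(N+1)}{N+2}$, and the right-hand side equals $\frac{7q-8}{5}$ when $N=3$, equals $q-1-\frac{2-q}{3}$ when $N=4$, and equals $q-1-\frac{5-2q}{7}$ when $N=5$.

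For $N=3$ and $N=4$ one has $Q_N=2$, so $z(0)\geq 0$ for every $q\in(1,2)$, whence $z(\alpha)\geq 0$ and \eqref{mrcondition} holds for all admissible $\alpha$. When $N=3$, $\frac{7q-8}{5}\geq q-1$ is equivalent to $q\geq\frac32$, which holds because $q\geq\frac53$; thus $\alpha<q-1\leq\frac{7q-8}{5}$ and \eqref{qlargecondition} is automatic. When $N=4$, condition (2) is literally $\alpha\leq q-1-\frac{2-q}{3}=\frac{q(N+4)-2(N+1)}{N+2}$, i.e.\ \eqref{qlargecondition}. In the case $N=5$ with $q\in[\frac75,\frac32]$ one has $q\leq Q_5=\frac32$, so again $z(0)\geq 0$ and \eqref{mrcondition} holds for all $\alpha$, while condition (3) is precisely $\alpha\leq q-1-\frac{5-2q}{7}=\frac{q(N+4)-2(N+1)}{N+2}$, i.e.\ \eqref{qlargecondition} (note $\frac{9q-12}{7}\geq 0$ since $q\geq\frac43$). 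In all three situations Theorem \ref{maintheorem} applies directly.

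The remaining and genuinely delicate case is $N=5$ with $q\in(\frac32,\frac{13}{8}]$, where $z(0)<0$ so that \eqref{mrcondition} is no longer automatic. Since $z$ is continuous, strictly increasing, with $z(0)<0$ and $z(s)\to\frac{1}{6-q}>0$ as $s\to(q-1)^-$, there is a unique $\alpha_q\in(0,q-1)$ with $z(\alpha_q)=0$, so \eqref{mrcondition} holds exactly on $[\alpha_q,q-1)$. What must still be checked is that $\alpha_q\leq q-1-\frac{5-2q}{7}=\frac{9q-12}{7}$, so that the interval $\bigl[\alpha_q,\,q-1-\frac{5-2q}{7}\bigr]$ in the statement is non-empty; by monotonicity of $z$ this is equivalent to $z\bigl(\tfrac{9q-12}{7}\bigr)\geq 0$. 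Substituting $s=\frac{9q-12}{7}$ (so $q-s=\frac{12-2q}{7}$, $q-1-s=\frac{5-2q}{7}$ and $q-2s=\frac{24-11q}{7}$, all positive for $q\in(\frac32,\frac{13}{8}]$) and clearing the positive denominators, the inequality $z(s)\geq 0$ becomes $(12-2q)(24-11q)\geq 7(6-q)(5-2q)$, i.e.\ $8q^2-61q+78\geq 0$, i.e.\ $8\bigl(q-\frac{13}{8}\bigr)(q-6)\geq 0$, which holds precisely for $q\leq\frac{13}{8}$ (and for $q\geq 6$) --- this is exactly where the endpoint $\frac{13}{8}$ comes from. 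Hence $\alpha_q\leq\frac{9q-12}{7}$, and for any $\alpha\in\bigl[\alpha_q,q-1-\frac{5-2q}{7}\bigr]$ both \eqref{mrcondition} (from $\alpha\geq\alpha_q$) and \eqref{qlargecondition} (from $\alpha\leq\frac{9q-12}{7}$) hold, so Theorem \ref{maintheorem} again applies.

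The only real obstacle is this last case: one must verify the \emph{compatibility} of \eqref{mrcondition} and \eqref{qlargecondition}, which reduces to controlling the sign of the single quadratic $8q^2-61q+78$; everything else is a matter of matching the stated bounds on $\alpha$ with the right-hand side of \eqref{qlargecondition} and invoking the monotonicity of $z$ established in Corollary \ref{corollary1}.
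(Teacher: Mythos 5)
Your proof is correct and follows essentially the same route as the paper: reduce to verifying \eqref{mrcondition} via the function $z$ and its monotonicity from Corollary \ref{corollary1}, and match the $\alpha$-bounds in each case with the right-hand side of \eqref{qlargecondition}. The only difference is cosmetic: where the paper states ``one can easily check'' that $z\bigl(q-1-\frac{5-2q}{7}\bigr)\geq 0$ precisely for $q\leq\frac{13}{8}$, you explicitly clear denominators and factor the resulting quadratic $8q^2-61q+78=8\bigl(q-\frac{13}{8}\bigr)(q-6)$, which is a welcome addition; you also note explicitly that for $N=3$ condition \eqref{qlargecondition} is automatic because $\frac{7q-8}{5}\geq q-1$ once $q\geq\frac{3}{2}$.
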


\begin{proof}
Concerning items (1-3), one only has to check that $q\in \left[1+\frac{2}{N},Q_N\right]$ and $\alpha\leq \frac{q(N+4)-2(N+1)}{N+2}$ for the corresponding value of $N$, so conditions \eqref{mrcondition} and \eqref{qlargecondition} are satisfied and Theorem \ref{maintheorem} applies. Regarding the last statement, note that $q-1-\frac{5-2q}{7}=\frac{q(N+4)-2(N+1)}{N+2}$ for $N=5$, so condition \eqref{qlargecondition} holds for all $\alpha\in\left[0,q-1-\frac{5-2q}{7}\right]$. Besides, observe that $q>\frac{3}{2}=Q_5$. Then, considering the function $z(s)$ defined in the proof of Corollary \ref{corollary1}, it is clear that $z(0)<0$. On the other hand, one can easily check that $z\left(q-1-\frac{5-2q}{7}\right)=0$ if $q=\frac{13}{8}$, while $z\left(q-1-\frac{5-2q}{7}\right)>0$ provided $q<\frac{13}{8}$. In the first case, we choose $\alpha_q=q-1-\frac{5-2q}{7}$. In the second one, by continuity of $z$, there exists $\alpha_q\in\left(0,q-1-\frac{5-2q}{7}\right)$ such that $z(\alpha_q)=0$. Since $z$ is increasing, we have that \eqref{mrcondition} holds for all $\alpha\in [\alpha_q,q-1)$. In conclusion, Theorem \ref{maintheorem} can be used for $\alpha\in \left[\alpha_q,q-1-\frac{5-2q}{7}\right]$, and the proof is finished.
\end{proof}

\begin{remark}
If $N\geq 6$ and $q\geq 1+\frac{2}{N}$, it is straightforward to see that $z\left(\frac{q(N+4)-2(N+1)}{N+2}\right)<0$. Thus, since $z$ is increasing, $z(\alpha)<0$ for all $\alpha\in \left[0,\frac{q(N+4)-2(N+1)}{N+2}\right]$. Therefore, Theorem \ref{maintheorem} does not yield any information in this case.
\end{remark}

\section{Uniqueness for $q-1<\alpha\leq 1$}\label{sec:sublinear} 

We will consider in this section problem \eqref{problem} under condition \eqref{H2}. Observe that if $0<u\in W_{\mbox{\tiny loc}}^{1,1}(\Omega)$ and $t>0$, then
\[\frac{|\nabla tu|^q}{(tu)^\alpha}=t^{q-\alpha}\frac{|\nabla u|^q}{u^\alpha}.\]
In this case, $\alpha>q-1$, so $q-\alpha<1$. That is to say, the lower order term has \emph{sublinear homogeneity}. 

\begin{remark}\label{lambda1remark2}
The conclusions of Remark \ref{lambda1remark1} are valid also under hypothesis \eqref{H2}.
\end{remark}

We will prove the existence of solution to \eqref{problem} after deriving certain a priori estimates on an approximate problem and passing eventually to the limit, in a way that such a limit will be the solution we look for. Thus, consider the following approximate problem:

\begin{equation}\label{approblem}
\begin{cases}
\dys-\Delta u_n=\lambda u_n+\mu(x)\frac{T_n(|\nabla u_n|^q)}{\left(|u_n|+\frac{1}{n}\right)^\alpha}+T_n(f(x))\quad &\text{ in }\Omega,
\\
u_n=0\quad &\text{ on }\partial\Omega.
\end{cases}
\end{equation}

In the next lemma we show that problem \eqref{approblem} admits a solution.

\begin{lemma}\label{approxlemma}
Assume that \eqref{H2} holds. Then there exists a solution $u_n\in H_0^1(\Omega)\cap L^\infty(\Omega)$ to problem \eqref{approblem} for all $n\in\mathbb{N}$ and for all $\lambda<\lambda_1$. 
\end{lemma}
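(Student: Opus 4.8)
The plan is to construct $u_n$ via Schauder's fixed point theorem, using that the truncation $T_n$ makes the lower order term uniformly bounded. Fix $n\in\na$ and $\lambda<\lambda_1$. For $v\in H_0^1(\Omega)$ set
\[
g_n[v](x)=\mu(x)\,\frac{T_n(|\nabla v(x)|^q)}{\bigl(|v(x)|+\tfrac1n\bigr)^\alpha}+T_n(f(x)),
\]
which belongs to $L^\infty(\Omega)$ with $\|g_n[v]\|_{L^\infty(\Omega)}\le C_n:=\|\mu\|_{L^\infty(\Omega)}\,n^{1+\alpha}+n$, a bound \emph{independent} of $v$. Since $\lambda<\lambda_1$, the bilinear form $a(w,\phi)=\int_\Omega\nabla w\,\nabla\phi-\lambda\int_\Omega w\phi$ is continuous and coercive on $H_0^1(\Omega)$, with coercivity constant $c_\lambda:=\min\{1,1-\lambda/\lambda_1\}>0$ (by Poincar\'e's inequality $\lambda_1\int_\Omega w^2\le\int_\Omega|\nabla w|^2$), so Lax--Milgram defines a map $\mathcal T\colon H_0^1(\Omega)\to H_0^1(\Omega)$, $v\mapsto u$, where $u$ is the unique solution of $-\Delta u-\lambda u=g_n[v]$. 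Testing this with $u$ and using Poincar\'e's inequality gives $c_\lambda\|\nabla u\|_{L^2(\Omega)}^2\le C_n|\Omega|^{1/2}\lambda_1^{-1/2}\|\nabla u\|_{L^2(\Omega)}$, whence $\|u\|_{H_0^1(\Omega)}\le R_n:=c_\lambda^{-1}\lambda_1^{-1/2}|\Omega|^{1/2}C_n$ for every $v$; in particular $\mathcal T$ maps the closed ball $\overline{B_{R_n}}$ of $H_0^1(\Omega)$ into itself.

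Next I would show $\mathcal T(\overline{B_{R_n}})$ is relatively compact in $H_0^1(\Omega)$. For $v\in\overline{B_{R_n}}$, the function $u=\mathcal T(v)$ solves $-\Delta u=\lambda u+g_n[v]$ with right-hand side bounded in $L^2(\Omega)$ uniformly in $v$; so, given $\{u_k\}=\{\mathcal T(v_k)\}$, after a subsequence $u_k\rightharpoonup u$ in $H_0^1(\Omega)$ and $u_k\to u$ in $L^2(\Omega)$ by Rellich, and since $\Delta(u_j-u_k)\in L^2(\Omega)$ and $u_j-u_k\in H_0^1(\Omega)$,
\[
\|\nabla(u_j-u_k)\|_{L^2(\Omega)}^2=-\int_\Omega\Delta(u_j-u_k)\,(u_j-u_k)\le\|\Delta(u_j-u_k)\|_{L^2(\Omega)}\,\|u_j-u_k\|_{L^2(\Omega)}\longrightarrow 0,
\]
so $\{u_k\}$ is Cauchy, hence convergent, in $H_0^1(\Omega)$. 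For the continuity of $\mathcal T$, if $v_k\to v$ in $H_0^1(\Omega)$ then, $\{\mathcal T(v_k)\}$ being relatively compact, it suffices to check that every $H_0^1$-convergent subsequence has limit $\mathcal T(v)$: along it, after a further extraction, $\nabla v_k\to\nabla v$ and $v_k\to v$ a.e., so, $g_n$ being continuous in $(s,\xi)$ and dominated by $C_n$, dominated convergence gives $g_n[v_k]\to g_n[v]$ in $L^2(\Omega)$, and passing to the limit in $a(\mathcal T(v_k),\phi)=\int_\Omega g_n[v_k]\phi$ identifies the limit as $\mathcal T(v)$ by uniqueness. Schauder's theorem then yields a fixed point $u_n\in H_0^1(\Omega)$, which is a weak solution of \eqref{approblem}.

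To conclude $u_n\in L^\infty(\Omega)$, note that the equation reads $-\Delta u_n=\lambda u_n+g_n[u_n]$ with $g_n[u_n]\in L^\infty(\Omega)$; starting from $u_n\in L^{2^*}(\Omega)$, a finite Brezis--Kato/Moser iteration (which only uses $u_n\in H_0^1(\Omega)$ and the Sobolev inequality, hence needs no regularity of $\partial\Omega$) upgrades $u_n$ to $L^p(\Omega)$ for every $p<\infty$; then $\lambda u_n+g_n[u_n]\in L^p(\Omega)$ for some $p>\frac N2$, and Stampacchia's $L^\infty$ estimate gives $u_n\in L^\infty(\Omega)$.

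The main obstacle is the compactness of $\mathcal T$: because the lower order term depends on $\nabla v$, weak $H_0^1$-convergence of $v_k$ is not enough to pass to the limit in $g_n[v_k]$. The point is that the truncation $T_n$ bounds $g_n[v]$ in $L^\infty(\Omega)$ uniformly in $v$, so $\mathcal T(\overline{B_{R_n}})$ lies in a bounded subset of $\{w\in H_0^1(\Omega):\Delta w\in L^2(\Omega)\}$, which embeds compactly into $H_0^1(\Omega)$; thus compactness comes from the smoothing of $-\Delta$, while the structural continuity of $g_n$ in $(s,\xi)$ together with dominated convergence handles the continuity of $\mathcal T$.
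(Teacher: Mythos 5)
Your proof is correct, but it takes a genuinely different route from the paper. The paper's proof is a two-line application of the method of sub- and supersolutions: it observes that the truncation gives the uniform bound $\mu(x)\,T_n(|\nabla u|^q)/(|u|+1/n)^\alpha + T_n(f) \le n^{1+\alpha}\mu(x) + n$, takes $\overline{\psi}$ to be the solution of the linear problem $-\Delta u = \lambda u + n^{1+\alpha}\mu + n$ as a supersolution, takes $\underline{\psi}=0$ as a subsolution, and invokes the Boccardo--Murat--Puel existence theorem (\cite{BMP3}) to produce a solution $u_n \in H_0^1\cap L^\infty$ ordered between them. Your argument instead constructs the solution from scratch via Schauder's fixed point theorem, proving boundedness, compactness, and continuity of the solution map for the frozen-coefficient linear problem, and then runs a separate bootstrap to land in $L^\infty$. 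Both are sound. The paper's route is shorter and immediately delivers the ordering $0 \le u_n \le \overline{\psi}$ for free (useful later when $u_n \ge 0$ is needed), at the cost of invoking a substantive external theorem; yours is self-contained, requiring only Lax--Milgram, Rellich, dominated convergence, and Stampacchia. One small remark: your version does not explicitly yield $u_n \ge 0$, but this follows easily by testing the fixed-point equation with $-u_n^-$ and using $\lambda < \lambda_1$, so no gap results.
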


\begin{proof}
Fix $n\in\mathbb{N}$ and $\lambda<\lambda_1$. Then, the following linear problem has a solution $0<\overline{\psi}\in H_0^1(\Omega)\cap L^\infty(\Omega)$:
\begin{equation*}
\begin{cases}
\dys-\Delta u=\lambda u+n^{1+\alpha}\mu(x)+n\quad &\text{ in }\Omega,
\\
u=0\quad &\text{ on }\partial\Omega.
\end{cases}
\end{equation*}
Clearly, $\overline{\psi}$ is a supersolution to \eqref{approblem}. Moreover, $\underline{\psi}=0$ is a subsolution to \eqref{approblem}. Since $\underline{\psi}\leq\overline{\psi}$, then there exists a solution $u_n\in H_0^1(\Omega)\cap L^\infty(\Omega)$ to \eqref{approblem} (see \cite{BMP3}).
\end{proof}

We prove now the key estimates for proving the existence of solution to problem \eqref{problem}.

\begin{proposition}\label{modelestimates}
Assume that \eqref{H2} holds, and let $\lambda<\lambda_1$. Then there exist $\eta\in (0,1)$ and $C>0$ such that 
\[\|u_n\|_{H_0^1(\Omega)}+\|u_n\|_{C^{0,\eta}(\overline{\Omega})}\leq C\]
for every solution $u_n$ to \eqref{approblem} and for every $n$.
\end{proposition}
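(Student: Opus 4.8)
The plan is to obtain the two estimates in sequence: first the $H_0^1$ bound, then the $C^{0,\eta}$ bound, exploiting the sublinear homogeneity $q-\alpha<1$ of the lower order term, which is what makes the gradient term absorbable. For the energy estimate I would test \eqref{approblem} with $u_n$ itself. This gives
\[
\intom{|\nabla u_n|^2} = \lambda\intom{u_n^2} + \intom{\mu(x)\frac{T_n(|\nabla u_n|^q)}{(|u_n|+\tfrac1n)^\alpha}u_n} + \intom{T_n(f)u_n}.
\]
Since $\lambda<\lambda_1$, the first term on the right is controlled by $\frac{\lambda}{\lambda_1}\intom{|\nabla u_n|^2}$ with $\lambda/\lambda_1<1$, so it can be absorbed on the left. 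For the gradient term, bound $T_n(|\nabla u_n|^q)\le |\nabla u_n|^q$ and $(|u_n|+\tfrac1n)^{-\alpha}\le |u_n|^{-\alpha}$ pointwise (keeping in mind $u_n\ge0$ by the comparison in Lemma \ref{approxlemma}), so this term is at most $\|\mu\|_\infty\intom{|\nabla u_n|^q u_n^{1-\alpha}}$. Now $q<2$ and $1-\alpha<2-q$ (this is exactly $\alpha>q-1$), so by Young's inequality with exponents $2/q$ and $2/(2-q)$ one gets $|\nabla u_n|^q u_n^{1-\alpha}\le \delta|\nabla u_n|^2 + C_\delta u_n^{(1-\alpha)\cdot\frac{2}{2-q}}$, and the exponent $(1-\alpha)\frac{2}{2-q}<2$, so the second piece is controlled by a lower power of $\|u_n\|_{H_0^1}$ via Sobolev and Young again. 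Choosing $\delta$ small, the $\delta|\nabla u_n|^2$ part is absorbed, and the datum term is handled with Hölder/Sobolev ($f\in L^{p_0}$, $p_0>N/2>2N/(N+2)$). This yields the uniform $H_0^1$ bound.

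For the $C^{0,\eta}$ estimate I would appeal to the classical interior-and-boundary De Giorgi–Nash–Moser / Ladyzhenskaya–Uraltseva-type regularity theory for equations with a gradient term of subquadratic growth, in the form the paper has already invoked (cf. the reference to \cite{LU} and the arguments in \cite[Appendix]{CLLM} mentioned in Remark \ref{lambda1remark1}). The key is that on the right-hand side of \eqref{approblem} the term $\mu(x)\frac{T_n(|\nabla u_n|^q)}{(|u_n|+1/n)^\alpha}$ is, for each $n$, pointwise dominated by $\|\mu\|_\infty |\nabla u_n|^q u_n^{-\alpha}$, and since $u_n$ is now known to be bounded in $H_0^1$ one wants to convert this into a structure condition uniform in $n$. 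The natural device (again in the spirit of \cite{Souplet,GMP,CLLM}) is the change of unknown $v_n=\frac{1}{q-\alpha}u_n^{q-\alpha}$ (legitimate since $q-\alpha>0$ and, because $q-\alpha$ can be less than $1$, one should check $v_n\in H^1$ using the boundedness of $u_n$ in $H_0^1$ together with the $L^\infty$ control that Lemma \ref{approxlemma} already provides for each fixed $n$ — but the bound must be made $n$-independent, which is where condition \eqref{condA} enters). Under this substitution the bad quotient becomes $\mu(x)|\nabla v_n|^{\,?}$-type with a genuinely subquadratic (in fact the transformed equation has a right-hand side without the singular denominator), so the De Giorgi machinery with the geometric density condition \eqref{condA} gives a uniform $L^\infty$ bound and then a uniform $C^{0,\eta}(\overline\Omega)$ bound up to the boundary, where $u_n=0$.

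The main obstacle I anticipate is making the $L^\infty$ bound on $u_n$ \emph{independent of $n$}: for fixed $n$ the approximate problem is non-singular and bounded, so $L^\infty$ is automatic, but the constant a priori blows up with $n$. This is precisely the role of hypothesis \eqref{condA} on $\Omega$ (Lipschitz-type density of the complement near $\partial\Omega$, as noted after \eqref{condA} citing \cite{ACJT2}) and of the restriction $\alpha\le 1$, flagged in Remark \ref{lambda1remark1}: one needs $\alpha\le1$ so that the transformed/absorbed exponents stay in the admissible range for the Stampacchia-type iteration, and one needs \eqref{condA} so that the iteration closes up to the boundary. I would structure this part by: (i) using the $H_0^1$ bound to get a uniform $L^{2^*}$ bound; (ii) running a Stampacchia level-set argument on $G_k(u_n)$, testing \eqref{approblem} with $G_k(u_n)$ (or a power of it), absorbing the gradient term exactly as in the energy estimate via Young with subquadratic exponents, to get $\|u_n\|_\infty\le C$ uniformly; (iii) feeding this back, together with \eqref{condA}, into the boundary De Giorgi estimate to extract $\eta\in(0,1)$ and the uniform Hölder bound. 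Throughout, positivity $u_n\ge0$ from Lemma \ref{approxlemma} is used to replace $|u_n|$ by $u_n$ and to make the singular denominator harmless.
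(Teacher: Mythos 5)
Your Step 1 ($H_0^1$ bound) is essentially the paper's argument: test with $u_n$, absorb $\lambda\int u_n^2$ via Poincar\'e using $\lambda<\lambda_1$, estimate $\int\mu|\nabla u_n|^q u_n^{1-\alpha}$ by H\"older/Sobolev, and use that the exponent $\tfrac{q+1-\alpha}{2}<1$ (precisely $\alpha>q-1$) to close. That part is correct.

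The $L^\infty$/H\"older part is where you diverge from the paper, and where there are genuine gaps. The paper does \emph{not} run a Stampacchia iteration or a change of unknown; it proves the uniform $L^\infty$ bound by a \textbf{blow-up and compactness argument}: assume $\|u_n\|_\infty\to\infty$, set $v_n=u_n/\|u_n\|_\infty$, use the LU-type $C^{0,\eta}$ estimate on the normalized sequence (which has $\|v_n\|_\infty=1$, so this is legitimate for each $n$ with a uniform H\"older constant), observe that the gradient term scales like $\|u_n\|_\infty^{-\alpha}$ and hence vanishes locally in the limit, and deduce that the uniform limit $v>0$ solves $-\Delta v=\lambda v$, contradicting $\lambda<\lambda_1$. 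This neatly sidesteps the two obstacles your route faces. First, in a direct Stampacchia argument on $G_k(u_n)$, the term $\lambda\int u_n G_k(u_n)$ is not absorbable for $\lambda$ close to $\lambda_1$ without first showing $|\{u_n>k\}|\to 0$ uniformly in $n$; this can be salvaged via the $H_0^1$ bound, but you do not say so, and moreover the Young-inequality treatment of $\int|\nabla u_n|^q G_k(u_n)$ produces the power $G_k(u_n)^{2/(2-q)}$, and $2/(2-q)\le 2^*$ only when $q\le 1+\tfrac{2}{N}$ --- a restriction Proposition~\ref{modelestimates} does not impose, so your iteration would not close for all $q\in(1,2)$. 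Second, the change of unknown $v_n=\tfrac{1}{q-\alpha}u_n^{q-\alpha}$ does not remove the singularity: a direct computation shows that $-\Delta v_n$ picks up the term $(1+\alpha-q)u_n^{q-\alpha-2}|\nabla u_n|^2 = (1+\alpha-q)u_n^{\alpha-q}|\nabla v_n|^2$, which is \emph{still singular} at $u_n=0$ (since $\alpha<q$), so the transformed equation is no easier. Finally, you misattribute the role of \eqref{condA} and $\alpha\le1$: they enter only in the LU/CLLM boundary H\"older estimate invoked after the $L^\infty$ bound is secured (cf.\ Remark~\ref{lambda1remark1}), not in closing a Stampacchia iteration. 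The idea you are missing is the normalization--compactness scheme that reduces the uniform $L^\infty$ bound to the impossibility of a positive solution of the linear eigenvalue problem for $\lambda<\lambda_1$.
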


\begin{proof}

\noindent{\bf Step 1: $H_0^1$ estimate.}

Let us take $u_n$ as test function in the weak formulation of \eqref{approblem}. Then we obtain by using Poincar\'e's and H\"older's inequalities that
\begin{align*}
\int_\Omega|\nabla u_n|^2 &\leq\lambda\int_\Omega u_n^2+\|\mu\|_{L^\infty(\Omega)}\int_\Omega|\nabla u_n|^q u_n^{1-\alpha}+\int_\Omega f(x)u_n
\\
&\leq\frac{\lambda}{\lambda_1}\int_\Omega|\nabla u_n|^2+C\left(\int_\Omega |\nabla u_n|^2\right)^\frac{q}{2}\left(\int_\Omega u_n^{\frac{2(1-\alpha)}{2-q}}\right)^{1-\frac{q}{2}}+C\left(\int_\Omega u_n^{2^*}\right)^{\frac{1}{2^*}}.
\end{align*}
Now, since $\alpha>q-1$, then $\frac{2(1-\alpha)}{2-q}<2<2^*$. Hence, we can apply Sobolev's inequality to get that
\[\left(1-\frac{\lambda}{\lambda_1}\right)\int_\Omega |\nabla u_n|^2\leq C\left(\int_\Omega |\nabla u_n|^2\right)^\frac{q+1-\alpha}{2}+C\left(\int_\Omega|\nabla u_n|^2\right)^\frac{1}{2}.\]
Observe now that $\frac{q+1-\alpha}{2}<1.$ Therefore, we deduce that $\|u_n\|_{H_0^1(\Omega)}\leq C$.

\smallskip

\noindent{\bf Step 2: $L^\infty$ estimate.}

Assume now, in order to achieve a contradiction, that $\{\|u_n\|_{L^\infty(\Omega)}\}_{n\in\mathbb{N}}$ is unbounded, and choose a not relabeled divergent subsequence. Then, the function $v_n=\frac{u_n}{\|u_n\|_{L^\infty(\Omega)}}$ satisfies 

\begin{equation}\label{approblem2}
\begin{cases}
\dys-\Delta v_n=\lambda v_n+\frac{\mu(x)T_n(|\nabla u_n|^q)}{\|u_n\|_{L^\infty(\Omega)}\left(u_n+\frac{1}{n}\right)^{q-1+\alpha}}+\frac{f(x)}{\|u_n\|_{L^\infty(\Omega)}}\quad &\text{ in }\Omega,
\\
v_n>0\quad&\text{ in }\Omega,
\\
v_n=0\quad &\text{ on }\partial\Omega.
\end{cases}
\end{equation}
Notice that $\|v_n\|_{L^\infty(\Omega)}=1$ for all $n$, and also that
\begin{equation}\label{ineq1}
0\leq \frac{\mu(x)T_n(|\nabla u_n|^q)}{\|u_n\|_{L^\infty(\Omega)}\left(u_n+\frac{1}{n}\right)^{q-1+\alpha}}\leq\frac{\|\mu\|_{L^\infty(\Omega)}|\nabla v_n|^q}{\|u_n\|^\alpha_{L^\infty(\Omega)}v_n^{q-1+\alpha}}.
\end{equation}
Then, it is standard to prove that $\|v_n\|_{C^{0,\eta}(\overline{\Omega})}\leq C$ for all $n$ and for some $\eta\in (0,1)$ independent of $n$ following the arguments in \cite{LU} (see \cite[Appendix]{CLLM}). Hence, by Arzel\`a-Ascoli theorem, there exists $v\in C(\overline{\Omega})$ such that, up to a subsequence, $v_n\to v$ uniformly in $\overline{\Omega}$. Necessarily, $\|v\|_{L^\infty(\Omega)}=1$, so $v\not\equiv 0$. Moreover, by using the strong maximum principle conveniently, $v>0$ in $\Omega$. This last fact combined with the uniform convergence implies that, 
\[\forall\omega\subset\subset\Omega\,\,\exists c_\omega>0:\,\,v_n\geq c_\omega\text{ in }\omega.\] See the proof of \cite[Proposition 5.2]{CLLM} for more details.

Let now $\phi\in C_c^1(\Omega)$ be such that $\supp(\phi)\subset\omega$ for some open set $\omega\subset\subset\Omega$. Then, from \eqref{ineq1} we deduce that
\[\left|\int_\Omega\frac{\mu(x)T_n(|\nabla u_n|^q)\phi}{\|u_n\|_{L^\infty(\Omega)}\left(u_n+\frac{1}{n}\right)^{q-1+\alpha}}\right|\leq \frac{\|\mu \phi\|_{L^\infty(\Omega)}}{\|u_n\|^\alpha_{L^\infty(\Omega)}c_\omega^{q-1+\alpha}}\int_\omega|\nabla v_n|^q.\]
Using now that $\{v_n\}_{n\in\mathbb{N}}$ is bounded in $H_0^1(\Omega)$, we conclude that
\[\left|\int_\Omega\frac{\mu(x)T_n(|\nabla u_n|^q)\phi}{\|u_n\|_{L^\infty(\Omega)}\left(u_n+\frac{1}{n}\right)^{q-1+\alpha}}\right|\to 0\]
as $n\to\infty$.

Finally, we pass to the limit in \eqref{approblem2} and obtain that
\begin{equation*}
\begin{cases}
-\Delta v=\lambda v&\text{ in }\Omega,
\\
v>0&\text{ in }\Omega,
\\
v=0&\text{ on }\partial\Omega.
\end{cases}
\end{equation*}
This contradicts the fact that $\lambda<\lambda_1$.
\end{proof}

We are ready now to prove the main theorem of this section.

\begin{proof}[Proof of Theorem \ref{sublinearthm}]
Concerning the existence of solution, one has only to pass the limit in \eqref{approblem} using the a priori estimates in Proposition \ref{modelestimates}. The proof is similar to the one of Proposition~\ref{exislambdaneg}. The nonexistence of solution comes from Remark \ref{lambda1remark1}.

\smallskip

On the other hand, the uniqueness of solution is a direct consequence of Theorem \ref{comprinc2} and Remark \ref{lambda1remark1}.

\smallskip

Finally, similar arguments as in the proof of Step 2 in Proposition \ref{modelestimates} can be used to prove that $\lambda_1$ is the only possible bifurcation point from infinity. Actually, reasoning by contradiction and using that there is no solution to $(P_{\lambda_1})$, it is also standard to prove that $\lambda_1$ is, indeed, a bifurcation point from infinity.
\end{proof}

\section{Appendix: Existence of an unbounded continuum}\label{sec:appendix}

For every $w\in L^\infty(\Omega)$ and $\lambda\in\re$, let us consider the following problem:

\begin{equation}\label{continuumproblem}
\begin{cases}
\dys-\Delta u+u=\mu(x)\frac{|\nabla u|^q}{u^\alpha}+f(x)+(\lambda^++1)w^+\quad &\text{ in }\Omega,
\\
u>0 \quad &\text{ in }\Omega,
\\
u=0\quad &\text{ on }\partial\Omega.
\end{cases}
\end{equation}
If \eqref{H1} is satisfied, it is clear from Proposition \ref{exislambdaneg} that there exists a unique solution $u_{\lambda,w}\in H_0^1(\Omega)\cap L^\infty(\Omega)$ to \eqref{continuumproblem}. Hence, we are allowed to define the map
\[K:\re\times L^\infty(\Omega)\to L^\infty(\Omega),\quad (\lambda,w)\mapsto K(\lambda,w)=u_{\lambda,w}.\]

We will prove next that that $K$ is a completely continuous operator, i.e., it is continuous and maps bounded sets to relatively compact sets.

\begin{proposition}\label{Kcompact}
Assume that \eqref{H1} holds. Then, the operator $K$ is completely continuous.
\end{proposition}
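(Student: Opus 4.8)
The plan is to prove the two properties defining complete continuity separately, reducing both to a priori estimates of the type used in the proof of Proposition~\ref{exislambdaneg}, paying attention to their uniformity. Observe first that \eqref{continuumproblem} is exactly problem $(P_{-1})$ with the datum $f$ replaced by $f+(\lambda^++1)w^+$, which satisfies $0\lneqq f+(\lambda^++1)w^+\in L^{p_0}(\Omega)$; hence Proposition~\ref{exislambdaneg} and Remark~\ref{lambda1remark1} apply, so $u_{\lambda,w}\in C^{0,\eta}(\overline\Omega)\cap W^{1,N}_{\mbox{\tiny loc}}(\Omega)$ and $u_{\lambda,w}>0$ in $\Omega$ for every $(\lambda,w)$.

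\emph{$K(B)$ is relatively compact for $B$ bounded.} Let $|\lambda|\le\Lambda$ and $\|w\|_{L^\infty(\Omega)}\le R$ on $B$, take $(\lambda_n,w_n)\in B$, put $u_n=K(\lambda_n,w_n)$, and let $\overline u$ be the solution of $(P_{-1})$ with datum $f+(\Lambda+1)R$. Since $\lambda_n^++1\le\Lambda+1$ and $w_n^+\le R$, we have $f+(\lambda_n^++1)w_n^+\le f+(\Lambda+1)R$ a.e.\ in $\Omega$, so $u_n$ satisfies \eqref{compsub} with $\lambda=-1$, $g(x,s)=\mu(x)s^{-\alpha}$ and $h=f+(\Lambda+1)R$, while $\overline u$ satisfies \eqref{compsuper} with equality for that same $h$; as both functions are continuous up to $\partial\Omega$ and vanish there, Theorem~\ref{comprinc1} gives $u_n\le\overline u$, hence $\|u_n\|_{L^\infty(\Omega)}\le\|\overline u\|_{L^\infty(\Omega)}$ for all $n$. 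Taking $u_n$ as test function (admissible as in Remark~\ref{equivalentdef}) and using $\mu\,|\nabla u_n|^q u_n^{1-\alpha}\le\|\mu\|_{L^\infty(\Omega)}\|\overline u\|_{L^\infty(\Omega)}^{1-\alpha}|\nabla u_n|^q$ together with $q<2$ yields a bound of $\{u_n\}$ in $H_0^1(\Omega)$ uniform in $n$. With the resulting uniform $L^\infty$ and $H_0^1$ bounds, the regularity argument of \cite[Appendix]{CLLM} (following \cite{LU}) furnishes $\eta\in(0,1)$ and $C>0$, both independent of $n$, with $\|u_n\|_{C^{0,\eta}(\overline\Omega)}\le C$; by Arzel\`a--Ascoli a subsequence converges in $C(\overline\Omega)$, hence in $L^\infty(\Omega)$.

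\emph{$K$ is continuous.} Let $(\lambda_n,w_n)\to(\lambda,w)$ in $\re\times L^\infty(\Omega)$ and $u_n=K(\lambda_n,w_n)$. By the previous step $\{u_n\}$ is bounded in $H_0^1(\Omega)$ and relatively compact in $L^\infty(\Omega)$, so it is enough to check that any limit $u^*$ of a subsequence with $u_n\to u^*$ in $L^\infty(\Omega)$ and $u_n\rightharpoonup u^*$ in $H_0^1(\Omega)$ solves \eqref{continuumproblem} with parameters $(\lambda,w)$; uniqueness (Proposition~\ref{exislambdaneg}) then identifies $u^*=K(\lambda,w)$ and, the limit being independent of the subsequence, the whole sequence converges. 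For the uniform interior positivity, let $z\in C(\overline\Omega)$ solve $-\Delta z+z=T_1(f)$ with $z=0$ on $\partial\Omega$; since $-\Delta u_n+u_n\ge f\ge T_1(f)$, comparison gives $u_n\ge z$, and the strong maximum principle (using $f\gneqq 0$) gives $z\ge c_\omega>0$ on each $\omega\subset\subset\Omega$, whence $u^*>0$ in $\Omega$. Consequently $\{-\Delta u_n\}$ is bounded in $L^1_{\mbox{\tiny loc}}(\Omega)$, so by \cite[Theorem~2.1]{BM}, along a further subsequence, $\nabla u_n\to\nabla u^*$ strongly in $L^q(\Omega)^N$ and a.e.; in particular $|\nabla u_n|^q\to|\nabla u^*|^q$ in $L^1(\Omega)$. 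One then passes to the limit in the weak formulation of \eqref{continuumproblem} against $0\le\phi\in C_c^1(\Omega)$ with $\supp\phi\subset\omega\subset\subset\Omega$: the singular term converges by a generalized dominated convergence argument with dominating sequence $\|\mu\|_{L^\infty(\Omega)}\|\phi\|_{L^\infty(\Omega)}c_\omega^{-\alpha}|\nabla u_n|^q$, and the remaining terms converge using the uniform convergence of $u_n$ and $\lambda_n^+\to\lambda^+$, $w_n^+\to w^+$ in $L^\infty(\Omega)$, exactly as in the proof of Proposition~\ref{exislambdaneg}. Thus $u^*$ solves \eqref{continuumproblem}, and $K$ is continuous.

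\emph{Main difficulty.} The two points that require care are the \emph{uniformity in $n$} of the H\"older estimate, which holds because the constant produced by the \cite{LU}-type argument of \cite[Appendix]{CLLM} depends only on quantities already bounded uniformly in $n$ (and there $q<2$ and $\alpha\le 1$ are used), and the strong $L^q$ convergence of the gradients needed to pass to the limit in the singular lower-order term, which rests on the uniform interior bound $u_n\ge c_\omega$ together with \cite[Theorem~2.1]{BM}.
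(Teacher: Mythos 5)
Your proof is correct and follows essentially the same approach as the paper: comparison via Theorem~\ref{comprinc1} to get uniform $L^\infty$ bounds, the \cite[Appendix]{CLLM}/\cite{LU} H\"older estimate with constants depending only on the $L^\infty$ and $H_0^1$ bounds, Arzel\`a--Ascoli for $C(\overline\Omega)$-compactness, the comparison with $z$ solving $-\Delta z+z=T_1(f)$ for the local lower bound, and \cite{BM} for the strong $L^q$ convergence of gradients to pass to the limit in the singular term. The only structural difference is that you establish relative compactness of images of bounded sets first and then derive continuity from subsequential-limit identification via uniqueness, whereas the paper proves continuity directly and treats compactness afterwards (using weak* convergence of $w_n$); this is a minor reorganization and both routes are equally valid.
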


\begin{proof}
We first prove that $K$ is continuous. Indeed, let $\{(\lambda_n,w_n)\}$ be a sequence in $\re\times L^\infty(\Omega)$ such that $(\lambda_n,w_n)\to (\lambda,w)$ for some $(\lambda,w)\in \re\times L^\infty(\Omega)$. Let us denote $u_n=K(\lambda_n,w_n)$, and let $B>0$ be such that $(\lambda_n^++1)w_n^+\leq B$. We know from Proposition \ref{exislambdaneg} that there exists $v\in H_0^1(\Omega)\cap L^\infty(\Omega)$ such that
\begin{equation*}
\begin{cases}
\dys-\Delta v+v=\mu(x)\frac{|\nabla v|^q}{v^\alpha}+f(x)+B\quad &\text{ in }\Omega,
\\
v>0 \quad &\text{ in }\Omega,
\\
v=0\quad &\text{ on }\partial\Omega.
\end{cases}
\end{equation*}
Hence, by virtue of Theorem \ref{comprinc1} (see also Remark \ref{lambda1remark1}), we deduce that 
\[u_n\leq v\leq \|v\|_{L^\infty(\Omega)}.\]
In particular, $\{u_n\}$ is bounded in $L^\infty(\Omega)$. 

\smallskip

Now we can argue as in \cite[Appendix]{CLLM} to prove that $\{u_n\}$ is, in fact, bounded in $C^{0,\eta}(\overline{\Omega})$ for some $\eta\in (0,1)$. Therefore, Arzel\`a-Ascoli theorem implies that $\{u_n\}$ admits a uniformly convergent subsequence. Say, up to a not relabeled subsequence, $u_n\to u$ uniformly in $\overline{\Omega}$ for some $u\in C(\overline{\Omega})$.

\smallskip

On the other hand, taking $u_n$ as test function in the weak formulation of \eqref{continuumproblem} yields
\[\int_\Omega|\nabla u_n|^2+\int_\Omega u_n^2=\int_\Omega\mu(x)|\nabla u_n|^q u_n^{1-\alpha}+\int_\Omega (f(x)+(\lambda_n^++1)w_n^+.\]
Using that $\{u_n\}$ and $\{(\lambda_n,w_n)\}$ are bounded in $L^\infty(\Omega)$ and in $\re\times L^\infty(\Omega)$, and also that $\alpha<q-1<1$, the previous equality clearly implies that $\{u_n\}$ is bounded in $H_0^1(\Omega)$. Then, $u\in H_0^1(\Omega)$ and, up to a new subsequence, $u_n\rightharpoonup u$ in $H_0^1(\Omega)$. Moreover, by \cite{BM}, $\nabla u_n\to \nabla u$ strongly in $L^q(\Omega)^N$. Furthermore, a lower local estimate on $\{u_n\}$ can be derived by comparison in the usual way. With all these estimates and convergences, the passing to the limit in \eqref{continuumproblem} is standard. 

\smallskip 

Therefore, $u\in H_0^1(\Omega)\cap L^\infty(\Omega)$ is the unique solution to \eqref{continuumproblem}. This means that $K(\lambda,w)=u$. Thus, we have proved that, up to a subsequence, $K(\lambda_n,w_n)\to K(\lambda,w)$ strongly in $L^\infty(\Omega)$. Actually, since $(\lambda,w)$ was fixed from the beginning, the whole sequence, and not just a subseqence, converges to $(\lambda,w)$. That is to say, $K$ is continuous.

\smallskip

It is left to prove that $K$ maps bounded sets to relatively compact sets. In other words, that for every sequence $\{(\lambda_n,w_n)\}$ bounded in $\re\times L^\infty(\Omega)$, there exists $(\lambda,w)\in \re\times L^\infty(\Omega)$ such that, up to a subsequence, $K(\lambda_n,w_n)\to K(\lambda,w)$ strongly in $L^\infty(\Omega)$. Indeed, it is well-known that, up to a subsequence, $\lambda_n\to\lambda$ in $\re$ and $w_n\to w$ weakly* in $L^\infty(\Omega)$ for some $(\lambda,w)\in \re\times L^\infty(\Omega)$. This convergence is enough to pass to the limit in the term with $w_n$. In the rest of the  terms, we pass to limit arguing as above. Thus, up to a subsequence, $K(\lambda_n,w_n)\to K(\lambda,w)$, and the proof is finished. 
\end{proof}

Let us define $\Phi(\lambda,u)=u-K(\lambda,u)$, and
\[\Sigma=\{(\lambda,u)\in\re\times L^\infty(\Omega):\Phi(\lambda,u)=0\}.\]
For any $\lambda_0\in\re$ and any isolated solution $u_0\in L^\infty(\Omega)$ to the equation $\Phi(\lambda_0,u)=0$, the Leray-Schauder degree $\deg(\Phi(\lambda_0,\cdot),B_r(u_0),0)$ is well defined and is constant for $r>0$ small enough. Thus it is possible to define the so called \emph{index} as
\[i(\Phi(\lambda_0,\cdot),u_0)=\lim_{r\to 0}\deg(\Phi(\lambda_0,\cdot),B_r(u_0),0).\]

\begin{proposition}\label{prop:continuum}
Assume that \eqref{H1} holds, and suppose also that $(P_0)$ has a solution $u_0$. Then, there exist two unbounded connected sets $\Sigma^-,\Sigma^+\subset\Sigma$ such that $\Sigma^-\subset(-\infty,0]\times L^\infty(\Omega)$, $\Sigma^+\subset~[0,\infty)\times L^\infty(\Omega)$ and $(0,u_0)\in\Sigma^-\cap\Sigma^+$. 
\end{proposition}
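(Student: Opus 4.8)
The plan is to run a Leray--Schauder global continuation argument of the type in \cite{ACJT1}, built on the completely continuous operator $K$ of Proposition \ref{Kcompact}. I begin with two elementary observations on $\Sigma=\Phi^{-1}(0)$. Since $(\lambda^++1)=1$ for every $\lambda\le0$, the map $K(\lambda,\cdot)$ is the same operator for all $\lambda\le0$; a fixed point $u=K(\lambda,u)$ with $\lambda\le0$ is, because $u>0$ forces $u^+=u$, exactly a solution of $(P_0)$, which by Theorem \ref{comprinc1} (together with the regularity in Remark \ref{lambda1remark1}) is unique and equal to $u_0$. Hence $\Sigma\cap((-\infty,0]\times L^\infty(\Omega))=(-\infty,0]\times\{u_0\}$, and this set is already the desired $\Sigma^-$: connected, unbounded, contained in $(-\infty,0]\times L^\infty(\Omega)$, and containing $(0,u_0)$. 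Symmetrically, for $\lambda\ge0$ the zeros of $\Phi(\lambda,\cdot)$ are precisely the solutions of \eqref{problem}, so by Remark \ref{lambda1remark1} there are none for $\lambda\ge\lambda_1$; and the complete continuity of $K$ makes bounded closed subsets of $\Sigma$ compact.

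Next I compute $\deg(\Phi(0,\cdot),B_R(0),0)=1$ for all large $R$, equivalently $i(\Phi(0,\cdot),u_0)=1$ (legitimate since $u_0$ is the only zero of $\Phi(0,\cdot)$). Consider the admissible compact homotopy $H_t(u)=u-K(0,tu)$, $t\in[0,1]$. A zero of $H_t$ satisfies $-\Delta u+(1-t)u=\mu(x)|\nabla u|^q/u^\alpha+f(x)$ with $u>0$; dropping the nonnegative term $(1-t)u$ shows $u$ is a subsolution of $(P_0)$ in the sense of \eqref{compsub}, so Theorem \ref{comprinc1} gives $u\le u_0$, a bound independent of $t$. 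Therefore $H_t\neq0$ on $\partial B_R(0)$ whenever $R>\|u_0\|_{L^\infty(\Omega)}+\|K(0,0)\|_{L^\infty(\Omega)}$, and homotopy invariance yields
\[
\deg(\Phi(0,\cdot),B_R(0),0)=\deg(H_1,B_R(0),0)=\deg(H_0,B_R(0),0)=\deg(\mathrm{Id}-K(0,0),B_R(0),0)=1 .
\]
Now the continuation. Let $\Sigma^+$ be the connected component of $\Sigma\cap([0,\infty)\times L^\infty(\Omega))$ that contains $(0,u_0)$; I claim it is unbounded. If not, $\Sigma^+$ is compact and, since $\Sigma$ has no points with $\lambda\ge\lambda_1$, we have $\lambda_*:=\sup\{\lambda:(\lambda,u)\in\Sigma^+\}<\lambda_1$. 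A standard separation lemma for compact sets then produces a bounded open set $\mathcal{O}\subset[0,\infty)\times L^\infty(\Omega)$ with $\Sigma^+\subset\mathcal{O}$, $\mathcal{O}\subset[0,\lambda_*+\varepsilon)\times L^\infty(\Omega)$ for some $\varepsilon>0$ with $\lambda_*+\varepsilon<\lambda_1$, and $\Phi\neq0$ on the relative boundary of $\mathcal{O}$ in $[0,\infty)\times L^\infty(\Omega)$; note that the slice $\mathcal{O}_0:=\{u:(0,u)\in\mathcal{O}\}$ is a neighbourhood of $u_0$ containing no other zero of $\Phi(0,\cdot)$ (there are none). Applying the generalized homotopy invariance of the Leray--Schauder degree to $\Phi$ on $\mathcal{O}$ over $\lambda\in[0,\lambda_*+\varepsilon]$, and using that the top slice $\mathcal{O}_{\lambda_*+\varepsilon}$ is empty, we obtain
\[
1=i(\Phi(0,\cdot),u_0)=\deg(\Phi(0,\cdot),\mathcal{O}_0,0)=\deg(\Phi(\lambda_*+\varepsilon,\cdot),\mathcal{O}_{\lambda_*+\varepsilon},0)=0 ,
\]
a contradiction. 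Hence $\Sigma^+$ is unbounded, and $(0,u_0)\in\Sigma^-\cap\Sigma^+$ as required.

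The identification of $\Sigma^-$ and the degree computation are routine consequences of the comparison principle of Section \ref{sec:comparison}. The main obstacle is the last step: one must choose the isolating open set $\mathcal{O}$ carefully so that (i) its relative boundary misses $\Sigma$, which uses the compactness of $\Sigma^+$ and of bounded closed pieces of $\Sigma$; (ii) it does not reach $\lambda_1$; and (iii) its $\lambda=0$ slice is a genuine isolating neighbourhood of $u_0$, so that the degree at $\lambda=0$ equals $i(\Phi(0,\cdot),u_0)=1$. The degree jump from $1$ to $0$ then forces the blow-up of solutions as $\lambda\to0^+$, i.e.\ the unboundedness of $\Sigma^+$.
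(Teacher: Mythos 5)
Your proof is correct, and it takes a genuinely different (and in two places cleaner) route than the paper's. The paper proves $i(\Phi(0,\cdot),u_0)\neq0$ via the homotopy that dials down the coefficient of the singular gradient term, $(1-t)\mu$, landing at $t=1$ on the operator $w\mapsto$ (solution of $-\Delta u+u=f+w^+$), and asserts without further argument that its degree is nonzero; it then cites the abstract continuation theorem \cite[Theorem 2.2]{ACJT1} (essentially Rabinowitz) to produce both $\Sigma^-$ and $\Sigma^+$ at once. You instead use the homotopy $H_t(u)=u-K(0,tu)$, which is admissible for exactly the same reason (the bound $u\leq u_0$ from Theorem \ref{comprinc1}) but lands at $t=0$ on the constant map $\mathrm{Id}-K(0,0)$, whose degree on a large ball is literally $1$; this pins down the index instead of merely its nonvanishing and avoids the unexplained step in the paper. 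You also observe directly that $\lambda^+=0$ for $\lambda\leq0$, so $\Phi(\lambda,\cdot)=\Phi(0,\cdot)$ there and $\Sigma\cap((-\infty,0]\times L^\infty(\Omega))=(-\infty,0]\times\{u_0\}$ by the uniqueness of the solution to $(P_0)$; taking this half-line as $\Sigma^-$ is simpler than invoking the theorem for the left branch. For the right branch you spell out the Rabinowitz argument (isolating compact neighbourhood via the separation lemma for the compact set $\Sigma\cap\overline{B}\cap([0,\infty)\times L^\infty(\Omega))$, then generalized homotopy invariance of the degree up to $\lambda_*+\varepsilon<\lambda_1$, contradiction with the empty top slice), whereas the paper cites it as a black box. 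One small remark: the radius bound $R>\|u_0\|_{L^\infty(\Omega)}+\|K(0,0)\|_{L^\infty(\Omega)}$ is wasteful but harmless, since the a priori bound $\|u\|_{L^\infty(\Omega)}\leq\|u_0\|_{L^\infty(\Omega)}$ for zeros of $H_t$ already makes $R>\|u_0\|_{L^\infty(\Omega)}$ admissible (and $\|K(0,0)\|_{L^\infty(\Omega)}\leq\|u_0\|_{L^\infty(\Omega)}$ by comparison anyway). The only place where a reader might want one more sentence is the ``standard separation lemma'' step, which is Whyburn's lemma applied to the compact set $\Sigma\cap\overline{B}\cap([0,\infty)\times L^\infty(\Omega))$ for a sufficiently large bounded $B$; you acknowledge this implicitly but do not name it.
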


\begin{remark}\label{rem:continuum}
Observe that, if $\lambda\geq 0$, solving the equation $\Phi(\lambda,u)=0$ is equivalent to finding a solution to \eqref{problem}. In particular, the projection of $\Sigma^+$ onto $L^\infty(\Omega)$ is actually made of solutions to \eqref{problem}.
\end{remark}

\begin{proof}[Proof of Proposition \ref{prop:continuum}]
By virtue of Proposition \ref{Kcompact}, $K$ is completely continuous. Moreover, since $(P_0)$ admits at most one solution (by virtue of \cite{ACJT2}), then $u_0$ is the unique solution to $\Phi(0,u)=0$ (see Remark \ref{rem:continuum}). In particular, it is isolated. We will prove now that $i(\Phi(0,\cdot),u_0)\not =0$ by using the properties of the Leray-Schauder degree. 

\smallskip

Indeed, let $T:[0,1]\times L^\infty(\Omega)\to L^\infty(\Omega)$ be defined as $T(t,w)=u$, where $u\in H_0^1(\Omega)\cap L^\infty(\Omega)$ is the unique solution to the problem
\begin{equation*}
\begin{cases}
\dys-\Delta u+u=(1-t)\mu(x)\frac{|\nabla u|^q}{u^\alpha}+f(x)+w^+\quad &\text{ in }\Omega,
\\
u>0 \quad &\text{ in }\Omega,
\\
u=0\quad &\text{ on }\partial\Omega.
\end{cases}
\end{equation*}
It is easy to prove that $T$ is continuous and $T(t,\cdot):L^\infty(\Omega)\to L^\infty(\Omega)$ is completely continuous arguing as in the proof of Proposition \ref{Kcompact}. Moreover, for any $t\in [0,1]$, the unique solution $u_t\in H_0^1(\Omega)\cap L^\infty(\Omega)$ to $T(t,u_t)=u_t$ satisfies, thanks to Theorem \ref{comprinc1} (see also Remark \ref{lambda1remark1}), that $u_t\leq u_0\leq \|u_0\|_{L^\infty(\Omega)}$. Hence, if we set $\Psi_t(u)=u-T(t,u)$ and $R=2\|u_0\|_{L^\infty(\Omega)}$, we have that $\Psi_t(u)\not =0$ for every $t\in [0,1]$ and every $u\in\partial B_R(0)=\partial\{v\in L^\infty(\Omega):\|v\|_{L^\infty(\Omega)}<R\}$. Therefore, the homotopy property of the degree shows that
\[\deg(\Psi_0,B_R(0),0)=\deg(\Psi_1,B_R(0),0)\not=0.\]

On the other hand, let $r>0$ be small enough so that $B_r(u_0)\subset\subset B_R(0)$. Let us denote the following open, bounded and disjoint subsets of $B_R(0)$ as $A_1=B_r(u_0)$ and $A_2=B_R(0)\setminus \overline{B_r(u_0)}$. Since $u_0$ is unique, then $\Psi_0(u)\not=0$ for all $u\in \overline{B_R(0)}\setminus (A_1\cup A_2)=\partial B_R(0)\cup\partial B_r(u_0)$. Then, the additivity property of the degree implies that
\[\deg(\Psi_0,B_R(0),0)=\deg(\Psi_0,A_1,0)+\deg(\Psi_0,A_2,0).\] 
Now, again by the uniqueness of $u_0$, we have that $\Psi_0(u)\not=0$ for all $u\in A_2$. Thus the solution property of the degree says that $\deg(\Psi_0,A_2,0)=0$. That is to say,
\[\deg(\Psi_0,B_R(0),0)=\deg(\Psi_0,B_r(u_0),0).\] 
Putting all together, we have proved that 
\[i(\Phi(0,\cdot),u_0)=\deg(\Phi(0,\cdot),B_r(u_0),0)=\deg(\Psi_0,B_r(u_0),0)\not=0.\]

In conclusion, we can now apply \cite[Theorem 2.2]{ACJT1}, which is essentially \cite[Theorem 3.2]{Rabinowitz}, and the proof is finished.
\end{proof}

\end{document}